\documentclass{amsart}
\usepackage{commands}
\title{About a Minimal Model Program without flips}
\author{Alberto Chiecchio}
\date{}

\usepackage{overpic}

\begin{document}
\begin{abstract} We introduce a new vector space associated to a projective variety, the Weil N\'eron-Severi space, which we show is finitely generated and contains the usual N\'eron-Severi space as a subspace. We define the Nef cone of Weil divisor and the cone of Weil curves. We study these cones, and prove a new Cone theorem. We use this theorem to describe a Minimal Model Program without flips.
\end{abstract}

\maketitle

\tableofcontents

\section{Introduction}

In the regular Minimal Model Program, one naturally encounters non-$\Q$-Goren\-stein varieties -- i.e., varieties where no multiple of the canonical divisor $K_X$ is $\Q$-Cartier -- as targets of small contractions.  Since the entire Minimal Model Program is built on intersections with $K_X$, there is the need to introduce flips. However, this created major historical roadblocks. Until recently, it was not known that flips existed. Moreover, at the moment we only know that a specific sequence of flips terminates, and it is still conjectured that every sequence of flips terminates. One of the main motivations for defining a Minimal Model Program without flips is that such program would potentially avoid the issue of termination.

The first necessary tool to build such program is a good notion of positivity on non-$\Q$-factorial singularities, and this was done in \cite{wStefano2}. The second tool are a cone and a contraction theorem, and these are the main results of this paper. We also outline an algorithm for a Minimal Model Program without flips, whose outcome is the same as the regular MMP, i.e., models with $\Q$-factorial singularities and fibers of contractions that are log Fano. The key idea is that, instead of performing flips, we continue performing contractions and take a $\Q$-factorialization as last step -- i.e., a small, projective birational morphism whose source has $\Q$-factorial singularities. The next step is to prove termination of such program. I will discuss by the end of the introduction some of the issues. Moreover, the models we obtain are a priori different from the ones we obtain with the usual MMP, so it is also essential to understand what is the relation between these new models and the usual ones. Finally, we point out that several of the tools of this paper (e.g., the existence of $\Q$-factorializations) are a consequence of \cite{BCHM}. Thus this program is in no way alternative to the usual one, but it ought to be considered more as a parallel one.

\vspace{2ex}

To describe a program, we need to articulate the largest class of singularities where the program is expected to work, and show that we do not leave such class under contractions. Our program works on \emph{log terminal singularities}, in the sense of \cite{dFH}, over an algebraically closed field of characteristic $0$. This is essentially the same class as klt; indeed, $X$ has log terminal singularities if and only if there exists a divisor $\Delta$ such that $(X,\Delta)$ is a klt pair. In section \Sref{sect:lt}, we prove some generalizations of well known results. In some cases, the techniques used are the usual ones, but for some results, e.g. proposition \ref{prop:contr still lt}, we use the full power of positivity for reflexive sheaves.

In section \Sref{sect:NSW}, after introducing a notion of numerically trivial Weil divisors, we define the \emph{Weil N\'eron-Severi space $\NS(X)_W$} (definition \ref{df:NSW}) which is the vector space of Weil divisors, modulo the numerically trivial Weil divisors. One of the first things we prove is the finiteness of this space:

\begin{thm}[Theorem \ref{thm:NSWfg} and proposition \ref{prop:NS<NSW}] Let $X$ be a projective normal variety, defined over an algebraically closed field of characteristic $0$, with log terminal singularities. The space $\NS(X)_W$ is finite dimensional and naturally contains the regular N\'eron-Severi space as a subspace.
\end{thm}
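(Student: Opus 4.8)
The plan is to compare $X$ with a $\Q$-factorialization $f\colon X'\to X$, i.e. a small projective birational morphism with $X'$ normal and $\Q$-factorial; such an $f$ exists by \cite{BCHM}, and $X'$ again has log terminal singularities by the results of section \Sref{sect:lt} (in particular proposition \ref{prop:contr still lt}, or the same circle of ideas). Since $f$ is an isomorphism in codimension one, taking strict transforms gives an isomorphism between the groups of Weil divisors of $X$ and of $X'$; tensoring with $\R$ yields an isomorphism of $\R$-vector spaces $f^{-1}_*\colon \mathrm{WDiv}(X)_{\R}\xrightarrow{\ \sim\ }\mathrm{WDiv}(X')_{\R}$. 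Crucially, no discrepancy correction appears here precisely because $f$ is small: on a Cartier class this map is the ordinary pullback $f^{*}$, and on the associated reflexive sheaves it is the reflexive pullback $(f^{*}(-))^{\vee\vee}$, so the construction is $\R$-linear with no error terms. Since $X'$ is $\Q$-factorial, every $\R$-Weil divisor on $X'$ is $\R$-Cartier, so there is a natural surjection $\mathrm{WDiv}(X')_{\R}\twoheadrightarrow \NS(X')_{\R}$ onto the ordinary (finite dimensional) N\'eron--Severi space.

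Composing, we obtain a surjective linear map
\[
\psi\colon \mathrm{WDiv}(X)_{\R}\longrightarrow \NS(X')_{\R},\qquad \psi(D)=\big[f^{-1}_{*}D\big].
\]
The heart of the argument is the claim that $\ker\psi$ is exactly the subspace of numerically trivial Weil divisors on $X$. Granting this, $\psi$ descends to an isomorphism $\NS(X)_W\xrightarrow{\ \sim\ }\NS(X')_{\R}$, which is a finite dimensional vector space, proving theorem \ref{thm:NSWfg}. To establish the claim, one uses the definition of numerical triviality of Weil divisors introduced at the start of section \Sref{sect:NSW}: a Weil divisor $D$ is numerically trivial on $X$ if and only if $f^{-1}_{*}D\equiv 0$ on $X'$. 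Both directions are push--pull computations: every curve $C\subset X$ is the image of a curve $\widetilde C\subset X'$ (an irreducible component of $f^{-1}(C)$ dominating $C$), so $f^{-1}_*D\equiv 0$ forces the relevant intersection numbers on $X$ to vanish; conversely a general curve class on $X'$ pushes forward to a curve on $X$ disjoint from the (codimension $\ge 2$) exceptional locus, where $D$ is Cartier, so the reverse implication follows from the projection formula for the reflexive pullback. One also checks that the notion is independent of the chosen $\Q$-factorialization, since any two of them agree in codimension one over $X$ and are identified compatibly by strict transform. \textbf{I expect this matching --- reconciling the intrinsic definition of numerical triviality on $X$ with numerical triviality after passing to the $\Q$-factorialization --- to be the main technical point}; once it is in place the statement is pure linear algebra.

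For proposition \ref{prop:NS<NSW}, the map $\NS(X)\to\NS(X)_W$ induced by regarding a Cartier divisor as a Weil divisor is evidently well defined and linear, and under the identification above it is simply the pullback $f^{*}\colon\NS(X)\to\NS(X')_{\R}$. Since $f$ is birational, $f^{*}$ is injective: if $f^{*}L\equiv 0$ then for every curve $C\subset X$, lifting $C$ to $\widetilde C\subset X'$ and applying the projection formula gives $L\cdot C=0$, whence $L\equiv 0$ on $X$. Therefore $\NS(X)$ embeds as a subspace of $\NS(X)_W$, completing the proof.
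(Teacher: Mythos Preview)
Your overall strategy---pass to a $\Q$-factorialization $f\colon X'\to X$ and compare $\NS(X)_W$ with the ordinary $\NS(X')_\R$---is exactly the paper's. But the execution of the key step misfires, because you are working with the wrong notion of numerical triviality.

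In this paper a Weil divisor $D$ on $X$ is \emph{numerically trivial} by definition when it is both nef and anti-nef, where nefness is defined via asymptotic global generation of $\regsh_X(m(D+A))$ for every ample $\Q$-Cartier $A$ (Definition~2.13). There is no intersection pairing with curves available for Weil divisors here, so your push--pull argument (``every curve on $X$ lifts'', ``a general curve on $X'$ avoids the exceptional locus'') does not touch the actual claim. The paper's argument for the direction it needs is: if $f^{-1}_*D\equiv 0$ on $X'$, then for any ample $\Q$-Cartier $B$ on $X'$ the class $f^{-1}_*D+B$ is ample, hence by Lemma~\ref{lm:kleiman4.1} its pushforward $D+f_*B$ is nef on $X$; letting $B\to 0$ and using closedness of nefness gives $D$ nef, and by symmetry anti-nef. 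This is genuinely a positivity argument, not a projection-formula computation.

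Second, you overclaim. You assert that $\ker\psi$ equals the numerically trivial Weil divisors, yielding an \emph{isomorphism} $\NS(X)_W\cong\NS(X')_\R$. The paper only proves the containment $(f^{-1}_*)^{-1}(\mathrm{triv}\ X')\subseteq(\mathrm{triv}\ X)$, giving a \emph{surjection} $\NS(X')_\R\twoheadrightarrow\NS(X)_W$; this already forces finite dimension. Your attempted converse (``a general curve class on $X'$ pushes forward to a curve disjoint from the exceptional locus'') is exactly where the argument breaks: the curves \emph{contracted} by $f$ are not handled, and those are precisely the obstruction. Indeed the remark after Corollary~\ref{cor:surjectionNSW} exhibits small $f$ for which $f_*\colon\NS(X')_W\to\NS(X)_W$ has nontrivial kernel.

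For the inclusion $\NS(X)_\R\hookrightarrow\NS(X)_W$, the paper's argument is simpler than yours and does not use the $\Q$-factorialization at all: one just observes that for $\R$-Cartier divisors the Weil-nef notion agrees with the usual one, so $\ker\pi_C=\ker\pi\cap\CDiv(X)_\R$ and a diagram chase finishes. (Minor point: the result that $X'$ retains log terminal singularities under a small map is Lemma~\ref{lm:QCart still lt}, not Proposition~\ref{prop:contr still lt}, which goes in the other direction.)
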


Since we are working with Weil divisors, it is more natural to work with pushforwards than with pullbacks. In section \Sref{sect:NSW} we prove a rather technical result, upon which most of our work relies. Although fairly natural from the point of view of positivity, the complications lie in the fact that we are working with Weil divisors.

\begin{thm}[Theorem \ref{thm:nef<=>}]\label{thm:1.2} Let $X$ be a (normal) projective variety defined over an algebraically closed field of characteristic $0$ having log terminal singularities and let $f:Y\rightarrow X$ be a small, projective, birational morphism. A Weil $\R$-divisor $D$ on $Y$ is nef if and only if $D$ is $f$-nef and $f_*D$ is nef.
\end{thm}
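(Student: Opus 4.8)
The plan is to prove the two implications separately: the forward one is essentially formal, while the converse carries the real content and is where the positivity theory for reflexive sheaves of \cite{wStefano2} must enter.

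\emph{The forward implication.} Suppose $D$ is nef on $Y$. Every curve contracted by $f$ is in particular a curve on $Y$, so $D\cdot C\ge 0$ for all such $C$; that is, $D$ is $f$-nef. To see that $f_*D$ is nef, I would take an arbitrary irreducible curve $B\subseteq X$. Since $f$ is small, its exceptional locus $\mathrm{Exc}(f)$ has codimension $\ge 2$ in $X$, so $B\not\subseteq\mathrm{Exc}(f)$ and $B$ has a well-defined strict transform $\widetilde B\subseteq Y$ with $f_*\widetilde B=B$. I would then invoke the compatibility of the Weil intersection pairing with strict transforms under small birational morphisms — precisely the kind of statement assembled in Section~\Sref{sect:NSW} — to get $(f_*D)\cdot B=D\cdot\widetilde B\ge 0$. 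As this holds for every $B$ and a Weil $\R$-divisor is nef exactly when it is nonnegative on every class in the closed cone of Weil curves, $f_*D$ is nef.

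\emph{The converse.} Now assume $D$ is $f$-nef and $f_*D$ is nef, and take any irreducible curve $C\subseteq Y$; I must show $D\cdot C\ge 0$. If $f(C)$ is a point this is exactly the $f$-nefness of $D$, so I may assume $B:=f(C)$ is a curve and $C\to B$ is finite, of some degree $d\ge 1$. Away from $\mathrm{Exc}(f)$ the morphism $f$ is an isomorphism and $D$ agrees with $f_*D$; hence, over the open locus where $f$ is an isomorphism, the rank-one reflexive sheaf $\mathcal O_Y(mD)$ coincides with the pullback of $\mathcal O_X(mf_*D)$ for every $m>0$. Writing $[C]=d\,[\widetilde B]+\eta$, where $\widetilde B$ is the strict transform of $B$ and $\eta$ is the class of an $f$-contracted $1$-cycle (since $f_*([C]-d[\widetilde B])=0$), I would split $D\cdot C$ into the "horizontal" part $d\,(D\cdot\widetilde B)=d\,(f_*D)\cdot B\ge 0$ and the "vertical" part $D\cdot\eta$, concentrated over the finitely many points of $B$ lying below $\mathrm{Exc}(f)$; the latter is the piece one hopes to control with $f$-nefness.

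\emph{The main obstacle.} The difficulty is that $\eta$ need not be an \emph{effective} $f$-contracted cycle, so $f$-nefness of $D$ does not literally bound $D\cdot\eta$; equivalently, one cannot pull the Weil divisor $f_*D$ back along $f$ to reduce to the $\Q$-Cartier case, because $f_*D$ need not be $\Q$-Cartier. This is exactly where I expect to need the full strength of the positivity of reflexive sheaves. The strategy I would pursue is a perturbation argument: fix an $f$-ample $\Q$-Cartier divisor $H$ on $Y$, so that $D+\varepsilon H$ is $f$-ample for all $\varepsilon>0$, and an ample $\Q$-Cartier class $A$ on $X$; then, using that $f_*\mathcal O_Y(m(D+\varepsilon H))$ is the reflexive sheaf $\mathcal O_X(m(f_*D+\varepsilon f_*H))$ and a relative Kleiman/Nakai–Moishezon-type criterion for reflexive sheaves, argue that $D+\varepsilon H+f^*A$ is ample in the Weil sense, hence nef; finally let $\varepsilon\to 0$ and then let the auxiliary ample class tend to $0$ in $\NS(X)_W$, invoking closedness of the Weil nef cone to conclude that $D$ is nef. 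The technical heart of the argument — and the step I expect to be hardest — is making each of these reflexive-sheaf manipulations legitimate: the behaviour of $f_*$ on rank-one reflexive sheaves under a small morphism, the comparison of $\mathcal O_Y(mD)$ with the pullback of $\mathcal O_X(mf_*D)$ along curves that meet $\mathrm{Exc}(f)$, and the restriction of these sheaves to curves through the singular locus, all of which rest on \cite{wStefano2} and the preparatory results of Section~\Sref{sect:NSW}.
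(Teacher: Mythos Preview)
Your proposal rests on a fundamental misreading of the definitions. In this paper, nefness of a Weil $\R$-divisor is \emph{not} tested by intersecting with curves: there is no intersection number $D\cdot C$ available for $D$ a non-$\Q$-Cartier Weil divisor and $C$ a curve. Instead, $D$ is nef when $\regsh_Y(m(D+A))$ is globally generated for every ample $\Q$-Cartier $A$ and all $m$ sufficiently divisible (the definition recalled from \cite{wStefano2}). The cone $\NEbar(X)_W$ is then defined \emph{by duality} as $\Nef(X)_W^*$; its elements are ``virtual'' curves, not classes of geometric curves, so your sentence ``a Weil $\R$-divisor is nef exactly when it is nonnegative on every class in the closed cone of Weil curves'' is a tautology, not a usable criterion. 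Consequently both halves of your argument---checking $D\cdot C\ge 0$ on contracted curves for $f$-nefness, and on strict transforms for nefness of $f_*D$, and then the decomposition $[C]=d[\widetilde B]+\eta$ in the converse---have no content in this framework. Nothing in Section~\ref{sect:NSW} sets up an intersection pairing with actual curves; the surjection $\NEbar(X)_W\twoheadrightarrow\NEbar(X)_\R$ goes the wrong way for what you want.

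The paper's proof is entirely sheaf-theoretic. For the forward direction, nef $\Rightarrow$ $f$-nef is immediate from the definitions, and nefness of $f_*D$ is Corollary~\ref{cor:pushforwardnef}, proved by reducing to the ample $\Q$-Cartier case and running a Castelnuovo--Mumford regularity argument (Lemma~\ref{lm:kleiman4.1}). For the converse, fix an ample $\Q$-Cartier $A$ on $Y$ and an ample $\Q$-Cartier $H$ on $X$. The $f$-nefness of $D$ gives, by definition, a surjection $f^*f_*\regsh_Y(m(D+A+f^*H))\twoheadrightarrow\regsh_Y(m(D+A+f^*H))$ for $m$ sufficiently divisible. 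The key identity $f_*\regsh_Y(m(D+A+f^*H))\cong\regsh_X(m(f_*D+f_*A+H))$ (Lemma~\ref{lm:pushforwardsheaves} plus the projection formula), together with nefness of $f_*D$ and of $f_*A$ (Lemma~\ref{lm:kleiman4.1} again), makes the right-hand side globally generated on $X$. Pulling back and composing the surjections gives global generation of $\regsh_Y(m(D+A+f^*H))$ itself; hence $D+f^*H$ is nef for every such $H$, and closedness of nefness finishes. Your final ``perturbation'' paragraph is groping toward this, but you should discard the curve-theoretic scaffolding entirely and work directly with global generation of the reflexive sheaves.
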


Finally, we relate our notion of numerical triviality with the one of Fulton, \cite{fulton}.

In section \Sref{sect:cones} we define the Weil Nef cone $\Nef(X)_W$ (definition \ref{df:Nefcone}) and the cone of Weil curves $\NEbar(X)_W$, definition \ref{df:NEcone}. The former is naturally defined as the cone generated by the classes of nef Weil divisors in $\NS(X)_W$, while the latter is defined by duality. Thus, the elements of $\NEbar(X)_W$ are ``virtual'' curves. Since the usual N\'eron-Severi space is a subspace of $\NS(X)_W$, some faces of $\NEbar(X)_W$ might be defined by Cartier divisors. We call such faces \emph{rational} (definition \ref{df:rtlF}). In section \Sref{sect:cones} we also prove some basic results about the structure of this cones in relation with small birational morphisms, and we prove Kleiman's criterion for ampleness.

\begin{thm}[Theorem \ref{thm:kleimanWeil}] Let $X$ be a projective variety over an algebraically closed field of characteristic $0$ having log terminal singularities, and let $D$ be an $\R$-Weil divisor on $X$. Then $D$ is ample if and only if $\NEbar(X)_W\setminus\{0\}\subseteq D_{>0}$.
\end{thm}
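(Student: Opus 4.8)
The plan is to run Kleiman's classical argument, with the finite dimensionality of $\NS(X)_W$ and the description of nef Weil divisors via $\Q$-factorializations (Theorem~\ref{thm:1.2}) playing the role that projectivity of $X$ plays in the usual proof.

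For the implication ``$D$ ample $\Rightarrow$ $\NEbar(X)_W\setminus\{0\}\subseteq D_{>0}$'', I would first note that an ample $\R$-Weil divisor lies in the interior of $\Nef(X)_W$: by the openness of ampleness for reflexive sheaves (\cite{wStefano2}), if $D$ is ample then $D+tE$ is ample, hence nef, for every class $E\in\NS(X)_W$ and all small $|t|$, so $D$ is an interior point of the closed convex cone $\Nef(X)_W$. Since $\NEbar(X)_W$ is by definition the dual cone of $\Nef(X)_W$, a separation argument finishes: if some $\gamma\in\NEbar(X)_W\setminus\{0\}$ had $D\cdot\gamma=0$, then $(D\pm tE)\cdot\gamma\ge 0$ for all $E$ and small $t$ would force $E\cdot\gamma=0$ for all $E$, i.e.\ $\gamma=0$.

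For the converse, assume $\NEbar(X)_W\setminus\{0\}\subseteq D_{>0}$. Since the class of every curve lies in $\NEbar(X)_W$ (curves pair nonnegatively with every nef Weil divisor), $D$ has nonnegative intersection with every curve, so $D$ is nef, $D\in\Nef(X)_W$. Fix an ample Cartier divisor $H$ on $X$. The key step is a compactness argument: the functional $\gamma\mapsto D\cdot\gamma$ is strictly positive on the closed cone $\NEbar(X)_W$ away from the origin, so the slice $\Sigma=\{\gamma\in\NEbar(X)_W : D\cdot\gamma=1\}$ is compact; there the continuous function $\gamma\mapsto H\cdot\gamma$ is bounded above, say by $M$, whence $D\cdot\gamma\ge M^{-1}(H\cdot\gamma)$ for all $\gamma\in\NEbar(X)_W$. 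Hence $N:=D-M^{-1}H$ is nonnegative on $\NEbar(X)_W$, i.e.\ $N\in\Nef(X)_W$, and $D=N+M^{-1}H$ with $N$ a nef $\R$-Weil divisor and $M^{-1}H$ an ample Cartier divisor. It remains to prove that such a sum is ample. For this I would pass to a $\Q$-factorialization $f:Y\to X$, which exists by \cite{BCHM} since $X$ is log terminal, and transport the question to $Y$, where $f^*H$ is an honest nef and big divisor: using Theorem~\ref{thm:1.2} together with the structural results of this section relating nef and ample Weil divisors to small birational morphisms, one combines $f^*H$ with the strict transform of $N$ into a divisor on $Y$ that is relatively ample over $X$ and whose pushforward is $D$, and then the characterization of ampleness of reflexive sheaves in \cite{wStefano2} gives that $D$ is ample on $X$. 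The point is that on the $\Q$-factorial $Y$ one has the classical Nakai--Moishezon and Kleiman theorems available.

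I expect the genuine obstacle to be exactly this last step: unlike the Cartier situation there is no Nakai--Moishezon criterion directly on $X$, so the statement ``nef Weil $+$ ample Cartier $=$ ample Weil'' cannot be read off from $X$ alone and must be obtained on a $\Q$-factorialization, which forces one to control the behaviour of positivity -- and, crucially, of relative positivity over $X$ -- under the small morphism $f$. This is where Theorem~\ref{thm:1.2} and the structural lemmas preceding the present theorem do the real work; the remaining ingredients (finite dimensionality of $\NS(X)_W$, closedness of $\NEbar(X)_W$, the elementary convex geometry of dual cones) are standard.
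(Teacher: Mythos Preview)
Your overall architecture --- compactness on a slice of $\NEbar(X)_W$ to get $D-\e H$ nef, then ``nef Weil $+$ ample Cartier $=$ ample Weil'' --- is exactly the paper's, but two steps are off.

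First, the sentence ``since the class of every curve lies in $\NEbar(X)_W$ \dots\ so $D$ is nef'' is misplaced. In this paper nefness of a Weil divisor is \emph{not} defined by intersection with curves, and curves do not naturally give classes in $N_1(X)_W$ (there is only a surjection $N_1(X)_W\twoheadrightarrow N_1(X)_\R$, not a section). The correct argument is the one-line double-duality step the paper records as step~(1): since $\NS(X)_W$ is finite dimensional, $\Nef(X)_W=\NEbar(X)_W^*$, so positivity of $D$ on $\NEbar(X)_W$ already gives $D\in\Nef(X)_W$.

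Second, and more importantly, the detour through a $\Q$-factorialization in your last paragraph is both unnecessary and, as sketched, not correct. The paper has already proved, as Lemma~\ref{lm:amplecone=interiornefcone}, that in the log terminal setting $D$ is nef if and only if $D+\e H$ is ample for every ample $H$ and all small $\e>0$; equivalently, $\Amp(X)_W=\Int(\Nef(X)_W)$. Once your compactness argument gives $N:=D-M^{-1}H\in\Nef(X)_W$, pure convex geometry yields $D=N+M^{-1}H\in\Nef(X)_W+\Int(\Nef(X)_W)\subseteq\Int(\Nef(X)_W)=\Amp(X)_W$, and you are done. This is exactly the paper's step~(2). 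Your proposed alternative --- pass to a $\Q$-factorialization $f:Y\to X$ and ``combine $f^*H$ with the strict transform of $N$ into a divisor on $Y$ that is relatively ample over $X$'' --- does not work as stated: $f^*H$ is $f$-trivial, and for a generic $\Q$-factorialization there is no reason for $f^{-1}_*N$ to be $f$-ample (e.g.\ if $N$ is already $\Q$-Cartier, $f^{-1}_*N=f^*N$ is $f$-trivial). You could repair this by taking $f$ to be the $\Q$-Cartierization of $N$ and then running classical Kleiman on $Y$ together with the decomposition $\NEbar(Y)=\NEbar(Y/X)+\NEbar(X)_W$ of Lemma~\ref{lm:NE=sum}, but that is a much longer route to a conclusion the paper already has in hand.
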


In section \Sref{sect:conethm}, after proving a global generation theorem, theorem \ref{thm:globgen}, we finally prove the Cone theorem.

{\renewcommand{\labelenumi}{(\arabic{enumi})}
\begin{thm}[Cone theorem, Theorem \ref{thm:cone}]  Let $X$ be a normal projective variety over an algebraically closed field of characteristic $0$, with log terminal singularities.
\begin{enumerate}
\item There are (countably many) $C_j\in N_1(X)_W$ such that
$$
\NEbar(X)_W=\NEbar(X)_{W,K_X\geq0}+\sum\R_{\geq0}\cdot C_j,
$$
and they do not accumulate on the half-space $K_{X<0}$.

\item (Contraction theorem) Let $F\subset\NEbar(X)_W$ be a $K_X$-negative extremal face. There exists a diagram
\begin{equation}
\xymatrix{& \Xtilde\ar[dr]^{\varphitilde_F} \ar[dl]_f & \\ X \ar@{-->}[rr]_{\varphi_F} & & Y,}
\end{equation}
where 
\begin{enumerate}
\item $f$ is small, projective, birational and $\rho(\Xtilde)\leq\rho(X)+1$;
\item under the inclusion $\NEbar(X)_W\hookrightarrow\NEbar(\Xtilde)_W$, $F$ is a rational $K_{\Xtilde}$-negative extremal face and $\varphitilde_F$ is a contraction of the face $F$;
\item $\Xtilde$ has log terminal singularities and $\varphitilde_{F *}\regsh_{\Xtilde}=\regsh_Y$.
\end{enumerate}
\end{enumerate}
\end{thm}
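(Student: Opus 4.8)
\medskip
\noindent\textbf{Proof proposal.}
The plan is to deduce both parts from the classical Cone and Contraction theorems on a $\Q$-factorial model, with Theorem \ref{thm:nef<=>} as the bridge between positivity upstairs and downstairs. Fix a $\Q$-factorialization $f\colon\Xtilde\to X$ --- a small projective birational morphism with $\Xtilde$ $\Q$-factorial --- which exists by \cite{BCHM}; it still has log terminal singularities since $f$ is small. Because $\Xtilde$ is $\Q$-factorial, $\NS(\Xtilde)_W=N^1(\Xtilde)_{\R}$ and $\NEbar(\Xtilde)_W=\NEbar(\Xtilde)$, so the classical theory applies on $\Xtilde$. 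Pushforward of divisor classes gives a linear surjection $f_*\colon N^1(\Xtilde)_{\R}\to\NS(X)_W$ (well defined since a nef and anti-nef Weil divisor is numerically trivial, surjective since $f$ is an isomorphism in codimension one); its transpose is the embedding $\iota\colon N_1(X)_W\hookrightarrow N_1(\Xtilde)$, i.e.\ the inclusion $\NEbar(X)_W\hookrightarrow\NEbar(\Xtilde)_W$ of the statement. Theorem \ref{thm:nef<=>} is exactly what identifies the image,
\[
\iota\bigl(\NEbar(X)_W\bigr)=\NEbar(\Xtilde)\cap\iota\bigl(N_1(X)_W\bigr)\qquad\text{(equivalently }f_*\bigl(\Nef(\Xtilde)\bigr)=\Nef(X)_W\text{),}
\]
and $f_*K_{\Xtilde}=K_X$ as Weil divisors gives $K_X\cdot\gamma=K_{\Xtilde}\cdot\iota(\gamma)$ for every $\gamma\in N_1(X)_W$.

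For part (1), apply the classical Cone theorem to the $\Q$-factorial klt variety $\Xtilde$ (reducing to a klt pair $(\Xtilde,\Delta)$ and letting $\Delta$ shrink to $0$, which recovers all $K_{\Xtilde}$-negative rays): $\NEbar(\Xtilde)=\NEbar(\Xtilde)_{K_{\Xtilde}\geq0}+\sum_i\R_{\geq0}[\Gamma_i]$, with the rays $\R_{\geq0}[\Gamma_i]$ not accumulating in $\{K_{\Xtilde}<0\}$ and only finitely many of them in $\{K_{\Xtilde}+\varepsilon\tilde H<0\}$ for each ample $\tilde H$ and $\varepsilon>0$. Intersecting this picture with $\iota(N_1(X)_W)$, transporting it back by $\iota$, and using $K_X\cdot\gamma=K_{\Xtilde}\cdot\iota(\gamma)$ together with the finite-dimensionality of $N_1(X)_W$ (Theorem \ref{thm:NSWfg}), one reads off at most countably many extremal rays of $\NEbar(X)_W$ inside $\{K_X<0\}$; choosing generators $C_j$ and peeling them off slice by slice yields $\NEbar(X)_W=\NEbar(X)_{W,K_X\geq0}+\sum_j\R_{\geq0}C_j$, with no accumulation in $\{K_X<0\}$. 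Theorem \ref{thm:globgen} provides the base-point-free/rationality input needed if one prefers to argue intrinsically on $X$ rather than by slicing.

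For part (2), let $F\subseteq\NEbar(X)_W$ be a $K_X$-negative extremal face and pick a nef Weil $\R$-divisor $N$ with $N^{\perp}\cap\NEbar(X)_W=F$. Choose the $\Q$-factorialization $f\colon\Xtilde\to X$ compatibly with $F$: one wants the strict transform $\tilde N$ (automatically nef on $\Xtilde$ once $f$-nef, by Theorem \ref{thm:nef<=>}) to satisfy $\tilde N^{\perp}\cap\NEbar(\Xtilde)=\iota(F)$, which is arranged by running a suitable MMP over $X$ that exposes exactly the wall of $\Nef(X)_W$ cut out by $N$; it is here that one secures $\rho(\Xtilde)\leq\rho(X)+1$, only that single wall being created. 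Then $\iota(F)$ is an honest $K_{\Xtilde}$-negative extremal face of $\NEbar(\Xtilde)$ (negative because $K_X\cdot\gamma=K_{\Xtilde}\cdot\iota(\gamma)$), and rational since on $\Q$-factorial $\Xtilde$ it is supported by a $\Q$-Cartier divisor. The classical Contraction theorem on $\Xtilde$ --- with Theorem \ref{thm:globgen} descending the semiample system defining it --- yields $\varphitilde_F\colon\Xtilde\to Y$ with $\varphitilde_{F*}\regsh_{\Xtilde}=\regsh_Y$ contracting precisely $\iota(F)$; and $Y$ has log terminal singularities by proposition \ref{prop:contr still lt} (the one place where non-$\Q$-Gorenstein positivity is essential, as $\varphitilde_F$ need not be crepant and $Y$ need not be $\Q$-Gorenstein). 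Composing, $\varphi_F:=\varphitilde_F\circ f^{-1}$ completes the diagram.

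I expect the main obstacle to be the identity $f_*\bigl(\Nef(\Xtilde)\bigr)=\Nef(X)_W$: the inclusion $\supseteq$ is immediate from Theorem \ref{thm:nef<=>}, but the reverse --- that every nef Weil $\R$-divisor on $X$ is the pushforward of a genuinely nef, not merely $f$-nef, class on the $\Q$-factorialization --- seems to require a careful relative MMP over $X$; and closely tied to it is the compatible, minimal choice of $\Q$-factorialization in part (2) yielding $\rho(\Xtilde)\leq\rho(X)+1$. Once these are in place, together with the classical statements on $\Xtilde$, the rest (the no-accumulation in (1), the log terminality of $Y$ in (2)) is comparatively routine, resting respectively on the classical Cone theorem and on proposition \ref{prop:contr still lt}.
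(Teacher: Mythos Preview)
Your overall strategy---pass to a $\Q$-factorial model and invoke the classical theory---is the right instinct, and indeed is what the paper does. But the two specific choices you leave vague are exactly where the paper's argument has content, and your version has real gaps there.

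For part (1), the identity you flag as an obstacle, $f_*\bigl(\Nef(\Xtilde)\bigr)=\Nef(X)_W$, need not hold for an arbitrary $\Q$-factorialization: the strict transform of a nef Weil divisor on $X$ has no reason to be $f$-nef. The paper sidesteps this entirely by choosing a $\Q$-factorialization $f\colon Z\to X$ with $K_Z$ \emph{$f$-nef} (such exist by \cite[109]{kollarexercises}). Then $\NEbar(Z/X)\subseteq\{K_Z\ge 0\}$, and since $\NEbar(Z)=\NEbar(Z/X)+\NEbar(X)_W$ by \ref{lm:NE=sum}, every $K_Z$-negative extremal ray of $\NEbar(Z)$ is forced, by extremality, to lie in $\NEbar(X)_W$. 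The classical cone decomposition on $Z$ then transports directly, with no need for your problematic identity.

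For part (2), the paper does \emph{not} take a $\Q$-factorialization at all. Instead it argues intrinsically on $X$: by the convex-geometry argument of \cite[\S3.3, Steps 6--7]{komo} one finds a nef Weil divisor $D$ with $\NEbar(X)_W\cap D_{=0}=F$; then $bD-K_X$ is ample for $b\gg 0$ by Kleiman's criterion \ref{thm:kleimanWeil}, so $D$ is agg by the global generation theorem \ref{thm:globgen}. One then takes $\Xtilde=\Proj_X\Rscr(X,D)$, the $\Q$-Cartierization of the single divisor $D$. This automatically gives $\rho(\Xtilde)\le\rho(X)+1$ (the relative Picard number is at most one), makes $\Dtilde=f^{-1}_*D$ $\Q$-Cartier and semiample, and since $\Dtilde$ is $f$-ample one checks via \ref{lm:NE=sum} that $\NEbar(\Xtilde)_W\cap\Dtilde_{=0}=F$, so $F$ is rational and $\Dtilde$ induces the contraction $\varphitilde_F$. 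Your ``suitable MMP over $X$ that exposes exactly the wall'' is not needed; the $\Q$-Cartierization of $D$ does this for free. Finally, note that (2)(c) asserts log terminality of $\Xtilde$, not of $Y$; that follows from \ref{lm:QCart still lt}, and \ref{prop:contr still lt} is not invoked here.
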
}

The trade-off of working with Weil divisors is that we only obtain rational maps, which are not always morphisms. This is not a big sacrifice, since also the regular MMP uses rational maps. However, since the cone $\NEbar(X)_W$ naturally surjects onto the usual cone of curves $\NEbar(X)_\R$, corollary \ref{cor:NEW>NE}, we obtain more maps. This is another reason why such program is worth exploring. By having more maps, we have a deeper understanding of the structure of the varieties. For example, from the point of view of Mori contractions, all Fano cones are alike (a cone has Picard number $1$, so the only possible Fano contraction is the map to a point). The Weil cone of curves has a richer structure, as example \ref{ex:cone2} shows.

In section \Sref{sect:program}, we propose a Minimal Model Program without flips.

\vspace{2ex}

What is still missing is precisely termination. We have a good understanding of how the cones change under small maps; the main obstacle is that we do not know in general what happens when the maps are not small. In some sense, the same tools that handle flipping contractions do not work so easily when the contraction is not a flipping one. In particular, we would need a result like theorem \ref{thm:1.2} above for general birational projective morphisms (with connected fibers). Future hope is that we can obtain termination by studying the interplay of the two spaces, $\NS(X)_\R$ and $\NS(X)_W$. We point out that, since the outputs that we obtain are a priori different from the ones obtained by the usual MMP, remark \ref{rk:models}, there is no immediate relation between the termination of our program and the usual termination.

\vspace{2ex}

Almost all the results in this paper work in the relative setting. To make the exposition more clear, we mostly worked in the non-relative setting, and we put all the relative versions in the last section, \Sref{sect:rel}.

\subsection*{Aknowledgments} This work started during my visit at Princeton University, and I am extremely indebted with Zs. Patakfalvi and M. Fulger for our conversations on which classes of singularities seemed more suitable for such project. I would like to thank M. Fulger for also suggesting lemma \ref{lm:mihai}. I would like to thank L. E. Miller and S. Kov\'acs for supporting me and having faith in this project, for listening to my frustrations when some proofs were escaping me, and for many suggestions on how to improve the exposition.

\section{Of Weil divisors (and other sorrows)}\label{sect:sorrows}

\subsection{Weil divisors and valuations}\label{subsect:Weil}
All the definitions and results in this subsection are of \cite{dFH}, with the exception of \ref{lm:pushforwardsheaves}, which is well-known.

\vspace{2ex}

Let $X$ be a normal variety. A \emph{divisorial valuation} on $X$ is a discrete valuation of the function field $k(X)$ of $X$ of the form $\nu=q\val_F$ where $q\in\R_{>0}$ and $F$ is a prime divisor over $X$, that is, a prime divisor on some normal variety birational to $X$. Let $\nu$ be a discrete valuation. If $\Idlsh$ is a \emph{coherent fractional ideal} of $X$ -- that is, a finitely generated sub-$\regsh_X$-module of the constant field $k(X)$ of rational functions on $X$ --, we set
$$
\nu(\Idlsh):=\min\{\nu(f)\,|\,\textrm{$f\in\Idlsh(U)$, $U\cap c_X(\nu)\neq\emptyset$}\}.
$$
\begin{df} To a given fractional ideal $\Idlsh$ we can associate a Weil divisor, called the \emph{divisorial part} of $\Idlsh$, as
$$
\div(\Idlsh):=\sum_{E\subset X}\val_E(\Idlsh)E,
$$
where the sum runs over all the prime divisors on $X$; equivalently, $\div(\Idlsh)$ is such that
$$
\regsh_X(-\div(\Idlsh))=\Idlsh\dual\dual.
$$
\end{df}
\begin{df} Let $\nu$ be a divisorial valuation on $X$. The \emph{$\natural$-valuation} or \emph{natural valuation} along $\nu$ of a divisor $F$ on $X$ is
$$
\nu^\natural(F):=\nu(\regsh_X(-F)).
$$
\end{df}
De Fernex and Hacon show, \cite[2.8]{dFH}, that, for every divisor $D$ on $X$ and every $m\in\Z_{>0}$, $m\nu^\natural(D)\geq\nu^\natural(mD)$ and
$$
\inf_{m\geq1}\frac{\nu^\natural(mD)}{m}=\liminf_{m\rightarrow\infty}\frac{\nu^\natural(mD)}{m}=\lim_{m\rightarrow\infty}\frac{\nu^\natural(m!D)}{m!}\in\R.
$$
\begin{df} Let $D$ be a divisor on $X$ and $\nu$ a divisorial valuation. The \emph{valuation along $\nu$} of $D$ is defined to be the above limit
$$
\nu^*(D):=\lim_{m\rightarrow\infty}\frac{\nu^\natural(m!D)}{m!}.
$$
\end{df}
\begin{df} Let $f:Y\rightarrow X$ be a proper birational morphism from a normal variety $Y$. For any divisor $D$ on $X$, the \emph{$\natural$-pullback} of $D$ along $f$ is
\begin{eqnarray}\label{eq:naturalpullbackdef}
f^\natural D:=\div(\regsh_X(-D)\cdot\regsh_Y);
\end{eqnarray}
equivalently, $f^\natural D$ is the divisor on $Y$ such that
\begin{eqnarray}\label{eq:naturalpullbackchar}
\regsh_Y(-f^\natural D)=(\regsh_X(-D)\cdot\regsh_Y)\dual\dual.
\end{eqnarray}
\end{df}
\begin{df} We define the \emph{pullback} of $D$ along $f$ as
$$
f^*D:=\sum_{E\subset Y}\val_E^*(D)E,
$$
where the sum runs over all prime divisors on $Y$. Equivalently,
$$
f^*D=\liminf_m\frac{f^\natural(mD)}{m}\textrm{ coefficient-wise}.
$$
\end{df}
\begin{prop}[\textrm{\cite[2.4, 2.10]{dFH}}] Let $\nu$ be a divisorial valuation on $X$ and let $f:Y\rightarrow X$ be a birational morphism from a normal variety $Y$. Let $D$ be any divisor and let $C$ be any $\R$-Cartier divisor, with $t\in\R_{>0}$ such that $tC$ is Cartier.
\begin{enumerate}
\item The definitions of $\nu^*(C)$ and $f^*(C)$ given above coincides with the usual ones. More precisely,
$$
\nu^*(C)=\frac{1}{t}\nu^*(tC)\quad and \quad f^*(C)=\frac{1}{t}f^*(tC).
$$
Moreover,
$$
\nu^*(tC)=\nu^\natural(tC)\quad and\quad f^\natural(tC)=f^*(tC).
$$
\item The pullback is almost linear, in the sense that
\begin{eqnarray}\label{eq:pullbacklinear}
f^\natural(D+tC)=f^\natural(D)+f^*(tC)\quad and\quad f^*(D+C)=f^*(D)+f^*(C).
\end{eqnarray}
\end{enumerate}
\end{prop}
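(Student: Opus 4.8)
The plan is to reduce every assertion to the case where the divisor in question is Cartier, since then $\regsh_X(-L)$ is an invertible sheaf and all the $\natural$-constructions degenerate to the classical ones. So suppose first that $L$ is Cartier. Then $\regsh_X(-L)$ is locally generated by a single rational function $g$, a local equation for $L$, so for a divisorial valuation $\nu$ whose center meets the relevant chart one has $\nu^\natural(L)=\nu(\regsh_X(-L))=\nu(g)$, the classical valuation of $L$ along $\nu$; in particular $\nu^\natural(mL)=m\,\nu^\natural(L)$ for $m\geq 0$, the defining sequence $\nu^\natural(m!\,L)/m!$ is constant, and $\nu^*(L)=\nu^\natural(L)$. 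Applying this to every prime $E\subset Y$ gives $f^\natural L=f^*L$ coefficient-wise, and since $\regsh_X(-L)\cdot\regsh_Y$ is again locally principal (generated by the same $g$), $f^\natural L=\div(\regsh_X(-L)\cdot\regsh_Y)$ is the classical pullback of the Cartier divisor $L$. Now for part (1): picking $t$ with $tC$ Cartier (which we may take rational, the general $\R$-Cartier case following by $\R$-linearity from the integral case below), for every $n$ large enough that $n!\,C$ is a positive integral multiple of $tC$ we get $\nu^\natural(n!\,C)/n!=\tfrac1t\nu^\natural(tC)$, eventually constant, hence equal to $\nu^*(C)$ in the limit; so $\nu^*(C)=\tfrac1t\nu^\natural(tC)=\tfrac1t\nu^*(tC)$, and the Cartier case identifies these with the usual objects. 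The statement for $f^*$ follows coefficient-wise over the primes of $Y$.

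The technical core is the following product formula, from which almost-linearity drops out. For a coherent fractional ideal $\Idlsh$ on $X$ and a Cartier divisor $L$,
$$\div\bigl(\Idlsh\cdot\regsh_X(-L)\bigr)=\div(\Idlsh)+L,\qquad \div\bigl((\Idlsh\cdot\regsh_Y)\cdot(\regsh_X(-L)\cdot\regsh_Y)\bigr)=\div(\Idlsh\cdot\regsh_Y)+f^\natural L.$$
Both are checked valuation by valuation, using that a valuation sends a product of fractional ideals to the sum of the two values (the product ideal being generated by products of sections) together with the computation $\val_E^\natural(L)=\val_E(g)$ above. Equivalently, since $\regsh_X(-L)$ is invertible, $\regsh_X(-D)\otimes\regsh_X(-L)$ is reflexive and agrees in codimension one with $\regsh_X(-(D+L))$, hence equals it, and intersecting the locally principal factor $\regsh_X(-L)$ into the pullback ideal $\regsh_X(-D)\cdot\regsh_Y$ merely translates its divisorial part by $f^\natural L$. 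Taking $\Idlsh=\regsh_X(-D)$ and $L=tC$ in the second identity yields $f^\natural(D+tC)=f^\natural(D)+f^\natural(tC)=f^\natural(D)+f^*(tC)$, the $\natural$-form of almost-linearity, using $f^\natural(tC)=f^*(tC)$ from the Cartier case.

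Finally, for $f^*(D+C)=f^*(D)+f^*(C)$ I would run the previous identity along the factorial subsequence: for $n$ large, $n!\,C$ is Cartier, so $f^\natural(n!D+n!C)=f^\natural(n!D)+f^*(n!C)$; dividing by $n!$ and letting $n\to\infty$, the term $\tfrac1{n!}f^*(n!C)$ equals the constant $f^*(C)$ by part (1), while $\tfrac1{n!}f^\natural(n!D)\to f^*(D)$ by definition, the limit existing by \cite[2.8]{dFH}. This gives the claim, and the general $\R$-Cartier case of both identities follows from it and the first identity by $\R$-linearity in the Cartier components.

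I expect the only genuine obstacle to be the product formula of the middle paragraph: one must verify that multiplying in the invertible sheaf $\regsh_X(-L)$ --- both on $X$ and after it has been used to generate an $\regsh_Y$-submodule of $k(X)=k(Y)$ --- shifts the divisorial part by precisely $L$, respectively $f^\natural L$, with no correction term. This rests on the stability of reflexivity under tensoring with an invertible sheaf (so that no information is lost in the double dual) and on the fact that forming $(-)\cdot\regsh_Y$ commutes with multiplication by a locally principal ideal. Everything else is formal manipulation of the limit definitions and of discrete valuations.
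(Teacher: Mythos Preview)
The paper does not prove this proposition; it is quoted as \cite[2.4, 2.10]{dFH} and stated without proof. Your argument is correct and is essentially the one given in \cite{dFH}: reduce to the Cartier case where $\regsh_X(-L)$ is invertible so that the $\natural$-constructions coincide with the classical ones, use that tensoring with an invertible sheaf shifts the divisorial part additively (your ``product formula''), and pass to the limit along the factorial subsequence for the starred pullback. One small caveat: your reduction ``we may take $t$ rational'' is fine when $C$ is $\Q$-Cartier, but the statement as written allows $C$ to be $\R$-Cartier with a real $t$; you should make explicit that the $\R$-Cartier case is handled by writing $C$ as an $\R$-linear combination of Cartier divisors and using $\R$-linearity of the classical pullback, rather than by claiming $t$ can be chosen rational.
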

\begin{lm}[\textrm{\cite[2.7]{dFH}}]\label{lm:(fg)*-g*f*} Let $f:Y\rightarrow X$ and $g:V\rightarrow Y$ be two birational morphisms of normal varieties, and let $D$ be a divisor on $X$. The divisor $(fg)^\natural(D)-g^\natural f^\natural(D)$ is effective and $g$-exceptional. Moreover, if $\regsh_X(-D)\cdot\regsh_Y$ is an invertible sheaf, $(fg)^\natural(D)=g^\natural f^\natural(D)$.
\end{lm}

We conclude with a very useful lemma regarding the behavior of divisorial sheaves along small birational morphisms.

\begin{lm}\label{lm:pushforwardsheaves} Let $f:Y\rightarrow X$ be small, proper, birational morphism of normal varieties, and let $D$ be a Weil divisor on $X$. Then $f_*\regsh_Y(D)=\regsh_X(f_*D)$.
\end{lm}

\begin{proof} Since $f$ is small, it is an isomorphism outside a codimension $2$ locus, so we have 
$$
\xymatrix{U \ar@{^(->}[r]^i \ar[d]_g^{\cong} & Y \ar[d]^f \\ V \ar@{^(->}[r]_j &X,}
$$
where $U$ and $V$ are open and $\codim(Y\setminus U)\geq2$, $\codim(X\setminus V)\geq2$. Let $D_U$ be the restriction of $D$ to $U$, and similar for $(f_*D)_V$. As Weil divisors, $g_*(D_U)=(f_*D)_V$. Then
\begin{eqnarray*}
f_*\regsh_Y(D)&=&f_*i_*\regsh_U(D_U)\cong j_*g_*\regsh_U(D_U)\cong j_*\regsh_V(g_*(D_U))=j_*\regsh_V((f_*D)_V)=\\
&=&\regsh_Y(f_*D),
\end{eqnarray*}
since both $\regsh_Y(D)$ and $\regsh_X(f_*D)$ are reflexive. Since $f_*\regsh_Y(D)$ is torsion-free, there is a natural inclusion $f_*\regsh_Y(D)\subseteq\regsh_X(f_*D)$; thus $f_*\regsh_Y(D)=\regsh_X(f_*D)$.
\end{proof}

\subsection{Positivity for Weil divisors}

In this subsection we will follow \cite{wStefano2}, and we will prove some technical results that we will need later.

\vspace{2ex}

Let us start with a key remark.

\begin{rk}\label{rk:globgen vs bpf} Positivity for Cartier divisors is sometimes defined in terms of global generation or basepoint-freeness. However, for linear systems associated to Weil divisors, basepoint-freeness and global generation are not equivalent. For example, consider the case of an affine cone and a non-$\Q$-Cartier divisor on it. Since the variety is affine, every coherent sheaf is globally generated; on the other hand, this divisor (and all its powers) must go through the vertex of the cone. In this case, every basepoint-free Weil divisor is necessarily $\Q$-Cartier. When thinking of how to define positivity for Weil divisors, presented with the choice between basepoint-freeness and global generation, the latter is more natural from the point of view of singularity theory. Basepoint-freeness seems to be a much stronger requirement. This choice led more naturally to cohomological statements, see \cite[\S4]{wStefano2}, but it might be less natural when applied in the context of the MMP, see the discussion at the beginning of section \Sref{sect:conethm}.
\end{rk}

Let us recall the definition of relative global generation.

\begin{df} Let $f:X\rightarrow U$ be a projective morphism of schemes, and let $\Fscr$ be a coherent sheaf on $X$. We say that $\Fscr$ is \emph{relatively globally generated} if the natural map $f^*f_*\Fscr\rightarrow\Fscr$ is surjective.
\end{df}

\begin{rk} This condition is local on the base, that is, it can be checked on open affine subschemes of $U$, where it reduces to the usual global generation.

As typical application of this observation, we can see that the product of two relatively globally generated coherent sheaves is still relatively globally generated.
\end{rk}

From now on we will fix a projective morphism of quasi-projective normal varieties $X\rightarrow U$.

\begin{df}[\textrm{cfr. \cite[2.3]{wStefano2}}] Let $f:X\rightarrow U$ be a projective morphism of quasi-projective normal varieties. A Weil $\Q$-divisor $D$ on $X$ is \emph{relatively asymptotically globally generated}, in short \emph{relatively agg} or \emph{$f$-agg}, if, for every positive $m$ sufficiently divisible, $\regsh_X(mD)$ is relatively globally generated.
\end{df}

Generalizing \cite[4.1]{stefano2}, in \cite{wStefano2} we gave the following definition.

\begin{df}[\textrm{\cite[4.1]{stefano2}, \cite[2.4]{wStefano2}}] Let $f:X\rightarrow U$ be a projective morphism of quasi-projective normal varieties over a noetherian ring $k$. A Weil $\Q$-divisor $D$ on $X$ is \emph{relatively nef}, or \emph{$f$-nef}, if for every relatively ample $\Q$-Cartier divisor $A$, $\regsh_X(D+A)$ is $f$-agg.

If $U=\Spec k$ (so that $X$ is projective) we will simply say that $D$ is \emph{nef}.
\end{df}

\begin{rk}\label{rk:failadditivity} It is not clear whether this notion is additive in general. Let $D_1$ and $D_2$ be  two nef Weil divisors such that for all $m\geq1$, $\regsh_X(mD_1)\otimes\regsh_X(mD_2)\neq\regsh_X(mD_1+mD_2)$. For any ample Cartier divisor $A$ and for any $m\gg0$, the sheaves $\regsh_X(mA+mD_1)$ and $\regsh_X(mA+mD_2)$ are globally generated; thus so is their tensor product $\regsh_X(mA+mD_1)\otimes\regsh_X(mA+mD_2)$. However, by assumption we have $\regsh_X(mA+mD_1)\otimes\regsh_X(mA+mD_2)\neq\regsh_X(m2A+m(D_1+D_2))$, and the sheaf on the right is the reflexive hull of the sheaf on the left. Thus, it is not immediately possible to deduce the global generation of $\regsh_X(m2A+m(D_1+D_2))$.
\end{rk}

The following lemma was mentioned in \cite{wStefano2}, but the proof was left to the reader. We will write it here for completion. 

\begin{rk} Following \ref{nota:lt}, in the results in this section, `$X$ with log terminal singularities' can be read as `$(X,\Delta)$ is a klt pair, for some boundary $\Delta$', see \ref{thm:dFH7.2}.
\end{rk}

\begin{lm}\label{lm:nefnessadditive} Let $f:X\rightarrow Y$ be a projective morphism of normal varieties, and let $X$ be defined over an algebraically closed field of characteristic $0$ and with log terminal singularities; (relative) nefness is additive.
\end{lm}
\begin{proof} The statement is local on the base, so we can assume $Y$ is affine. Consequently, it is enough to show that nefness is additive on $X$. By \cite[92]{kollarexercises}, all the algebras $\Rscr(X,D)$ are finitely generated (for $\Q$-divisors). Let $D_1$ and $D_2$ be nef Weil divisors and let $A$ be any ample $\Q$-Cartier divisor. By definition, $A/2+D_1$ and $A/2+D_2$ are agg. Let $m_1$  be  an integer such that, for every positive $m$ divisible by $m_1$ (and by $2$), $\regsh_X(m(A/2+D_1))$ is globally generated. Let $m_2$ be similarly defined for $D_2$ and let $m_0=\mathrm{lcm}(m_1,m_2)$. Then, for each positive $m$ divisible by $m_0$, $\regsh_X(m(A/2+D_1))$ and $\regsh_X(m(A/2+D_2))$ are globally generated. Hence, so is their tensor product
$$
\regsh_X(m(A/2+D_1))\otimes\regsh_X(m(A/2+D_2)).
$$
This sheaf naturally surjects onto
$$
\regsh_X(m(A/2+D_1))\cdot\regsh_X(m(A/2+D_2)),
$$
which is therefore also globally generated. Since all the algebras of local sections are finitely generated, for $m$ sufficiently divisible
$$
\regsh_X(m(A/2+D_1))\cdot\regsh_X(m(A/2+D_2))\cong\regsh_X(m(A+D_1+D_2)).
$$
Thus $D_1+D_2$ is nef.
\end{proof}

\begin{df}[\textrm{\cite[2.14]{wStefano2}}] Let $f:X\rightarrow U$ be a projective morphism of quasi-projective normal varieties over a noetherian ring $k$. A Weil $\Q$-divisor $D$ on $X$ is called \emph{relatively almost ample}, or \emph{almost $f$-ample}, if, for every relatively ample $\Q$-Cartier divisor $A$ there exists a $b>0$ such that $bD-A$ is $f$-nef. It is called \emph{relatively ample}, or \emph{$f$-ample}, if, in addition, $\Rscr(X,D)$ is finitely generated.

If $U=\Spec k$, in the two cases above we will simply say \emph{almost ample} or \emph{ample}.
\end{df}

\begin{rk} If $X$ is projective over a field of characteristic $0$ with log terminal singularities, for all $\Q$-divisors, the algebras $\Rscr(X,D)$ are finitely generated, \cite[92]{kollarexercises}, thus the notions of almost ample and ample coincide. In general, almost ample is not equivalent to ample, as \cite[2.20]{wStefano2} shows.
\end{rk}

As in standard setting (\cite[1.4.10]{lazarsfeld}), we have the following result (the following version is a slight strengthening of \cite[3.2]{wStefano2}, but the proof is essentially the same).

\begin{lm}\label{lm:amplecone=interiornefcone} Let $f:X\rightarrow U$ be a projective morphism of normal quasi-projective varieties over a noetherian ring $k$. Let $D$ be a divisor on $X$ and let $H$ be an $f$-ample divisor. Let us assume that either $D$ or $H$ is $\Q$-Cartier. Then $D$ is $f$-nef if and only if, for all sufficiently small $\e\in\Q$, $0<\e\ll1$, $D+\e H$ is $f$-ample.

If $k$ is an algebraically closed field of characteristic $0$ and $X$ has log terminal singularities, then assumption of $D$ or $H$ being $\Q$-Cartier is not needed.
\end{lm}

\begin{proof} The first part of the statement is \cite[3.2]{wStefano2}. So let us assume that $k$ is an algebraically closed field of characteristic $0$ and that $X$ has log terminal singularities.

Let us assume that $D+\e H$ is $f$-ample for all $\e\in\Q$, $0<\e\ll1$. Let $A$ be an $f$-ample $\Q$-Cartier $\Q$-divisor. For rational sufficiently small positive $\e$, $A-\e H$ is $f$-ample. Since all the algebras of local sections are finitely generated, \cite[92]{kollarexercises}, for all positive $m$ sufficiently divisible,
$$
\regsh_X(m(D+A))=\regsh_X(m(D+\e H)+(A-\e H))\cong\regsh_X(m(D+\e H))\cdot\regsh_X(m(A-\e H)).
$$
Since both $D+\e H$ and $A-\e H$ are $f$-ample, they are $f$-agg, and thus so is $D+A$. By definition, $D$ is $f$-nef.

Conversely, let $D$ be $f$-nef. By substituting $H$ with $\e H$, we reduce to prove that $D+H$ is $f$-ample. Let $A$ be $\Q$-Cartier and $f$-ample. For $1\gg\delta>0$, $H-\delta A$ is $f$-ample; moreover $D+\delta A$ is $f$-agg. Again, using \cite[92]{kollarexercises}, it is not hard to see that $D+H=(D+\delta A)+(H-\delta A)$ is $f$-ample.
\end{proof}

Assuming that the Weil divisor can be restricted to the generic fiber of $X\rightarrow U$, it is also possible to define relative bigness and pseudo-effectivity. This will be an implicit assumption in the next two definitions. We point out that this is true in all the cases we are interested in.

\begin{df}[\textrm{\cite[2.16]{wStefano2}}] Let $f:X\rightarrow U$ be a projective morphism of quasi-projective normal varieties over a noetherian ring $k$. A Weil $\Q$-divisor $D$ is called \emph{relatively big}, or \emph{f-big}, if there exist an $f$-ample $\Q$-Cartier divisor $A$ and an effective Weil $\Q$-divisor $E$ such that $D\sim_{f,\Q} A+E$.

If $U=\Spec k$, we will simply say that $D$ is \emph{big}.
\end{df}

\begin{df}[\textrm{\cite[2.17]{wStefano2}}] Let $f:X\rightarrow U$ be a projective morphism of quasi-projective normal varieties over a noetherian ring $k$. A Weil $\Q$-divisor $D$ is called \emph{relatively pseudo-effective}, or \emph{$f$-pseff}, if for every $f$-ample $\Q$-Cartier divisor $A$, $D+A$ is $f$-big.

If $U=\Spec k$, we will simply say that $D$ is \emph{pseudo-effective}.
\end{df}

The main characterization of these notions uses \emph{$\Q$-Cartierizations}.

\begin{lm}[\textrm{\cite[6.2]{komo}}]\label{lm:themostusefullemmaintheworld} Let $X$ be a normal algebraic variety and let $B$ be a Weil divisor. The following are equivalent:
\begin{enumerate}
\item $\Rscr(X,B)$ is a finitely generated $\regsh_X$-algebra;
\item there exists a small, projective birational morphism $f:Y\rightarrow X$ such that $Y$ is normal, and $\bar{B}:=f^{-1}_*B$ is $\Q$-Cartier and $f$-ample.
\end{enumerate}
Moreover, $f:Y\rightarrow X$ is unique with these properties, namely $Y\cong\Proj_X\Rscr(X,D)$, and, for all $m\geq0$, $f_*\regsh_Y(m\bar{B})=\regsh_X(mB)$.
We call such a map the \emph{$\Q$-Cartierization} of $D$.
\end{lm}

\noindent Notice that, if $D$ on $X$ is $\Q$-Cartier, the $\Q$-Cartierization of $D$ is $\id:\Xtilde=X\rightarrow X$.

\begin{thm}[\textrm{\cite[3.3, 3.6, 3.10, 3.12]{wStefano2}}]\label{thm:QCart positivity} Let $X\rightarrow U$ be a projective morphism of normal projective varieties over an algebraically closed field $k$. Let $D$ be a Weil divisor on $X$ and let $g: Y\rightarrow X$ be the $\Q$-Cartierization of $D$ (over $U$), with $\Dtilde:=g^{-1}_*D$; then $D$ is nef/ample/big/pseff over $U$ if and only if so is $\Dtilde$.
\end{thm}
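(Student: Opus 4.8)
The plan is to reduce each of the four positivity notions for $D$ on $X$ to the corresponding notion for $\Dtilde$ on $Y$ by exploiting the two structural features of the $\Q$-Cartierization $g:Y\to X$ recorded in Lemma \ref{lm:themostusefullemmaintheworld}: first, that $g$ is small, projective, birational with $g_*\regsh_Y(m\bar B)=\regsh_X(mB)$ for all $m\ge 0$ (so taking $\natural$-pullbacks and pushing forward is essentially transparent for the divisors in question, by Lemma \ref{lm:pushforwardsheaves}); and second, that $\Dtilde$ is itself $g$-ample, so that $g$-relative positivity statements about $\Dtilde$ are automatic. The four cases are then handled in turn, each time moving along $g$ in whichever direction is convenient.

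For \textbf{nefness}: suppose $D$ is nef (over $U$). I want $\Dtilde$ nef over $U$, i.e. for every $g$-ample — equivalently $Y$-ample over $U$ — $\Q$-Cartier $\tilde A$, that $\Dtilde+\tilde A$ is $f$-agg (writing $f$ for the map $Y\to U$). The key point is that $\Dtilde=g^\natural D$ up to the fact that $g$ is small, so $g_*(\Dtilde+\tilde A)$ differs from $D$ by a $g$-ample divisor pushed down, and since $g$ is small one can run the argument from Lemma \ref{lm:amplecone=interiornefcone} after replacing $\tilde A$ by $\tilde A = g^*A' + (\text{$g$-ample correction})$ where $A'$ is ample on $X$ over $U$; here is where I would use that $\Dtilde$ is $g$-ample to absorb the exceptional-type correction, together with finite generation of the section algebras (Kollár's exercise \cite[92]{kollarexercises}, available since $X$ has log terminal singularities in the absolute case, though for the general statement one argues as in \cite{wStefano2}). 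Conversely, if $\Dtilde$ is nef over $U$, push forward: $D=g_*\Dtilde$ and for an ample $\Q$-Cartier $A$ on $X$, $g^*A$ is $g$-nef so $\Dtilde+g^*A$ is nef over $U$, hence $f$-agg, and $g_*\regsh_Y(m(\Dtilde+g^*A))=\regsh_X(m(D+A))$ by the projection-formula computation in Lemma \ref{lm:themostusefullemmaintheworld} and Lemma \ref{lm:pushforwardsheaves}; relative global generation descends along $g_*$ since $g$ is an isomorphism in codimension one and both sheaves are reflexive, giving $D+A$ is $f$-agg and so $D$ nef.

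For \textbf{ampleness, bigness, pseudo-effectivity}: ampleness now follows formally — $D$ almost ample means for every ample $\Q$-Cartier $A$ there is $b>0$ with $bD-A$ nef, and $g^*$ converts this (using almost-linearity of $\natural$-pullback, \eqref{eq:pullbacklinear}, and that $g$ is small so $g^{-1}_*(bD) = b\Dtilde$) into $b\Dtilde-g^*A$ being nef over $U$; then add a small multiple of the $g$-ample $\Dtilde$ to clear $g$-amplitude and conclude $\Dtilde$ almost ample, and finite generation of $\Rscr(Y,\Dtilde)=\Rscr(X,D)$ (it is the same algebra up to the small morphism) upgrades this to ample. The reverse direction pushes forward identically. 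Bigness is the easiest: $D\sim_{U,\Q} A+E$ with $A$ ample $\Q$-Cartier and $E\ge 0$ pulls back under $g$ to $\Dtilde\sim_{U,\Q} g^*A + g^\natural E + (\text{effective } g\text{-exceptional}=0\text{ since }g\text{ small})$, and $g^*A$ is $g$-nef and $U$-big-adjacent; a tiny perturbation by $\Dtilde$ makes the $\Q$-Cartier ample part genuinely $U$-ample, giving $\Dtilde$ big, and pushing forward reverses it. Pseudo-effectivity is then immediate from the definition via bigness of $D+A$ for all ample $A$, transported through $g$ exactly as bigness was.

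The \textbf{main obstacle} is the nef case, specifically the forward direction: descending/ascending relative global generation along a small birational $g$ when the divisors involved are merely reflexive, not $\Q$-Cartier, so that tensor products of the relevant sheaves need not be reflexive (cf. Remark \ref{rk:failadditivity}) and one cannot naively multiply global generation. The resolution is precisely the finite-generation trick of Lemma \ref{lm:nefnessadditive} — after tensoring the globally generated pieces $\regsh(m(\Dtilde+\tilde A))$ with $\regsh(m(\text{small ample}))$, the resulting product sheaf surjects onto the reflexive hull because the section algebras are finitely generated — combined with the fact that $\Dtilde$ being $g$-ample lets us arrange every auxiliary $g$-relative positivity for free. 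Once this is set up, all four equivalences are bookkeeping.
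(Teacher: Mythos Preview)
The paper does not actually prove this theorem; it is quoted from \cite{wStefano2}, so there is no in-paper argument to compare against. Your outline has the right architecture---exploit that $g$ is small and that $\Dtilde$ is $g$-ample, and transport each positivity notion across $g$---but there is a genuine gap in the reverse direction of the nef case.

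You write that ``$\Dtilde+g^*A$ is nef over $U$, hence $f$-agg''. This inference fails: nef does not imply asymptotically globally generated, even for $\Q$-Cartier divisors. What you actually need is that $\Dtilde+g^*A$ is \emph{ample} over $U$. That does hold, but requires an argument: it is $\Q$-Cartier, it is $g$-ample (since $\Dtilde$ is and $g^*A$ is $g$-trivial), and on a curve $C$ not contracted by $g$ one has $g^*A\cdot C=A\cdot g_*C>0$ while $\Dtilde\cdot C\ge 0$; Kleiman's criterion then applies. Even granting this, your next step---``relative global generation descends along $g_*$ since $g$ is an isomorphism in codimension one and both sheaves are reflexive''---is not a valid justification. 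The functor $g_*$ is only left exact, so surjectivity of $H^0\otimes\regsh_Y\to\mathscr{F}$ does not automatically survive to $H^0\otimes\regsh_X\to g_*\mathscr{F}$; reflexivity of the target tells you the two sheaves agree, not that the evaluation map is onto. The actual mechanism, and the one this very paper deploys later in the proof of Lemma~\ref{lm:kleiman4.1}, is Castelnuovo--Mumford regularity: for $L$ very ample on $X$ one shows $H^i\big(X,\regsh_X(m(D+A)-iL)\big)=0$ by passing to $Y$, using relative Serre vanishing (the sheaf is $g$-ample, so $R^jg_*$ vanishes for $j>0$ and $m$ divisible) to collapse the Leray spectral sequence, and then ordinary Serre vanishing on $Y$.

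Your forward direction is vaguer than it needs to be---``exceptional-type correction'' is meaningless for a small map, there being no exceptional divisors---but the clean version does work and is close to your intent: $D$ nef gives $\regsh_X(m(D+A))=g_*\regsh_Y(m(\Dtilde+g^*A))$ relatively globally generated over $U$; since $\Dtilde+g^*A$ is $g$-ample, $\regsh_Y(m(\Dtilde+g^*A))$ is relatively generated over $X$; composing the two surjections (after pulling back along $g$) shows $\regsh_Y(m(\Dtilde+g^*A))$ is generated over $U$, so $\Dtilde+g^*A$ is nef over $U$, and letting $A\to 0$ gives $\Dtilde$ nef. Once nefness is settled, the remaining three equivalences are indeed the bookkeeping you describe.
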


\section{Log terminal singularities}\label{sect:lt}

In this section we recall some definitions and results due to \cite{dFH} (we will use the notation of \cite{wStefano}). Subsequently, we prove generalizations of well-known results. 

\vspace{2ex}

If $f:Y\rightarrow X$ is a proper birational morphism (of normal varieties) and if we choose a canonical divisor $K_X$ on $X$, we will always assume that the canonical divisor $K_Y$ on $Y$ be chosen such that $f_*K_Y=K_X$ (as Weil divisors).

\begin{df}[\textrm{cfr. \cite[3.1]{wStefano}}] Let $f:Y\rightarrow X$ be a proper birational map of normal varieties. The \emph{$m$-limiting relative canonical $\Q$-divisors} are
\begin{eqnarray*}
&&K^-_{m,Y/X}:=K_Y-\frac{1}{m}f^\natural(mK_X),\quad K^-_{Y/X}:=K_Y-f^*(K_X)\\
&&K^+_{m,Y/X}:=K_Y+\frac{1}{m}f^\natural(-mK_X),\quad K^+_{Y/X}:=K_Y+f^*(-K_X).
\end{eqnarray*}
\end{df}
As shown by \cite{dFH} (and as from the definitions), for all $m,q\geq1$,
\begin{eqnarray}\label{eq:K-<=K^+}
K^-_{m,Y/X}\leq K^-_{qm,Y/X}\leq K^-_{Y/X}\leq K^+_{Y/X}\leq K^+_{mq,Y/X}\leq K^+_{m,Y/X}
\end{eqnarray}
and
\begin{eqnarray}\label{K-(Y/X)=limsupK-(m,Y/X)}
K^-_{Y/X}=\limsup K^-_{m,Y/X}, \quad K^+_{Y/X}=\liminf K^+_{m,Y/X}
\end{eqnarray}
(coefficient-wise).

\begin{df}[\textrm{cfr. \cite[4.1]{wStefano}}] Let $Y\rightarrow X$ be a proper birational morphism with $Y$ normal, and let $F$ be a prime divisor on $Y$. For each integer $m\geq1$, the \emph{$m$-limiting discrepancy} of $F$ with respect to $X$ is
$$
a_m(F,X):=\ord_F(K^-_{m,Y/X}).
$$
\end{df}

\begin{df}[\textrm{\cite[7.1]{dFH}}] A variety $X$ is said to have \emph{log terminal singularities} if there exists an integer $m_0$ such that $a_m(F,X)>-1$ for every prime divisor $F$ over $X$ and $m=m_0$ (and hence for any positive multiple $m$ of $m_0$).
\end{df}
\begin{thm}[\textrm{\cite[7.2]{dFH}}]\label{thm:dFH7.2} Let $X$ be a normal variety defined over an algebraically closed field of characteristic $0$. Then $X$ has log terminal singularities if and only if there exists a boundary $\Delta$ on $X$ such that $(X,\Delta)$ is klt.
\end{thm}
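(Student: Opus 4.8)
The plan is to translate between the $m$-limiting discrepancies $a_m(F,X)$ and the discrepancies of a klt pair. The bridge is the elementary fact that for an effective $\Q$-divisor $\Delta$ one has $\regsh_X(-m(K_X+\Delta))\subseteq\regsh_X(-mK_X)$, together with the observation that forming divisorial parts of $\natural$-pullbacks reverses inclusions of fractional ideals, so that the two notions of discrepancy can be compared on any birational model.

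\smallskip

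Assume first that $(X,\Delta)$ is klt for some boundary $\Delta$ and fix $m_0\in\Z_{>0}$ with $m_0(K_X+\Delta)$ Cartier. Since $\Delta\geq0$, the inclusion $\regsh_X(-m_0(K_X+\Delta))\subseteq\regsh_X(-m_0K_X)$ is preserved after multiplying by $\regsh_Y$ along any proper birational $f\colon Y\to X$, and because $\val_E$ of a smaller fractional ideal is larger we obtain $f^\natural(m_0K_X)\leq f^\natural(m_0(K_X+\Delta))$ coefficient-wise. As $m_0(K_X+\Delta)$ is Cartier the right-hand side equals $m_0f^*(K_X+\Delta)$, hence
$$
K^-_{m_0,Y/X}=K_Y-\tfrac1{m_0}f^\natural(m_0K_X)\ \geq\ K_Y-f^*(K_X+\Delta).
$$
The coefficients of $K_Y-f^*(K_X+\Delta)$ are the discrepancies $a(E,X,\Delta)$ at $f$-exceptional primes and the numbers $-\operatorname{mult}_E\Delta$ at the others; all exceed $-1$ because $(X,\Delta)$ is klt and $\Delta$ is a boundary. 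Therefore $a_{m_0}(F,X)=\ord_F(K^-_{m_0,Y/X})>-1$ for every prime divisor $F$ over $X$ (realised on a suitable $Y$), i.e.\ $X$ has log terminal singularities.

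\smallskip

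Conversely, assume $X$ has log terminal singularities and fix $m_0$ as in the definition, with $m_0\geq2$. Working over a quasi-projective neighbourhood if necessary, choose a Cartier ample $A$ with $\regsh_X(-m_0K_X+m_0A)=\regsh_X(-m_0K_X)\otimes\regsh_X(m_0A)$ globally generated, and let $\Gamma$ be a general member of the resulting base-point-free linear system. Put $\Delta:=\tfrac1{m_0}\Gamma$: then $\Delta$ is a boundary (a general member of a base-point-free system has no multiple component), and $m_0(K_X+\Delta)=m_0K_X+\Gamma$ is linearly equivalent to the Cartier divisor $m_0A$, hence Cartier, so $K_X+\Delta$ is $\Q$-Cartier. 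It remains to show that $(X,\Delta)$ is klt, which may be checked on a single log resolution $g\colon Z\to X$ of $(X,\Delta)$. Since $m_0K_X+\Gamma$ is Cartier, the discrepancy at a $g$-exceptional prime $E$ is $a(E,X,\Delta)=\ord_E(K_Z)-\tfrac1{m_0}\ord_E\!\big(g^\natural(m_0K_X+\Gamma)\big)$, while the components of $\Delta$ contribute coefficient $\tfrac1{m_0}<1$; so it is enough to prove that, for $\Gamma$ general, $g^\natural\Gamma$ has no $g$-exceptional component. Granting this and using sub-additivity of the $\natural$-pullback, $g^\natural(m_0K_X+\Gamma)\leq g^\natural(m_0K_X)+g^\natural\Gamma$ (which follows from $\regsh_X(-D_1-D_2)\supseteq\regsh_X(-D_1)\cdot\regsh_X(-D_2)$ and additivity of $\val_E$ on products of ideals), one gets at each $g$-exceptional $E$
$$
a(E,X,\Delta)\ \geq\ \ord_E(K_Z)-\tfrac1{m_0}\ord_E\!\big(g^\natural(m_0K_X)\big)=\ord_E(K^-_{m_0,Z/X})=a_{m_0}(E,X)>-1,
$$
so $(X,\Delta)$ is klt.

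\smallskip

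The main obstacle is precisely this genericity statement --- essentially the assertion that $(X,\tfrac1{m_0}\Gamma)$ is klt for a general member $\Gamma$ --- i.e.\ controlling the $g$-exceptional part of $g^\natural\Gamma$. One must show that a general $\Gamma$ is mild enough that its $\natural$-pullback to a suitable resolution acquires no exceptional component, or equivalently that $\regsh_X(-m_0K_X-\Gamma)=\regsh_X(-m_0K_X)\cdot\regsh_X(-\Gamma)$ with this product behaving well under pullback; the subtlety is entirely concentrated on how the general member $\Gamma$ meets $\operatorname{Sing}(X)$, where $\regsh_X(-m_0K_X)$ fails to be invertible. This is where the characteristic-zero hypothesis is genuinely used, both for the existence of the log resolution $g$ and for the Bertini-type genericity of $\Gamma$; the remainder of the argument is routine bookkeeping with $\natural$-pullbacks.
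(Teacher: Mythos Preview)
The paper does not give a proof of this statement; it is simply quoted from \cite[7.2]{dFH}. Your first direction is correct and is essentially the argument there. For the converse, the construction $\Delta=\tfrac{1}{m_0}\Gamma$ with $\Gamma$ general is the right idea, but the route you take to bound discrepancies does not work, and the obstacle is not quite where you place it.

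The claim you isolate --- that $g^\natural\Gamma$ has no $g$-exceptional component for general $\Gamma$ --- is not merely unproven but false. Since $m_0K_X+\Gamma$ is Cartier while $m_0K_X$ typically is not, $\Gamma$ fails to be Cartier precisely over the non-$\Q$-Gorenstein locus; an effective non-Cartier divisor must contain those points in its support, so $\regsh_X(-\Gamma)\cdot\regsh_Z$ vanishes along the exceptional divisors lying over them and $g^\natural\Gamma$ picks up exceptional components. Your sub-additivity bound $a(E,X,\Delta)\geq a_{m_0}(E,X)-\tfrac{1}{m_0}\ord_E(g^\natural\Gamma)$ is therefore too weak to conclude. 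The fix, as in \cite{dFH}, is to compute $g^*(m_0K_X+\Gamma)$ directly rather than split it: choose the log resolution $g$ so that $\regsh_X(-m_0K_X)\cdot\regsh_Z$ is already invertible, so that the pulled-back linear system on the smooth $Z$ is genuinely base-point-free; then for general $\Gamma$ one obtains the \emph{equality} $g^*(m_0K_X+\Gamma)=g^\natural(m_0K_X)+G$ with $G$ a general member of that system, hence (Bertini) smooth and free of exceptional components. This gives $a(E,X,\Delta)=a_{m_0}(E,X)>-1$ on the nose, with no loss. The moral is that the Cartier sum $m_0K_X+\Gamma$ pulls back cleanly even though neither summand does, so one should not separate them.
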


\begin{nota}\label{nota:lt} In this paper will always use the terminology `log terminal singularities' in the above sense, so we do not assume the variety to be $\Q$-Gorenstein.
\end{nota}

\begin{lm}\label{lm:QCart still lt} Let $X$ be a normal variety over an algebraically closed field of characteristic $0$ having log terminal singularities, and let $f:Y\rightarrow X$ be a small, proper, birational morphism. Then $Y$ still has log terminal singularities.
\end{lm}

\begin{proof} Let $h:W\rightarrow X$ be any birational morphism factoring through $f$, and let $g:W\rightarrow Y$ so that $h=g\circ f$. Since $X$ has log terminal singularities, there exists $m_0$ such that, for every $m$ divisible by $m_0$, $K_W-\frac{1}{m}h^\natural(mK_X)>-1$, meaning, all the valuations along prime exceptional divisors are strictly bigger than $-1$. Let $E_m^g=\frac{1}{m}\big(h^\natural(mK_X)-g^\natural f^\natural(mK_X)\big)$, which is effective and $g$-exceptional, by \ref{lm:(fg)*-g*f*}. Thus,
\begin{eqnarray*}
-1&<&K_W-\frac{1}{m}h^\natural(mK_X)=K_W-\frac{1}{m}g^\natural f^\natural(mK_X)-E^g_m\leq\\
&\leq&K_W-\frac{1}{m}g^\natural f^\natural(mK_X).
\end{eqnarray*}
Since $f$ is small, $f^\natural(mK_X)=mK_Y$, which implies that 
$$
-1<K_W-\frac{1}{m}g^\natural f^\natural(mK_X)=K_W-\frac{1}{m}g^\natural(mK_Y),
$$
which proves that $Y$ has log terminal singularities.
\end{proof}

\begin{rk} The technique above shows a well-known behavior: if $f:\Xtilde\rightarrow X$ is a small morphism, the singularities (meaning terminal, canonical, lt, lc) of $\Xtilde$ are no worse than the singularities of $X$.
\end{rk}

As a partial converse of \ref{lm:QCart still lt}, we have the following partial generalization of \cite[7.4]{kolhighI} or \cite[3.5]{fujino_kawamata}.

\begin{prop}\label{prop:contr still lt} Let $X$ be a normal variety over an algebraically closed field of characteristic $0$, having log terminal singularities, $f:X\rightarrow Y$ be a projective, birational morphism, with $-K_X$ $f$-agg, then $Y$ still has log terminal singularities.
\end{prop}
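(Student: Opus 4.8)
The plan is to reduce the statement, through a $\Q$-Cartierization followed by a Bertini argument, to the classical fact that a birational contraction of a klt pair with relatively nef log anticanonical class has klt target (\cite[7.4]{kolhighI}, \cite[3.5]{fujino_kawamata}). The one subtlety is that the classical statement needs the log canonical divisor \emph{of the target} to be $\Q$-Cartier, and $K_Y$ itself need not be, so I will have to produce a boundary making it $\Q$-trivial. Since having log terminal singularities is local on $Y$, I would first shrink $Y$ to an affine open, so that $X$ becomes quasi-projective and $f$-relative global generation becomes ordinary global generation.

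First I would reduce to the case where $K_X$ is $\Q$-Cartier. Because $X$ has log terminal singularities, $\Rscr(X,-K_X)$ is finitely generated \cite[92]{kollarexercises}, so $-K_X$ admits a $\Q$-Cartierization $h\colon Z\to X$ (Lemma \ref{lm:themostusefullemmaintheworld}): it is small, projective, birational, $Z$ is normal, and $-K_Z:=h^{-1}_*(-K_X)$ is $\Q$-Cartier and $h$-ample. By Lemma \ref{lm:QCart still lt} the variety $Z$ has log terminal singularities, hence, being $\Q$-Gorenstein, is klt in the usual sense. The essential point is that $-K_Z$ is not merely $(f\circ h)$-agg but genuinely $(f\circ h)$-semiample over $Y$. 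Choose $m>0$ large and divisible enough that $-mK_Z$ is Cartier, that $f^*f_*\regsh_X(-mK_X)\to\regsh_X(-mK_X)$ is surjective (possible since $-K_X$ is $f$-agg), and that $\regsh_Z(-mK_Z)$ is $h$-globally generated (possible since the $\Q$-Cartier divisor $-K_Z$ is $h$-ample). Using $h_*\regsh_Z(-mK_Z)=\regsh_X(-mK_X)$ (Lemma \ref{lm:themostusefullemmaintheworld}), pull the first surjection back by $h^*$ and compose it with the counit $h^*h_*\regsh_Z(-mK_Z)\to\regsh_Z(-mK_Z)$; this realizes the natural map $(f\circ h)^*(f\circ h)_*\regsh_Z(-mK_Z)\to\regsh_Z(-mK_Z)$ as surjective, so $m(-K_Z)$ is $(f\circ h)$-base-point-free. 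Replacing $(X,f)$ by $(Z,f\circ h)$, I may from now on assume $X$ is $\Q$-Gorenstein klt and $-K_X$ is $f$-semiample over $Y$.

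Next, over the affine base, $f$-semiampleness of the now $\Q$-Cartier divisor $-K_X$ says that $m(-K_X)$ is a base-point-free Cartier linear system for some divisible $m>0$; in particular $-K_X$ is $\Q$-effective, and for a general $D\in|m(-K_X)|$ the pair $(X,\frac1m D)$ is klt (standard Bertini: on a log resolution, the pullback of $D$ is smooth, transverse to the exceptional locus and contains no exceptional divisor, so the exceptional discrepancies are those of $(X,0)$, all $>-1$, while the new boundary component has coefficient $\frac1m<1$). Since $mK_X+D\sim 0$ we get $K_X+\frac1m D\sim_\Q 0$, hence also $K_Y+\frac1m f_*D=f_*(K_X+\frac1m D)\sim_\Q 0$; thus $\frac1m f_*D\geq 0$, $K_Y+\frac1m f_*D$ is $\Q$-Cartier, and $-(K_Y+\frac1m f_*D)$ is $f$-nef. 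Applying the classical theorem (\cite[7.4]{kolhighI}, \cite[3.5]{fujino_kawamata}) to $f\colon X\to Y$ with the klt pair $(X,\frac1m D)$ then gives that $(Y,\frac1m f_*D)$ is klt; since $\frac1m f_*D$ is a boundary, Theorem \ref{thm:dFH7.2} shows that $Y$ has log terminal singularities.

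I expect the first reduction to be the crux. The hypothesis provides only that $-K_X$ is $f$-agg, which is strictly weaker than $f$-nef, so a direct attack comparing $m$-limiting canonical divisors on a common resolution would collide with the familiar difficulty of controlling $\natural$-pullbacks of $K_Y$ along the non-small morphism $f$. Passing to the $\Q$-Cartierization of $-K_X$ and bootstrapping $f$-agg together with the $h$-ampleness of $-K_Z$ into honest $(f\circ h)$-semiampleness is precisely what sidesteps this; and the upgrade genuinely matters, since — by Remark \ref{rk:globgen vs bpf} — global generation of $\regsh_X(-mK_X)$ without base-point-freeness would not be enough for the Bertini step.
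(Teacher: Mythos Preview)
Your proof is correct, but it follows a genuinely different route from the paper's. The paper gives a direct sheaf-theoretic argument entirely inside the de~Fernex--Hacon framework: from the torsion-free inclusion $f_*\regsh_X(-mK_X)\hookrightarrow\regsh_Y(-mK_Y)$ and the $f$-agg surjection $f^*f_*\regsh_X(-mK_X)\twoheadrightarrow\regsh_X(-mK_X)$, it pulls both maps back to a common log resolution $W\to X\to Y$, takes divisorial parts, and reads off the inequality $-g^\natural(mK_X)\le -h^\natural(mK_Y)$; this immediately gives $K_W-\tfrac1m h^\natural(mK_Y)\ge K_W-\tfrac1m g^\natural(mK_X)>-1$. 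No $\Q$-Cartierization, no Bertini, no appeal to the classical klt contraction theorem.

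Your approach instead passes to the $\Q$-Cartierization of $-K_X$, upgrades $f$-agg to honest relative semiampleness of the line bundle $-mK_Z$, then runs Bertini and invokes \cite[7.4]{kolhighI}/\cite[3.5]{fujino_kawamata}. This is essentially the strategy the paper itself uses later for Proposition~\ref{prop:contr still lt,2} (through Lemma~\ref{lm:logFano}), so you have in effect unified the birational and non-birational cases under one argument. What the paper's proof buys is self-containment and a transparent explanation of \emph{why} $f$-agg suffices: it is exactly the surjection needed to compare $\natural$-pullbacks across the non-small map. What your proof buys is a clean reduction to the $\Q$-Gorenstein world and to results already in the literature. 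Your closing remark that ``a direct attack comparing $m$-limiting canonical divisors on a common resolution would collide with the familiar difficulty'' is therefore too pessimistic: that is precisely the attack the paper carries out, and the $f$-agg hypothesis is exactly what makes it go through.
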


\begin{proof} Let $m$ be any integer. Since $f_*(-mK_X)=-mK_Y$, $(f_*\regsh_X(-mK_X))\dual\dual=\regsh_Y(-mK_Y)$. Since $X$ and $Y$ are integral, $f$ is dominant and $\regsh_X(-mK_X)$ is torsion-free, $f_*\regsh_X(-mK_X)$ is also torsion-free, \cite[8.4.5]{ega}. Thus we obtain an inclusion
$$
f_*\regsh_X(-mK_X)\hookrightarrow(f_*\regsh_X(-mK_X))\dual\dual=\regsh_Y(-mK_Y).
$$
If we pullback the above inclusion, we obtain a map
\begin{equation}\label{eq:pullbackpushforward}
f^*f_*\regsh_X(-mK_X)\rightarrow f^*\regsh_Y(-mK_Y).
\end{equation}
Since $f$ is generically an isomorphism, the kernel of \eqref{eq:pullbackpushforward} is a torsion sheaf. Moreover, $\regsh_X\cdot f_*\regsh_X(-mK_X)$ is the image of $f^*f_*\regsh_X(-mK_X)$ in $k(X)\cong k(Y)$, so it is the quotient of $f^*f_*\regsh_X(-mK_X)$ by its torsion (see \cite[II.7.12.2]{har}). Similarly, $\regsh_X\cdot\regsh_Y(-mK_Y)$ is the quotient of $f^*\regsh_Y(-mK_Y)$ by its torsion. So we obtain an inclusion
\begin{equation}\label{eq:inj}
\regsh_X\cdot f_*\regsh_X(-mK_X)\hookrightarrow \regsh_X\cdot\regsh_Y(-mK_Y).
\end{equation}

Since $-K_X$ is relatively asymptotically globally generated, by definition for each positive $m$ sufficiently divisible we have a surjection
\begin{equation}\label{eq:f-agg,surj}
f^*f_*\regsh_X(-mK_X)\twoheadrightarrow\regsh_X(-mK_X).
\end{equation}
Since $\regsh_X(-mK_X)$ is torsion-free and $\regsh_X\cdot f_*\regsh_X(-mK_X)$ is the quotient of the sheaf $f^*f_*\regsh_X(-mK_X)$ by its torsion (as above), the surjection in \eqref{eq:f-agg,surj} descends to a surjection
\begin{equation}\label{eq:surj}
\regsh_X\cdot f_*\regsh_X(-mK_X)\twoheadrightarrow\regsh_X(-mK_X).
\end{equation}

Let $g:W\rightarrow X$ and $h=f\circ g: W\rightarrow Y$ be log resolutions of $(X,\regsh_X(-mK_X))$ and $(Y,\regsh_Y(-mK_Y))$ respectively. The integer $m$ can be chosen sufficiently divisible so that we have \eqref{eq:surj} and $K_W-\frac{1}{m}g^\natural(mK_X)$ has coefficients strictly larger than $-1$ (the latter is by definition of log terminal singularities). Now the discussion proceeds as above. Pulling back \eqref{eq:inj}, we obtain the map
$$
g^*\big(\regsh_X\cdot f_*\regsh_X(-mK_X)\big)\rightarrow g^*\big(\regsh_X\cdot\regsh_Y(-mK_Y)\big),
$$
whose kernel is torsion, and thus descends to an injection
\begin{equation}\label{eq:injW}
\regsh_W\cdot f_*\regsh_X(-mK_X)\hookrightarrow \regsh_W\cdot\regsh_Y(-mK_Y)
\end{equation}
(notice that, for any $\Fscr\subseteq k(Y)$, $\regsh_W\cdot\regsh_X\cdot\Fscr=\regsh_W\cdot\Fscr$). Pulling back \eqref{eq:surj} we obtain the surjection
$$
g^*\big(\regsh_X\cdot f_*\regsh_X(-mK_X)\big)\twoheadrightarrow g^*\regsh_X(-mK_X),
$$
which descends to a surjection
\begin{equation}\label{eq:surjW}
\regsh_W\cdot f_*\regsh_X(-mK_X)\twoheadrightarrow\regsh_W\cdot\regsh_X(-mK_X).
\end{equation}

Considering \eqref{eq:injW} and \eqref{eq:surjW} together, we obtain the following diagram
\begin{equation}\label{eq:diagram}
\xymatrix{ & \regsh_W\cdot\regsh_Y(-mK_Y)\\
\regsh_W\cdot f_*\regsh_X(-mK_X) \ar@{^(->}[ur] \ar@{->>}[dr] & \\
 &  \regsh_W\cdot\regsh_X(-mK_X).}
\end{equation}
Recall that $(\regsh_W\cdot\regsh_Y(-mK_Y))\dual\dual=\regsh_W(-h^\natural(mK_Y))$ and $(\regsh_W\cdot\regsh_X(-mK_X))\dual\dual=\regsh_W(-g^\natural(mK_X))$. Hence, if we look at the divisorial part of \eqref{eq:diagram}, we obtain that $-g^\natural(mK_X)\leq -h^\natural(mK_Y)$. This implies that
$$
K_W-\frac{1}{m}h^\natural(mK_Y)\geq K_W-\frac{1}{m}g^\natural(mK_X).
$$
The divisor on the right hand side has coefficient strictly larger than $-1$ since $X$ has log terminal singularities and $m$ was chosen sufficiently divisible. This implies that $K_W-\frac{1}{m}h^\natural(mK_Y)$ has coefficient strictly larger than $-1$ as well, and since $h$ is a log resolution $(Y,\regsh_Y(-mK_Y))$, $Y$ has log terminal singularities, \cite[\S 7]{dFH}.
\end{proof}

As an immediate corollary of these techniques we can deduce the existence of non-$\Q$-factorial log flips for log terminal singularities. This result is an immediate consequence of the work of \cite{BCHM}, and beyond the scope of this paper, so we will not linger on it.

\begin{cor} Let $f : X \rightarrow Y$ be a flipping contraction with respect to a klt pair $(X,\Delta)$ (over an algebraically closed field of characteristic $0$). Then the flip of $f$ exists, and $Y$ has log terminal singularities.
\end{cor}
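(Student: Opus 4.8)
The plan is to treat the two assertions separately: existence of the flip is a direct consequence of \cite{BCHM}, while the statement that $Y$ has log terminal singularities follows by running the proof of Proposition \ref{prop:contr still lt} with $K_X$ replaced by $K_X+\Delta$. The point is that a flipping contraction is a small projective birational morphism $f:X\to Y$ on which $-(K_X+\Delta)$ is $f$-ample, and $K_X+\Delta$ is $\Q$-Cartier because $(X,\Delta)$ is a pair, so the anticanonical-type divisor in Proposition \ref{prop:contr still lt} is available to us in the twisted form $-(K_X+\Delta)$.

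For existence of the flip I would invoke \cite{BCHM}: the relative log canonical algebra $\bigoplus_{m\geq0}f_*\regsh_X(\lfloor m(K_X+\Delta)\rfloor)$ is a finitely generated $\regsh_Y$-algebra, and I would set $X^+:=\Proj_Y$ of this algebra, $f^+:X^+\to Y$ the structure morphism, and $\Delta^+$ the strict transform of $\Delta$. Again by the arguments of \cite{BCHM}, $f^+$ is small, projective and birational, $K_{X^+}+\Delta^+$ is $f^+$-ample, and $(X^+,\Delta^+)$ is klt; this is the flip of $f$. As the author remarks, this part is beyond the scope of the paper, so I would not dwell on it.

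For the singularities of $Y$, write $\Delta_Y:=f_*\Delta$, so that $f_*(K_X+\Delta)=K_Y+\Delta_Y$ as Weil divisors (using the convention $f_*K_X=K_Y$ together with the fact that $f$ is small). Since $-(K_X+\Delta)$ is $\Q$-Cartier and $f$-ample, $\regsh_X(-m(K_X+\Delta))$ is relatively globally generated for every sufficiently divisible $m$; thus $-(K_X+\Delta)$ plays exactly the role that $-K_X$ plays in Proposition \ref{prop:contr still lt}. I would then repeat that proof verbatim, replacing $K_X$ by $K_X+\Delta$ and $K_Y$ by $K_Y+\Delta_Y$ throughout: fixing a sufficiently high common log resolution $g:W\to X$, $h=f\circ g:W\to Y$ of $(X,\regsh_X(-m(K_X+\Delta)))$, $(Y,\regsh_Y(-m(K_Y+\Delta_Y)))$ and $(X,\Delta)$, the same diagram chase on $W$ produces, for $m$ sufficiently divisible,
$$
g^\natural\big(m(K_X+\Delta)\big)\ \geq\ h^\natural\big(m(K_Y+\Delta_Y)\big)
$$
coefficient-wise.

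To conclude I would combine two elementary observations. First, since $K_X+\Delta$ is $\Q$-Cartier, $\tfrac1m g^\natural(m(K_X+\Delta))=g^*(K_X+\Delta)$ for $m$ divisible by the Cartier index, and $K_W-g^*(K_X+\Delta)$ has all coefficients $>-1$ because $(X,\Delta)$ is klt and $g$ is a log resolution. Second, the $\natural$-pullback is monotone in its argument — immediate from $h^\natural D=\div(\regsh_Y(-D)\cdot\regsh_W)$ — and $mK_Y\leq m(K_Y+\Delta_Y)$ since $\Delta_Y\geq0$, so $h^\natural(mK_Y)\leq h^\natural(m(K_Y+\Delta_Y))$. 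Hence
$$
K_W-\tfrac1m h^\natural(mK_Y)\ \geq\ K_W-\tfrac1m h^\natural\big(m(K_Y+\Delta_Y)\big)\ \geq\ K_W-g^*(K_X+\Delta)\ >\ -1
$$
coefficient-wise, which is precisely the condition that $Y$ has log terminal singularities in the sense of \cite[\S7]{dFH}. The step I expect to require the most care is this last transfer: passing from a discrepancy bound for the auxiliary boundary $(Y,\Delta_Y)$ — which is \emph{not} a klt pair, since $K_Y+\Delta_Y$ need not be $\Q$-Cartier, this being exactly why $f$ is a flipping contraction and not an isomorphism — to a discrepancy bound for $K_Y$ alone. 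This is what monotonicity of the $\natural$-pullback buys us, and it is also the reason the hypothesis need only be that $-(K_X+\Delta)$, rather than $-K_X$, is relatively globally generated; the rest is just checking that the proof of Proposition \ref{prop:contr still lt} is insensitive to adding $\Delta$.
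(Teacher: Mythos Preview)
Your proof is correct and, for the assertion that $Y$ has log terminal singularities, follows exactly the route the paper intends by ``very similar techniques'': run Proposition~\ref{prop:contr still lt} with $K_X+\Delta$ in place of $K_X$, then use monotonicity of the $\natural$-pullback to pass from the bound on $K_W-\tfrac1m h^\natural(m(K_Y+\Delta_Y))$ to one on $K_W-\tfrac1m h^\natural(mK_Y)$. The one organizational difference is the existence of the flip: you invoke \cite{BCHM} directly on $X$, whereas the paper first establishes that $Y$ has log terminal singularities and then appeals to \cite[92]{kollarexercises} to get finite generation of $\Rscr(Y,K_Y+f_*\Delta)$, hence the flip. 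Since \cite[92]{kollarexercises} is itself a consequence of \cite{BCHM}, this is a difference in packaging rather than in substance; the paper's ordering has the mild advantage of making the corollary an internal consequence of the section's techniques.
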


\begin{proof} Indeed $X$ has log terminal singularities. Since $f$ is a flipping contraction, $K_X+\Delta$ is $f$-anti-ample. With very similar techniques to the ones of the previous proof, $Y$ has log terminal singularities, which by \cite[92]{kollarexercises} implies that $\Rscr(Y,K_Y+f_*\Delta)$ is finitely generated.
\end{proof}

The above statement \ref{prop:contr still lt} assumes the map to be birational, but only asks for $-K_X$ to be $f$-agg. If we assume that $-K_X$ is $f$-ample, we can remove the assumption that $f$ is birational, see \ref{prop:contr still lt,2}. It is reasonable to expect that the result holds more generally for any projective map with connected fibers $f:X\rightarrow Y$, $X$ with log terminal singularities and $-K_X$ $f$-agg. Before we show this, we need the following lemma.

\begin{lm}\label{lm:logFano} Let $f:X\rightarrow Y$ be a projective morphism between normal quasi-projective varieties over an algebraically closed field of characteristic $0$. If $X$ has log terminal singularities and $-K_X$ is $f$-ample, we can find a boundary $\Delta$ on $X$ such that $(X,\Delta)$ is a klt pair and $-(K_X+\Delta)$ is $f$-ample.
\end{lm}

\begin{proof} The proof is the same as \cite[5.7]{wStefano2}, but in a relative setting. Let $g:W=\Proj_X\Rscr(X,-K_X)\rightarrow X$ be the $\Q$-Cartierization of $-K_X$. Since $g$ is projective and small, $W$ has log terminal singularities, \ref{lm:QCart still lt}; moreover $W$ is $\Q$-Gorenstein. 

Let $h:=f\circ g$. By \ref{thm:QCart positivity}, $-K_W$ is $\Q$-Cartier and $h$-ample. Let  $A$ be any $f$-ample Cartier divisor on $X$ . Since $-K_W$ is $h$-ample, there exists $b>1$ such that $-bK_W-g^*A$ is $h$-globally generated. Let $M$ be the general element in the system $M\in |\regsh_W (-bK_W-g^*A)|$. Notice that $-bK_W-bM \sim g^*A$, which is trivial on the fibers. Let $\Delta:=g_*(M/b)$. Since $K_W+M/b\sim_{\Q}-g^*A$, $K_X+\Delta$ is $\Q$-Cartier, and $K_X+\Delta\sim_{\Q}-A$. Finally, since $W$ has $\Q$-Gorenstein log terminal singularities, and $M$ was general, $(Y,M/b)$ is a klt pair. Notice that $g_*(M/b)=\Delta$ and $K_W+M/b=g^*(K_X+\Delta)$ (by construction). Thus $(X,\Delta)$ is a klt pair \cite[2.30]{komo}.
\end{proof}

\begin{prop}\label{prop:contr still lt,2} Let $X$ be a normal variety over an algebraically closed field of characteristic $0$ having log terminal singularities, and let $f:X\rightarrow Y$ be a projective morphism with connected fibers, with $-K_X$ $f$-ample. Then $Y$ still has log terminal singularities.
\end{prop}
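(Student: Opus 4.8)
The strategy is to use Lemma \ref{lm:logFano} to reduce to a Fano-type fibration, and then to descend log terminality to the base by the canonical bundle formula. First I would note that $f$ is a contraction, i.e. $f_*\regsh_X=\regsh_Y$: it is projective with connected fibres and $Y$ is normal over a field of characteristic $0$, so in the Stein factorization $X\to Y'\to Y$ the finite morphism $Y'\to Y$ has connected, hence singleton, fibres, and is therefore an isomorphism by normality. Since having log terminal singularities is a condition local on $Y$, we may replace $Y$ by an affine open subset; then $X$ is quasi-projective over $Y$, and Lemma \ref{lm:logFano} produces a boundary $\Delta$ such that $(X,\Delta)$ is klt and $-(K_X+\Delta)$ is $\Q$-Cartier and $f$-ample.

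Next I would pass from $f$-ampleness to relative $\Q$-triviality. For $N$ sufficiently divisible $\regsh_X(-N(K_X+\Delta))$ is globally generated over the affine base $Y$; picking general members $D_1,\dots,D_k\in\lvert -N(K_X+\Delta)\rvert$ and setting $D:=\tfrac{1}{Nk}(D_1+\dots+D_k)$ we obtain an effective $\Q$-divisor with $D\sim_{\Q}-(K_X+\Delta)$ and all coefficients equal to $\tfrac{1}{Nk}<1$; a Bertini-type argument in characteristic $0$ (see e.g. \cite{komo}) shows that $(X,\Delta+D)$ is still klt. By construction $K_X+\Delta+D\sim_{\Q}0$, so $f\colon(X,\Delta+D)\to Y$ is a klt-trivial fibration.

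Now the canonical bundle formula for klt-trivial fibrations (Ambro's theorem on the moduli $b$-divisor of an lc-trivial fibration; see also the work of Fujino and Gongyo) applies: there are an effective $\Q$-divisor $B_Y$ on $Y$ and a $b$-nef $\Q$-$b$-divisor $\mathbf{M}$ with $K_X+\Delta+D\sim_{\Q}f^*(K_Y+B_Y+\mathbf{M}_Y)$, and, since $(X,\Delta+D)$ is klt over the generic point of $Y$, the moduli part can be represented by a general effective $M_Y\sim_{\Q}\mathbf{M}_Y$ making $(Y,B_Y+M_Y)$ a klt pair. By Theorem \ref{thm:dFH7.2}, $Y$ has log terminal singularities.

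The step I expect to be the real obstacle is the last one: one must arrange $D$ so that $(X,\Delta+D)$ is klt over the generic point of $Y$ — this is precisely where the $f$-ampleness of $-K_X$ (rather than mere $f$-agg-ness, cf. the discussion preceding Lemma \ref{lm:logFano}) enters, via global generation and Bertini — and one must turn the moduli part $\mathbf{M}$ into an honest effective boundary on $Y$, which in our situation is facilitated by the general fibre of $f$ being a Fano variety. As an alternative more in keeping with the rest of the paper, one may instead observe that $Y$ is the ample model over itself of the klt pair $(X,\Delta+D)$ and deduce the klt-ness of $Y$ from \cite{BCHM}.
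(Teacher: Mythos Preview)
Your argument is correct and follows the same route as the paper's proof: use Lemma~\ref{lm:logFano} to produce a klt pair $(X,\Delta)$ with $-(K_X+\Delta)$ $f$-ample, perturb by general members to a klt pair that is relatively $\Q$-trivial over $Y$, and then descend klt-ness to $Y$. The only cosmetic differences are that the paper works globally (choosing an ample $H$ on $Y$ so that $H':=-(K_X+\Delta)+f^*H$ is ample on $X$, then taking general members of $|n\Delta+mH'|$ and $|m'H'|$) rather than passing to an affine open of $Y$, and that the paper invokes \cite[0.2]{fujino_kawamata} directly for the descent step rather than the canonical bundle formula of Ambro/Fujino--Gongyo. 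That citation already packages the step you flag as the ``real obstacle'' (turning the moduli part into an effective boundary making $(Y,B_Y+M_Y)$ klt), so your concern there is well-placed but already resolved in the literature; your alternative remark that $Y$ is ``the ample model over itself'' is not quite right and should be dropped.
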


\begin{proof} The proof follows almost verbatim the one in \cite[3.5]{fujino_kawamata}. Since $-K_X$ is $f$-ample and $X$ has log terminal singularities, we can find $\Delta$ such that $(X,\Delta)$ is klt and $-K_X-\Delta$ is $f$-ample, \ref{lm:logFano}.

Let $H$ be an ample $\Q$-Cartier $\Q$-divisor on $Y$ such that $H':=-(K_X+\Delta)+f^*H$ is an ample $\Q$-Cartier divisor. Let $n\Delta$ be an integral divisor. Since $H'$ is ample, for $m$ sufficiently divisible $n\Delta+mH'$ is globally generated. Let $M\in|\regsh_X(n\Delta+mH')|$ be a general member. Since $M$ is general and $H'\sim_{\Q}\frac{1}{m}(M-n\Delta)$, $(X,\Delta+\e/m(M-n\Delta))$ is a klt pair for $0<\e\ll1$, \cite[2.43]{komo}. Let $m'$ be sufficiently divisible, so that $m'H'$ is very ample, and let $M'\in |\regsh_X(m'H')|$ be a general member. As before, $(X,\Delta+\e/m(M-n\Delta)+(1-\e)/m' M')$ is a klt pair. However,
$$
K_X+\Delta+\e/m(M-n\Delta)+(1-\e)/m' M'\sim_{\Q,f}0.
$$
By \cite[0.2]{fujino_kawamata} we have the desired result.
\end{proof}

\section{The Weil N\'e{}ron-Severi space}\label{sect:NSW}

\begin{df} Let $f:X\rightarrow U$ be a projective morphism of quasi-projective normal varieties. We say that a $\Q$-Weil divisor $D$ on $X$ is \emph{$f$-numerically trivial} if it is $f$-nef and $f$-anti-nef (i.e., $-D$ is $f$-nef).

If $X$ is projective, we simply say that $D$ is \emph{numerically trivial} if it is nef and anti-nef.
\end{df}

\begin{rk} We do not need nefness to be additive to give the above definition. However, when it is, the set of numerically trivial $\Q$-Weil divisors is a subgroup of $\Div(X)_\Q$.
\end{rk}

\begin{df}\label{df:NSW} Let $X\rightarrow U$ be a projective morphism of quasi projective varieties over an algebraically closed field of characteristic $0$, with $X$ having log terminal singularities. We define $\NS(X/U)_{W,\Q}$ to be the space of $\Q$-Weil divisors modulo the $f$-numerically trivial ones, and by $\NS(X/U)_W:=\NS(X/U)_{W,\Q}\otimes_\Q\R$.

If $U=\Spec k$, we will use the notation $\NS(X)_W:=\NS(X/\Spec k)_W$.
\end{df}

\begin{prop}\label{prop:NS<NSW} Let $X$ be a normal projective variety over an algebraically closed field of characteristic $0$ having log terminal singularities . There is a natural injection $\NS(X)_\R\hookrightarrow\NS(X)_W$.
\end{prop}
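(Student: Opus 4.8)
The plan is to induce the map from the tautological inclusion of $\Q$-Cartier divisors into $\Q$-Weil divisors, and to reduce both well-definedness and injectivity to a single comparison: for a $\Q$-Cartier divisor, nefness in the sense of Weil divisors (subsection~\ref{subsect:Weil}ff.) coincides with classical nefness.

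\emph{Setup.} By lemma~\ref{lm:nefnessadditive} nefness is additive on $X$, so the $\Q$-Weil divisors that are numerically trivial in the new sense form a $\Q$-subspace $N_W\subseteq\Div(X)_\Q$; thus $\NS(X)_{W,\Q}=\Div(X)_\Q/N_W$ is a genuine quotient vector space and $\NS(X)_W=\NS(X)_{W,\Q}\otimes_\Q\R$. Write $N$ for the subspace of classically numerically trivial $\Q$-Cartier divisors (those $D$ with $D\cdot C=0$ for every curve $C\subseteq X$), so that the classical N\'eron-Severi space is $\NS(X)_\Q=\{\Q\text{-Cartier divisors}\}/N$ and $\NS(X)_\R=\NS(X)_\Q\otimes_\Q\R$. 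The composite of the inclusion $\{\Q\text{-Cartier divisors}\}\hookrightarrow\Div(X)_\Q$ with the projection $\Div(X)_\Q\to\NS(X)_{W,\Q}$ is $\Q$-linear; the task is to identify its kernel among $\Q$-Cartier divisors with $N$, and then tensor with $\R$.

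\emph{The comparison.} I claim a $\Q$-Cartier divisor $D$ is nef in the sense of Weil divisors if and only if it is classically nef. If $D$ is classically nef and $A$ is any ample $\Q$-Cartier divisor, then $D+A$ is ample, so for every sufficiently divisible $m$ the Cartier divisor $m(D+A)$ is very ample, hence $\regsh_X(m(D+A))$ is globally generated; since $A$ was arbitrary, this is precisely the definition of $D$ being nef for Weil divisors. Conversely, suppose $D$ is nef in the new sense and fix an ample Cartier divisor $H$. For each rational $\e>0$ the divisor $\e H$ is ample $\Q$-Cartier, so $\regsh_X(m(D+\e H))$ is globally generated for $m$ sufficiently divisible; as $D+\e H$ is $\Q$-Cartier, this says the integral Cartier divisor $m(D+\e H)$ is basepoint-free (cf.\ remark~\ref{rk:globgen vs bpf}), hence classically nef, so $(D+\e H)\cdot C\geq0$ for every curve $C$. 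Letting $\e\to0^+$ gives $D\cdot C\geq0$, i.e.\ $D$ is classically nef.

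\emph{Conclusion and the main point.} Applying the comparison to $D$ and to $-D$ shows that a $\Q$-Cartier divisor lies in $N_W$ exactly when it lies in $N$; hence the kernel computed in the setup is $N$, and the map descends to an injection $\NS(X)_\Q\hookrightarrow\NS(X)_{W,\Q}$, natural in $X$ since it is induced by the inclusion of divisor groups. Tensoring with $\R$ over $\Q$ (which is exact) yields the desired injection $\NS(X)_\R\hookrightarrow\NS(X)_W$. The only real content is the nontrivial direction of the comparison — that a new-nef $\Q$-Cartier divisor is classically nef; the crux is that global generation of the line bundles $\regsh_X(m(D+\e H))$ is genuine basepoint-freeness in the Cartier case, which converts the cohomological definition of nefness back into an intersection-theoretic one, after which perturbing by $\e H$ and passing to the limit eliminates the auxiliary ample divisor. (One could alternatively route through lemma~\ref{lm:amplecone=interiornefcone} to trade nefness for ampleness, but the direct argument above is shorter.)
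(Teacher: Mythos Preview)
Your proof is correct and follows essentially the same route as the paper: both reduce the injection to the identity $\ker\pi_C=\ker\pi\cap\CDiv(X)$, i.e., to the fact that for $\Q$-Cartier divisors the two notions of numerical triviality coincide. The paper simply asserts this equality and invokes diagram chasing, whereas you supply the underlying comparison (Weil-nef $\Leftrightarrow$ classically nef for $\Q$-Cartier divisors) explicitly and work over $\Q$ before tensoring with $\R$; this is more detailed but not a different strategy.
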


\begin{proof} Let $\pi:\Div(X)_\R\rightarrow\NS(X)_\R$ and $\pi_C:\CDiv(X)_\R\rightarrow\NS(X)_W$ be the two quotient maps. We have the commutative diagram
$$
\xymatrix{ 0 \ar[r] & \ker\pi \ar[r] & \Div(X)_\R \ar[r] & \NS(X)_W \ar[r] & 0 \\
0 \ar[r] & \ker\pi_C \ar@{^(->}[u] \ar[r] & \CDiv(X)_\R \ar@{^(->}[u] \ar[r] & \NS(X)_\R  \ar@{^(-->}[u] \ar[r] & 0.}
$$
Notice that $\ker\pi$ are all the $\R$-Weil divisors which are numerically trivial, while $\ker\pi_C$ are all the $\R$-Cartier divisors which are numerically trivial. Since $\ker\pi_C=\ker\pi\cap\CDiv(X)_\R$, the existence and the injectivity of the map $\NS(X)_\R\hookrightarrow\NS(X)_W$ follows form standard diagram chasing.
\end{proof}

It is well known that $\NS(X)_\R$ is a finitely generated vector space. The standard proof, see \cite{kle}, uses the fact that, if $f:X\rightarrow Y$ is a proper, birational morphism, then the pullback induces an injection $f^*:\NS(Y)_\R\hookrightarrow\NS(X)_\R$. Indeed, clearly the pullback of a nef $\Q$-Cartier divisor is nef. Morever, if $C$ is a curve on $Y$, there is a curve $C'$ on $X$ such that $f(C')=C$, \cite[lemma 4.1]{kle}, and $D.C=f^*D.C'$ for every $\Q$-Cartier divisor $D$ on $Y$.

Using this, we can reduce to the smooth case (using a resolution), where the result is well-known. Since the pullback of Weil divisors is not numerically trivial on the fibers in general (see the discussion in \cite{ltII}), a proper, birational morphism $f:X\rightarrow Y$ will not necessarily induce a map $\NS(Y)_W\rightarrow\NS(X)_W$ and, even when such map exists, it will not necessarily be injective. Morevover, we do not define nefness for Weil divisors with intersection with curves, so the technique of \cite{kle} does not work in this case. We will prove that $\NS(X)_W$ is finitely generated in \ref{thm:NSWfg}, but we need some preliminary results.

The following lemma is a non-$\Q$-Cartier version of \cite[Proposition 4.1]{kle}. The proof is a simplified version of the one of \cite[4.3]{stefano2}.

\begin{lm}\label{lm:kleiman4.1} Let $f:Y\rightarrow X$ be a small projective, birational morphism of normal projective varieties over an algebraically closed field. For an ample $\R$-divisor $A$ on $Y$, $f_*A$ is nef.
\end{lm}

\begin{proof} It is enough to assume that $A$ is a $\Q$-divisor. Moreover, it is enough to assume that $A$ is $\Q$-Cartier. Indeed, let us assume the result for $\Q$-Cartier ample divisors. Let $g:Z\rightarrow Y$ be the $\Q$-Cartierization of $A$ (which exists since $\Rscr(X,A)$ is finitely generated), and let $A_Z:=g^{-1}_*A$. By \ref{thm:QCart positivity}, $A_Z$ is ample $\Q$-Cartier, and $(fg)_*A_Z=f_*A$.

So let us assume that $A$ is ample $\Q$-Cartier on $Y$, and let $B$ be an ample $\Q$-Cartier divisor on $X$. We need to show that, for $m$ sufficiently divisible, $\regsh_X(m(f_*A+B))$ is globally generated. Since $f^*B$ is nef, $A+f^*B$ is still ample, and $f_*\regsh_Y(m(A+f^*B))=\regsh_X(m(f_*A+B))$, by \ref{lm:pushforwardsheaves}. Thus is it enough to show that, if $A$ on $Y$ is ample $\Q$-Cartier, $f_*A$ is agg.

Let $L$ be any very ample Cartier divisor on $X$. By Serre's vanishing, for $m$ sufficiently divisible
$$
H^i(Y,\regsh_Y(mA-i f^*L))=0,\quad i>0.
$$
Similarly, by the relative Serre's vanishing, for $m$ sufficiently divisible
$$
R^jf_*\regsh_Y(mA-if^*L)=0,\quad i,j>0.
$$
Thus, the Leray's spectral sequence converges immediately and
$$
H^i(X,f_*\regsh_Y(mA-if^*L))\cong H^i(Y,\regsh_Y(mA-i f^*L))=0,\quad i>0
$$
for $m$ sufficiently divisible. Since $f_*\regsh_Y(mA-if^*L)\cong\regsh_X(mf_*A-iL)$, by Castelnuovo-Mumford regularity $\regsh_X(mf_*A)$ is globally generated for $m$ sufficiently divisible.
\end{proof}

\begin{qt}\label{qt:pushforwardnef} Is the previous statement true without the assumption of $f$ being small? In this case, the same reasoning as above gives that $f_*\regsh_X(mA)$ is globally generated for $m$ sufficiently divisible, and that $H^1(Y,f_*\regsh_Y(mA))=0$. Since the surjective image of torsion free sheaves (between integral schemes) is still torsion free, we have an exact sequence
$$
0\rightarrow f_*\regsh_X(mA)\rightarrow\regsh_Y(mf_*A)\rightarrow \Qscr\rightarrow 0,
$$
where $\Qscr$ is a torsion sheaf. If $\dim\Supp\Qscr=0$, then $\Qscr$ is globally generated, and thus so is $\regsh_Y(mf_*A)$. It is unclear if we have the desired global generation in general.
\end{qt}

\begin{cor}\label{cor:pushforwardnef} Let $f:Y\rightarrow X$ be a small, projective, birational morphism of normal projective varieties  over an algebraically closed field of characteristic $0$, with $X$ having log terminal singularities. For a nef $\R$-divisor $A$ on $Y$. Then $f_*A$ is nef.
\end{cor}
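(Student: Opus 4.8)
The plan is to reduce to Lemma~\ref{lm:kleiman4.1} (pushforward of an \emph{ample} divisor under a small map is nef) by perturbing $A$ into the ample cone of $Y$ and then letting the perturbation shrink to zero inside the finite-dimensional space $\NS(X)_W$.

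Concretely, I would first treat the case where $A$ is a $\Q$-divisor. Once this case is known, $f_*$ sends a numerically trivial $\Q$-Weil divisor to a numerically trivial one (apply the $\Q$-case to $D$ and to $-D$), so, being $\R$-linear on divisors, it descends to a continuous linear map $\NS(Y)_W\to\NS(X)_W$; the general $\R$-divisor case then follows from the $\Q$-case together with the description of $\Nef(Y)_W$ as the cone generated by nef $\Q$-Weil classes. So assume $A$ is a nef $\Q$-Weil divisor on $Y$. By Lemma~\ref{lm:QCart still lt}, $Y$ has log terminal singularities over our algebraically closed field of characteristic $0$, hence Lemma~\ref{lm:amplecone=interiornefcone} applies on $Y$ with no $\Q$-Cartier hypothesis: fixing an ample Cartier divisor $H$ on $Y$, the divisor $A+\varepsilon H$ is ample on $Y$ for every rational $0<\varepsilon\ll 1$.

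Now apply Lemma~\ref{lm:kleiman4.1} to each $A+\varepsilon H$, obtaining that $f_*(A+\varepsilon H)$ is nef on $X$; since $f$ is small, pushforward is linear here, so $f_*(A+\varepsilon H)=f_*A+\varepsilon f_*H$, and thus $f_*A+\varepsilon f_*H$ is nef on $X$ for all rational $0<\varepsilon\ll 1$. Letting $\varepsilon\to 0^{+}$, the classes $[f_*A+\varepsilon f_*H]$ converge to $[f_*A]$ in $\NS(X)_W$, which is finite dimensional by Theorem~\ref{thm:NSWfg}; since the Weil nef cone is closed, $f_*A$ is nef.

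The delicate point, and the one I expect to be the main obstacle, is this last step: that $\Nef(X)_W$ is closed, equivalently that $f_*A$ itself (and not merely all of its small ample-Cartier perturbations) is nef. This does not follow formally from additivity of nefness together with Lemma~\ref{lm:amplecone=interiornefcone} — those only let one pass back and forth between ``nef'' and ``ample'' — and one genuinely needs an extra input, namely finite-dimensionality of $\NS(X)_W$ combined with a Kleiman-type description of $\Nef(X)_W$ as an intersection of closed half-spaces, or else one builds closedness into the definition of nef $\R$-Weil divisor from the outset, in which case the limiting step is free and only the perturbation argument above is required. I would therefore either establish closedness of $\Nef(X)_W$ before stating this corollary, or phrase the definition of the Weil nef cone so that it is automatic.
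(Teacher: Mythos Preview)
Your approach is essentially the paper's: perturb $A$ into the ample cone using Lemma~\ref{lm:amplecone=interiornefcone}, push forward via Lemma~\ref{lm:kleiman4.1}, and then pass to the limit using that nefness is closed. The paper does this in three lines and simply cites Lemma~\ref{lm:amplecone=interiornefcone} for the closedness step; it does not separate out the $\Q$-case.

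One remark on your handling of the limit: your appeal to Theorem~\ref{thm:NSWfg} is a forward reference in the paper's ordering (this corollary precedes it), and although it is not circular---that theorem uses only Lemma~\ref{lm:kleiman4.1}---it is also unnecessary. The convergence $f_*A+\varepsilon f_*H\to f_*A$ already takes place in $\Div(X)_\R$, and the closedness you worry about does follow from Lemma~\ref{lm:amplecone=interiornefcone} together with the log terminal hypothesis (via additivity of agg as in Lemma~\ref{lm:nefnessadditive}), without any finite-dimensionality or Kleiman-type duality. So the obstacle you flag is real, but the paper's earlier lemmas already dispose of it.
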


\begin{proof} Notice that $Y$ still has log terminal singularities, \ref{lm:QCart still lt}. We can write $A$ as a limit of ample $\R$-divisors $A_m$. Then $f_*A=\lim f_*A_m$. Since nefness is a closed property by \ref{lm:amplecone=interiornefcone}, $f_*A$ is nef.
\end{proof}

The next result is in line \cite[5.4]{BdFFU}, where it is shown that the space of Weil divisors modulo the numerically Cartier is finitely generated. The two results are independent, however, as there is no relation between being numerically Cartier or being numerically trivial. Indeed, if the singularities are klt over an algebraically closed field of characteristic $0$, all numerically Cartier divisors are automatically $\Q$-Cartier, \cite[5.8]{BdFFU}.

\begin{thm}\label{thm:NSWfg} Let $g:X\rightarrow U$ be a projective morphism of normal quasi-projective varieties over an algebraically closed field of characteristic $0$, with $X$ having log terminal singularities. The space $\NS(X/U)_W$ is finitely dimensional.
\end{thm}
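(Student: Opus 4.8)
The plan is to reduce the problem to the $\Q$-Cartier case, where the result is classical. The essential tool is the $\Q$-Cartierization (Lemma \ref{lm:themostusefullemmaintheworld}) together with Theorem \ref{thm:QCart positivity}, which says positivity is preserved under $\Q$-Cartierization. First I would reduce to the absolute case: the statement is local on the base $U$, so after shrinking $U$ we may assume $U$ affine, and then it suffices to bound $\dim\NS(X)_W$ for $X$ projective. Next, I would try to realize $\NS(X)_W$ (or a quotient/subspace capturing its dimension) inside $\NS(Z)_\R$ for a single fixed birational model $Z$; since the latter is finite-dimensional, this would finish the argument.

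The key observation is that the Weil Néron–Severi space only sees finitely many "directions" because $X$ has log terminal singularities. Concretely, pick divisors $D_1,\dots,D_N$ on $X$ whose classes span $\NS(X)_{W,\Q}$ — a priori this is the whole (infinite-rank) group of $\Q$-Weil divisors, so this is not immediate. The right move is: the group $\mathrm{Cl}(X)_\Q$ of Weil $\Q$-divisor classes need not be finitely generated, but I claim $\NS(X)_W$ is still finite-dimensional. For this I would take, for each pair of a nef class and its negative being nef, the corresponding relation; better, use the $\Q$-Cartierization of a \emph{single} sufficiently general very ample-type combination. More precisely: choose finitely many Weil divisors $D_1, \dots, D_r$ whose images generate $\NS(X)_W$ over $\R$ — to justify that finitely many suffice, note that if $D$ is any Weil divisor, then for an ample Cartier $A$ both $D + tA$ and $-D + tA$ eventually have finitely generated section rings (by \cite[92]{kollarexercises}, since $X$ has log terminal singularities), so one may $\Q$-Cartierize. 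Take a small projective birational $f\colon Y \to X$ that simultaneously $\Q$-Cartierizes $D_1,\dots,D_r$ (take $Y = \Proj_X$ of a suitable Rees-type algebra, or iterate Lemma \ref{lm:themostusefullemmaintheworld} finitely many times; $Y$ is again log terminal by Lemma \ref{lm:QCart still lt}). Then each $f^{-1}_*D_i$ is $\Q$-Cartier on $Y$.

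Now I would use the pair of maps $f^{-1}_*$ and $f_*$ between $\Q$-Weil divisors on $X$ and on $Y$. By Theorem \ref{thm:QCart positivity}, $D_i$ is numerically trivial on $X$ iff $f^{-1}_*D_i$ is numerically trivial on $Y$; so the assignment $[D_i] \mapsto [f^{-1}_*D_i]$ gives a well-defined injective linear map from the span of the $[D_i]$ inside $\NS(X)_W$ into $\NS(Y)_W$. But on $Y$, after a further $\Q$-Cartierization we are looking at $\Q$-Cartier classes, and Corollary \ref{cor:pushforwardnef} together with Theorem \ref{thm:1.2} (nef $\iff$ $f$-nef and $f_*$ nef) lets us compare numerical triviality of $\Q$-Cartier classes on $Y$ with their pushforwards; in any case, the span of finitely many $\Q$-Cartier classes modulo numerically trivial ones sits inside $\NS(Y)_\R$, which is finite-dimensional by the classical theorem. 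Hence the span of $[D_1],\dots,[D_r]$ in $\NS(X)_W$ is finite-dimensional, and since these were chosen to span, $\NS(X)_W$ is finite-dimensional.

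The main obstacle I anticipate is justifying that \emph{finitely many} Weil classes already span $\NS(X)_W$ — equivalently, that the infinitely many non-$\Q$-Cartier directions collapse. The clean way is probably: show that for a fixed $\Q$-Cartierization-type model $f\colon Y\to X$ handling a fixed ample system, every Weil divisor $D$ on $X$ satisfies that $f^{-1}_*D$ differs from a $\Q$-Cartier divisor by an $f$-exceptional (hence, after pushforward, numerically trivial) divisor — but this is false in general, so instead one must argue directly that $\NS(X)_W \hookrightarrow \NS(Y)_W$ for a \emph{common} small $Y$ dominating all relevant $\Q$-Cartierizations, and that on such $Y$ one can iterate until reaching a model where the relevant classes become $\Q$-Cartier. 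Controlling this iteration (that it terminates, using finite generation of the relevant section algebras and that log terminality is preserved) is the technical heart; after that, everything reduces to the Picard-number finiteness on a projective log terminal — indeed $\Q$-Gorenstein, after enough $\Q$-Cartierization — variety, which follows from the smooth case via a resolution exactly as in \cite{kle}.
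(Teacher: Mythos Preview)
Your proposal has the right instinct---pass to a small projective birational model where the relevant classes become $\Q$-Cartier---but the argument as written is circular, and you correctly identify where. You want to choose finitely many Weil divisors $D_1,\dots,D_r$ whose classes span $\NS(X)_W$, then simultaneously $\Q$-Cartierize them; but you cannot choose finitely many spanning classes until you already know the space is finite-dimensional. Your suggested repair (iterate $\Q$-Cartierizations, control termination) does not close this gap: each $\Q$-Cartierization handles one divisor, and there is no a priori bound on how many you need.

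The paper's fix is much simpler and dissolves the circularity entirely: instead of $\Q$-Cartierizing finitely many chosen divisors, take a single $\Q$-\emph{factorialization} $f\colon Y\to X$ (which exists for log terminal $X$ by \cite[108]{kollarexercises}). Now $Y$ is $\Q$-factorial, so \emph{every} Weil divisor on $Y$ is $\Q$-Cartier, and $\NS(Y)_W=\NS(Y)_\R$ is finite-dimensional by the classical theorem. Since $f$ is small, the strict transform gives an isomorphism $\Div(X)_\R\cong\Div(Y)_\R$ of the full Weil divisor groups---no need to select generators. The remaining point is to show that if $D\in\Div(X)_\R$ has $f^{-1}_*D$ numerically trivial on $Y$, then $D$ is numerically trivial on $X$; this follows from Lemma~\ref{lm:kleiman4.1} (pushforward of ample along a small map is nef): for any ample $B$ on $Y$, $f^{-1}_*D+B$ is ample, so $D+f_*B$ is nef, and letting $B\to 0$ gives $D$ nef (and likewise $-D$). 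This exhibits $\NS(X)_W$ as a \emph{quotient} of $\NS(Y)_\R$, hence finite-dimensional. Note also that your invocation of Theorem~\ref{thm:QCart positivity} does not do what you want: that result compares positivity of $D$ with that of $f^{-1}_*D$ only on the \emph{specific} $\Q$-Cartierization of $D$ (where $f^{-1}_*D$ is $f$-ample by construction), not on an arbitrary small model; and your appeal to Theorem~\ref{thm:nef<=>} would be forward-referencing in the paper's logical order.
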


\begin{proof} We will prove the statement when $U=\Spec k$. The proof in the relative setting follows with the obvious modifications. Notice that, when $X$ itself is projective, $\NS(X/U)_W$ is a quotient of $\NS(X)_W$, so the proof below is enough for our study.

Let $f:Y\rightarrow X$ be a $\Q$-factorialization of $X$, that is, $f$ is a small, projective, birational morphism with $Y$ normal and $\Q$-factorial. This map exists since the singularities are log terminal, see \cite[108]{kollarexercises}. Then, $f^*$ is an isomorphism $f^*:\Div(X)_\R\rightarrow\Div(Y)_\R\cong\CDiv(Y)_\R$. Let $N_Y$ be the subspace of numerically trivial $\R$-Weil divisors on $Y$, and similarly $N_X$ the subspace of numerically trivial $\R$-Weil divisors on $X$. Then $\NS(X)_W=\Div(X)_\R/N_X$ and $\NS(Y)_W=\NS(Y)_\R=\Div(Y)/N_Y$. Let $N=N_X\cap(f^*)^{-1} N_Y$. The pullback does not induce a morphism $\NS(X)_W\rightarrow\NS(Y)_W$; however, it induces a morphism $\Div(X)_\R/N\rightarrow\NS(Y)_\R$, and $\Div(X)_\R/N$ naturally surjects onto $\NS(X)_W$. Since $\NS(Y)_\R$ is finitely generated, it is enough to prove that the map $\Div(X)_\R/N\rightarrow\NS(Y)_\R$ is injective, yielding the following diagram:
\begin{equation}\label{eq:f*surj}
\xymatrix{\Div(X)_\R \ar[r]^{f^*}_{\cong} \ar[d] & \Div(Y)_\R \ar[d]\\
\Div(X)_R/N \ar@{^(->}[r]^{f^*} \ar@{->>}[d] & \NS(Y)_\R \\
\NS(X)_W. & }
\end{equation}

Let $D$ be a class of Weil divisor such that $f^*D$ is numerically trivial on $Y$. We will show that $D$ is nef. Since the same reasoning on $-D$ shows that $D$ is anti-nef, we will be done. Let $B$ be any ample $\Q$-Cartier divisor on $Y$. Since $f^*D$ is nef, $f^*D+B$ is ample. By \ref{lm:kleiman4.1}, $D+f_*B$ is nef. Since nefness is a closed property, $D$ is nef.
\end{proof}

\begin{rk} Since $\Div(Y)_\R\rightarrow\NS(Y)_\R$ is surjective, $\Div(X)_\R/N\rightarrow\NS(Y)_\R$ is also surjective, which implies that $\Div(X)_\R/N\cong\NS(Y)_\R$. In particular, there is no natural map $\NS(X)_W\rightarrow\NS(Y)_W$, but the pushforward induces a natural surjective map $\NS(Y)_W\rightarrow\NS(X)_W$.
\end{rk}

\begin{cor}\label{cor:surjectionNSW} Let $f:Y\rightarrow X$ be a small, birational, projective morphism of normal, projective varieties over a field of characteristic $0$, with $X$ having log terminal singularities. The pushforward induces a surjective morphism
$$
f_*:\NS(Y)_W\twoheadrightarrow\NS(X)_W.
$$
\end{cor}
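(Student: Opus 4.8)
The plan is to establish two things: that the cycle-theoretic pushforward descends to a well-defined $\R$-linear map $\NS(Y)_W\to\NS(X)_W$, and that this map is surjective. Both are short consequences of results already in hand, and the only point needing a moment's care is well-definedness.

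First I would note that, by Lemma \ref{lm:QCart still lt}, $Y$ again has log terminal singularities, so that Corollary \ref{cor:pushforwardnef} is available for $f$. Since $f_*\colon\Div(Y)_\R\to\Div(X)_\R$ is $\R$-linear, to see that it descends it suffices to check that it carries the subspace $N_Y$ of numerically trivial $\R$-Weil divisors into $N_X$. If $D\in N_Y$, then $D$ and $-D$ are both nef on $Y$; by Corollary \ref{cor:pushforwardnef}, $f_*D$ and $f_*(-D)=-f_*D$ are then both nef on $X$, so $f_*D\in N_X$. Using the identifications $\NS(Y)_W=\Div(Y)_\R/N_Y$ and $\NS(X)_W=\Div(X)_\R/N_X$ (as in the proof of \ref{thm:NSWfg}), this gives the induced map $f_*\colon\NS(Y)_W\to\NS(X)_W$.

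For surjectivity I would use strict transforms. Because $f$ is small, it is an isomorphism over an open $V\subseteq X$ with $\codim(X\setminus V)\geq2$; every prime divisor on $X$ meets $V$ and therefore has a well-defined strict transform on $Y$, yielding a map $f^{-1}_*\colon\Div(X)_\R\to\Div(Y)_\R$ with $f_*\circ f^{-1}_*=\id$. Hence for any class $[D]\in\NS(X)_W$ we have $f_*[f^{-1}_*D]=[D]$, so $f_*\colon\NS(Y)_W\to\NS(X)_W$ is surjective.

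I do not foresee a genuine obstacle: the argument rests entirely on Corollary \ref{cor:pushforwardnef} (itself built on Lemma \ref{lm:kleiman4.1}, Lemma \ref{lm:amplecone=interiornefcone}, and Lemma \ref{lm:QCart still lt}) together with the elementary bijection between prime divisors induced by a small birational morphism. The one subtlety worth stressing is that numerical triviality must be transported under pushforward, which is exactly why Corollary \ref{cor:pushforwardnef} is applied to both $D$ and $-D$; and smallness is genuinely used, since for non-small birational morphisms it is not even known whether the pushforward of a nef divisor is nef (cf. Question \ref{qt:pushforwardnef}).
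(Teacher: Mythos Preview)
Your argument is correct and is essentially the paper's own approach. The paper simply observes that the diagram \eqref{eq:f*surj} from the proof of Theorem \ref{thm:NSWfg} works verbatim for any small projective birational $f$ (with $\NS(Y)_W$ in place of $\NS(Y)_\R$); unwinding that diagram amounts precisely to your two steps, namely that $f_*$ carries $N_Y$ into $N_X$ (which is Corollary \ref{cor:pushforwardnef} applied to $D$ and $-D$, exactly as you do) and that surjectivity follows from $f_*\circ f^{-1}_*=\id$ on divisors.
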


\begin{proof} The diagram in \eqref{eq:f*surj} applies to any $f:Y\rightarrow X$ (small, projective, birational), with $\NS(Y)_W$ instead of $\NS(Y)_{\R}$ if $Y$ is not $\Q$-factorial.
\end{proof}

\begin{rk} The above map is not injective in general. For example, if $D$ is a numerically trivial non-$\Q$-Cartier divisor on $X$, and $f:Y=\Proj_X\Rscr(X,D)\rightarrow X$, then $f^{-1}_*D$ is $f$-ample, and thus not trivial in $\NS(Y)_\R$. However, $f_*(f^{-1}_*D)=D$, which is $0$ in $\NS(X)_W$.
\end{rk}

\begin{df} Let $g:X\rightarrow U$ be a projective morphism of normal, quasi-projective varieties over an algebraically closed field of characteristic $0$, with $X$ having log terminal singularities. The \emph{relative Weil Picard number} is $\rho_W(X/U):=\dim_\R\NS(X/U)_W$. We will write $\rho_W(X)$ for $\rho_W(X/\Spec k)$, and we will simply call it the \emph{Weil Picard number}.
\end{df}

\begin{rk} Proposition \ref{prop:NS<NSW} implies
\begin{equation}
\rho(X)\leq\rho(X)_W.
\end{equation}
\end{rk}

The next result is crucial for the study of positivity on non-$\Q$-factorial singularities.

\begin{thm}\label{thm:nef<=>} Let $X$ be a (normal) projective variety over an algebraically closed field of characteristic $0$ and with log terminal singularities, and let $f:Y\rightarrow X$ be a small, projective, birational map. A Weil $\R$-divisor $D$ on $Y$ is nef if and only if $D$ is $f$-nef and $f_*D$ is nef.
\end{thm}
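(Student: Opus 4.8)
The plan is to prove the two implications separately; the forward one is a short consequence of the material already developed, while the converse rests on a decomposition trick. Throughout I will use that $Y$ again has log terminal singularities by \ref{lm:QCart still lt}, and --- since all the notions involved are defined for $\Q$-Weil divisors and cut out closed cones --- I will write the argument for a $\Q$-Weil divisor $D$, the $\R$-Weil case following by the usual density and closedness arguments (cf. \ref{lm:amplecone=interiornefcone}). The one new ingredient I would isolate is an elementary lemma: \emph{if $\Fscr$ is a coherent sheaf on $Y$ which is relatively globally generated over $X$ (i.e. $f^*f_*\Fscr\to\Fscr$ is onto) and $f_*\Fscr$ is globally generated on $X$, then $\Fscr$ is globally generated on $Y$}. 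Indeed, pulling back the surjection $H^0(X,f_*\Fscr)\otimes\regsh_X\twoheadrightarrow f_*\Fscr$ and composing with $f^*f_*\Fscr\twoheadrightarrow\Fscr$ gives $H^0(X,f_*\Fscr)\otimes\regsh_Y\twoheadrightarrow\Fscr$, and $H^0(X,f_*\Fscr)=H^0(Y,\Fscr)$ tautologically.

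For the forward direction, suppose $D$ is nef on $Y$. Then $f_*D$ is nef on $X$ by \ref{cor:pushforwardnef}. For $f$-nefness, let $A$ be any $f$-ample $\Q$-Cartier divisor on $Y$ and $L$ a very ample Cartier divisor on $X$; for $N\gg0$ the divisor $A+Nf^*L$ is ample and $\Q$-Cartier on $Y$, so $D+(A+Nf^*L)$ is agg by the definition of nefness. Since $f^*L$ is Cartier, $\regsh_Y(m(D+A))\cong\regsh_Y(m(D+A+Nf^*L))\otimes f^*\regsh_X(-mNL)$ for $m$ sufficiently divisible, and tensoring a relatively globally generated sheaf by the pullback of a line bundle preserves relative global generation (projection formula); hence $D+A$ is $f$-agg, so $D$ is $f$-nef.

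For the converse, suppose $D$ is $f$-nef and $f_*D$ is nef; I must show $D+A$ is agg for an arbitrary ample $\Q$-Cartier divisor $A$ on $Y$. Since the ample cone of $Y$ is open, I can pick a very ample Cartier divisor $B$ on $X$ and a small rational $\delta>0$ with $A_1:=A-\delta f^*B$ still ample (and $\Q$-Cartier) on $Y$; set $B_1:=\delta B$, so that $A=A_1+f^*B_1$ with $A_1$ ample on $Y$ and $B_1$ ample on $X$, and for $m$ sufficiently divisible $\regsh_Y(m(D+A))\cong\regsh_Y(m(D+A_1))\otimes f^*\regsh_X(mB_1)$. On the one hand, $A_1$ is $f$-ample, so $D+A_1$ is $f$-agg (as $D$ is $f$-nef), whence $\regsh_Y(m(D+A_1))$, and therefore $\regsh_Y(m(D+A))$, is relatively globally generated over $X$. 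On the other hand, by the projection formula and \ref{lm:pushforwardsheaves}, $f_*\regsh_Y(m(D+A))\cong f_*\regsh_Y(m(D+A_1))\otimes\regsh_X(mB_1)\cong\regsh_X\big(m(f_*D+f_*A_1+B_1)\big)$; here $f_*D$ is nef by hypothesis and $f_*A_1$ is nef by \ref{lm:kleiman4.1}, hence $f_*D+f_*A_1$ is nef by \ref{lm:nefnessadditive}, and as $B_1$ is ample $\Q$-Cartier on $X$ the divisor $(f_*D+f_*A_1)+B_1$ is agg by the definition of nefness, i.e. $f_*\regsh_Y(m(D+A))$ is globally generated on $X$ for $m$ sufficiently divisible. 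The isolated lemma now gives that $\regsh_Y(m(D+A))$ is globally generated on $Y$ for $m$ sufficiently divisible, i.e. $D+A$ is agg. As $A$ was arbitrary, $D$ is nef.

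The main obstacle, and the motivation for writing $A=A_1+f^*B_1$, is the tension between the two hypotheses: to use $f$-nefness of $D$ one must add an $f$-ample divisor to $D$, whereas to use nefness of $f_*D$ and to make \ref{lm:pushforwardsheaves} compute $f_*\regsh_Y(\,\cdot\,)$ explicitly one must add a divisor pulled back from $X$; openness of the ample cone lets one do both simultaneously. (A minor subsidiary point is the passage from $\Q$-Weil to $\R$-Weil divisors, handled as indicated above.)
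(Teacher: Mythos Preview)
Your proof is correct and follows essentially the same approach as the paper's: both rest on the observation that a sheaf which is relatively globally generated over $X$ and whose pushforward is globally generated on $X$ is itself globally generated, combined with the projection formula and \ref{lm:pushforwardsheaves} to identify $f_*\regsh_Y(m(\,\cdot\,))$. The only organizational difference is that the paper adds an auxiliary ample $f^*H$ from $X$, shows $D+f^*H$ is nef for every such $H$, and then passes to the limit using closedness of nefness, whereas your decomposition $A=A_1+f^*B_1$ carves the needed pullback out of the given ample $A$ and reaches $D+A$ agg directly, avoiding the limiting step.
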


\begin{proof} Notice that $Y$ has log terminal singularities, by \ref{lm:QCart still lt}. Clearly if $D$ is nef, it is relatively nef. By \ref{cor:pushforwardnef}, $f_*D$ is nef.

Let us prove the converse. Without loss of generality, we can assume that $D$ is a $\Q$-divisor. Let $A$ be an ample $\Q$-Cartier divisor on $Y$, which in particular will be $f$-ample. Since $A$ is ample, $f_*A$ is nef, which implies that $f_*D+f_*A$ is nef as well. Let $H$ be an ample $\Q$-Cartier divisor on $X$. By definition of nefness, for all $m$ sufficiently divisible, $\regsh_X(m(f_*D+f_*A+H))$ is globally generated. Since $D$ is $f$-ample, for $m$ sufficiently divisible $\regsh_Y(m(D+A))$ is relatively globally generated, which in turn implies that $\regsh_Y(m(D+A+f^*H))$ is relatively globally generated as well ($mf^*H$ is $f$-trivial). This means that we have a surjection
$$
f^*f_*\regsh_Y(m(D+A+f^*H))\twoheadrightarrow\regsh_Y(m(D+A+f^*H)).
$$
By the projection formula,
\begin{eqnarray*}
f_*\regsh_Y(m(D+A+f^*H))&\cong& f_*\regsh_Y(m(D+A))\otimes\regsh_X(mH)\cong\\
&\cong&\regsh_X(m(f_*D+f_*A))\otimes\regsh_X(mH)\cong\\
&\cong&\regsh_X(m(f_*D+f_*A+H)),
\end{eqnarray*}
by \ref{lm:pushforwardsheaves}. We have already observed how this sheaf is globally generated, hence so is its pullback $f^*f_*\regsh_Y(m(D+A+f^*H))$. Thus, $\regsh_Y(m(D+A+f^*H))$ is globally generated. Since this is true for any ample $\Q$-Cartier $A$ and $m$ sufficiently divisible, $D+f^*H$ is nef. This is true for every $H$ ample and, since nefness is a closed property, \ref{lm:amplecone=interiornefcone}, $D$ is nef. 
\end{proof}

\begin{qt}\label{qt:nefandrelativenefness} As for \ref{qt:pushforwardnef}, it would be interesting to know if such a result holds without the assumption of $f$ being small.
\end{qt}

\begin{cor}\label{cor:sumofNS} Let $X$ be a (normal) projective variety over an algebraically closed field of characteristic $0$ having log terminal singularities, and let $f:Y\rightarrow X$ be a small, projective, birational map. We have
\begin{equation}\label{eq:sumofNS}
\rho_W(Y)\leq\rho_W(X)+\rho_W(Y/X).
\end{equation}
\end{cor}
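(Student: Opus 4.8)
The goal is Corollary \ref{cor:sumofNS}, the inequality
$$
\rho_W(Y)\leq\rho_W(X)+\rho_W(Y/X),
$$
for $f:Y\rightarrow X$ small, projective, birational, with $X$ (hence $Y$, by \ref{lm:QCart still lt}) having log terminal singularities. The plan is to exhibit a linear map from $\NS(Y)_W$ to $\NS(X)_W\oplus\NS(Y/X)_W$ and prove it is injective; then the inequality of dimensions follows. Since $\NS(Y)_W=\Div(Y)_\R/N_Y$ with $N_Y$ the numerically trivial classes, I would first check that the two maps $\Div(Y)_\R\rightarrow\NS(X)_W$ (pushforward followed by quotient; this is well-defined on classes by \ref{cor:surjectionNSW}) and $\Div(Y)_\R\rightarrow\NS(Y/X)_W$ (the quotient by $f$-numerically trivial classes) together descend to $\NS(Y)_W$. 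For that I only need that a class $D\in N_Y$ — i.e.\ $D$ and $-D$ both nef on $Y$ — maps to $0$ in each factor: in $\NS(Y/X)_W$ this is immediate since nef implies $f$-nef, and in $\NS(X)_W$ it follows from \ref{cor:pushforwardnef}, which gives that $f_*D$ and $f_*(-D)=-f_*D$ are both nef, so $f_*D\in N_X$.

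The injectivity is exactly where Theorem \ref{thm:nef<=>} does the work. Suppose $D\in\Div(Y)_\R$ maps to $0$ in both $\NS(X)_W$ and $\NS(Y/X)_W$. The second condition says $D$ is $f$-numerically trivial, in particular both $D$ and $-D$ are $f$-nef. The first condition says $f_*D$ is numerically trivial on $X$, so both $f_*D$ and $-f_*D=f_*(-D)$ are nef. By Theorem \ref{thm:nef<=>} applied to $D$ (which is $f$-nef with $f_*D$ nef) we get $D$ nef; applied to $-D$ we get $-D$ nef. Hence $D$ is numerically trivial on $Y$, i.e.\ $D\in N_Y$, so its class in $\NS(Y)_W$ is $0$. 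This shows the induced map
$$
\NS(Y)_W\hookrightarrow\NS(X)_W\oplus\NS(Y/X)_W
$$
is injective, and taking $\dim_\R$ gives the claimed inequality; all three spaces are finite dimensional by \ref{thm:NSWfg}.

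I expect no serious obstacle here — the corollary is essentially a repackaging of Theorem \ref{thm:nef<=>} together with \ref{cor:pushforwardnef}. The one point deserving a line of care is the passage from $\Q$-divisors to $\R$-divisors: Theorem \ref{thm:nef<=>} is stated for $\R$-Weil divisors (its proof reduces to the $\Q$-case), and the definition of numerically trivial is given for $\Q$-Weil divisors but extended to $\R$ by tensoring, so I would simply note that the above argument is carried out on $\R$-divisor classes directly, using the $\R$-statement of \ref{thm:nef<=>} and the fact that nefness is a closed (hence $\R$-linearly sensible) condition by \ref{lm:amplecone=interiornefcone}. Beyond that, the proof is the three-line diagram chase above.
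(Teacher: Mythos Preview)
Your proof is correct and follows essentially the same approach as the paper: both use Theorem \ref{thm:nef<=>} to conclude that a divisor on $Y$ is numerically trivial if and only if it is $f$-numerically trivial and its pushforward is numerically trivial, which translates to $\ker f_*\cap\ker\pi=\{0\}$ for the maps $f_*:\NS(Y)_W\to\NS(X)_W$ and $\pi:\NS(Y)_W\to\NS(Y/X)_W$, yielding the dimension inequality. Your write-up is slightly more explicit in checking that these maps descend to $\NS(Y)_W$, but the argument is the same.
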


\begin{proof} By the previous lemma, a divisor $D$ on $Y$ is numerically trivial if and only if $f_*D$ is numerically trivial on $X$ and $D$ is $f$-numerically trivial. If $f_*:\NS(Y)_W\twoheadrightarrow\NS(X)_W$ is the pushforward and $\pi:\NS(Y)_W\twoheadrightarrow\NS(Y/X)_W$ is the natural quotient map, the previous statement becomes
\begin{equation}\label{eq:capker}
\ker f_*\cap\ker\pi=\{0\},
\end{equation}
which implies \eqref{eq:sumofNS}.
\end{proof}

\vspace{2ex}

We conclude this section by relating the notion of numerically trivial introduced here with the one due to Fulton, \cite[\S19]{fulton}. In \cite[19.1]{fulton}, a $k$-cycle $\alpha$ on a complete scheme $X$ is defined to be \emph{numerically equivalent to zero} if $\int_X P\cap\alpha=0$ for all polynomials $P$ in Chern classes of vector bundles on $X$. The space $N_kX:=Z_kX/\mathrm{Num}_kX$ is the space of all $k$-cycles modulo the group of $k$-cycles numerically equivalent to zero. The following lemma was suggested by M. Fulger.

\begin{lm}\label{lm:mihai} Let $X$ be a normal projective variety over an algebraically closed field and let $D$ be a nef and anti-nef Weil divisor; then $D$ is numerically equivalent to zero, in the sense of \cite{fulton}.
\end{lm}

\begin{proof} Since $D$ is nef, it is pseudo-effective. Let $H$ be a general ample divisor. Since $H$ is general, we can restrict $D$ to $H$ and $D\big|_H$ is still pseudo-effective. Thus, the class intersection $[D]\cap [H]^{n-1}$ is still pseudo-effective. The same is true for $-D$. Hence $[D]\cap [H]^{n-1}=0$. By \cite[3.16]{mihai}, $D$ is numerically equivalent to zero (in the sense of \cite{fulton}).
\end{proof}

\begin{cor} Let $X$ be a normal projective variety over an algebraically closed field of characteristic $0$ having log terminal singularities. There is a natural surjection $\NS(X)_W\twoheadrightarrow N_{n-1}(X)$. In particular, $N_{n-1}(X)$ is finitely dimensional.
\end{cor}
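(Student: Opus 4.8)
The plan is to build the map tautologically, using that an $(n-1)$-dimensional subvariety of an $n$-dimensional $X$ is the same thing as a prime divisor. Thus the group $Z_{n-1}(X)$ of $(n-1)$-cycles is canonically identified with $\Div(X)$, and composing with Fulton's quotient $Z_{n-1}(X)\twoheadrightarrow N_{n-1}(X)$ and tensoring with $\R$ gives a surjection $q\colon\Div(X)_\R\twoheadrightarrow N_{n-1}(X)_\R$, where $N_{n-1}(X)_\R:=N_{n-1}(X)\otimes_\Z\R$. The goal is then to show $q$ descends to $\NS(X)_W$.

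To do this I would check that $q$ vanishes on the subspace of numerically trivial $\R$-Weil divisors. By definition $\NS(X)_W=\NS(X)_{W,\Q}\otimes_\Q\R$, and since $\R$ is flat over $\Q$ this subspace equals $N_{X,\Q}\otimes_\Q\R$, where $N_{X,\Q}$ is the group of nef and anti-nef $\Q$-Weil divisors; hence by $\R$-linearity it suffices to see that $q$ kills $N_{X,\Q}$. Given $D\in N_{X,\Q}$, pick $m\in\Z_{>0}$ with $mD$ integral; since nefness of a Weil $\Q$-divisor is preserved under multiplication by positive integers (global generation of $\regsh_X(kD')$ for $k$ sufficiently divisible gives the same for $\regsh_X(k(mD'))$), $mD$ is a nef and anti-nef Weil divisor, so \ref{lm:mihai} gives $mD\in\mathrm{Num}_{n-1}(X)$, i.e. $q(mD)=0$, whence $q(D)=\tfrac1m q(mD)=0$. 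Therefore $q$ factors as a surjection $\bar q\colon\NS(X)_W\twoheadrightarrow N_{n-1}(X)_\R$, which is natural as it is induced by the canonical identification $Z_{n-1}(X)=\Div(X)$. Finally, $\NS(X)_W$ is finite dimensional by \ref{thm:NSWfg}, so its quotient $N_{n-1}(X)_\R$ is finite dimensional as well; in particular $N_{n-1}(X)$ has finite rank.

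The only genuinely delicate point is the coefficient bookkeeping: \ref{lm:mihai} is stated for honest integral Weil divisors, and one must be sure this is enough to annihilate the full real subspace of numerically trivial divisors. This is exactly where the defining identity $\NS(X)_W=\NS(X)_{W,\Q}\otimes_\Q\R$ is used, together with the (easy) positive-homogeneity of nefness; everything else is a formal verification and no geometric input beyond \ref{lm:mihai} and \ref{thm:NSWfg} is required.
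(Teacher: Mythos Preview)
Your argument is correct and is exactly the intended one: the paper states this corollary without proof, as an immediate consequence of Lemma~\ref{lm:mihai} together with the finite dimensionality of $\NS(X)_W$ (Theorem~\ref{thm:NSWfg}), and your write-up simply unpacks that implication. The only point worth noting is that the paper writes $N_{n-1}(X)$ where you write $N_{n-1}(X)_\R$; your care with the $\Q$--$\R$ bookkeeping is warranted and makes explicit what the paper leaves implicit.
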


Let us recall that the finite dimensionality of all spaces $N_k(X)$ is proven in \cite[19.1.4]{fulton}.

\section{The Weil nef cone and the cone of Weil curves}\label{sect:cones}

\begin{df}\label{df:Nefcone} Let $X$ be a projective variety with log terminal singularities (over a field of characteristic $0$). We define the \emph{Weil ample cone} (resp. the \emph{Weil nef cone}, resp. the \emph{Weil big cone}, resp. the \emph{Weil pseudo-effective one}) to be the subset of $\NS(X)_W$ generated by the classes of ample (resp. nef, resp. big, resp. pseff) $\Q$-Weil divisors, and it will be denoted by $\Amp(X)_W$ (resp. $\Nef(X)_W$, resp. $\mathrm{Big}(X)_W$, resp. $\mathrm{Pseff}(X)_W$).

Similarly, we define the relative cones $\Nef(X/U)_W$, $\Amp(X/U)_W$.
\end{df}

\begin{rk} By \ref{lm:amplecone=interiornefcone}, amplitude is numerically invariant. Similarly, it is easy to show that bigness and pseudo-effectivity are numerically invariant, so the above definitions make sense.
\end{rk}

These sets are indeed cones and $\Amp(X)_W$ and $\Nef(X)_W$ do not contain lines. Moreover, by \ref{lm:amplecone=interiornefcone},
\begin{eqnarray}\label{eq:nefclosed}
\Nef(X/U)_W=\overline{\Amp(X/U)_W}
\end{eqnarray}
and $\Amp(X/U)_W=\Int(\Nef(X/U)_W)$ (and $\mathrm{Pseff}(X)_W=\overline{\mathrm{Big}(X)_W}$). They are also full dimensional, in the sense that every $\R$-Weil divisor is the difference of two nef/ample $\R$-Weil divisors which is an immediate consequence of \ref{lm:ample+anything}.

\begin{ex}\label{ex:cone1}\label{ex:boundaryandnefness} Let us consider the quadric cone $X=\{xy-zw=0\}$ in $\Pspace^4_\C$. Every Weil divisor is $\Q$-linearly equivalent to a cone $C_D$ over a divisor $D$ in $\Pspace^1_\C\times\Pspace^1_\C$. Thus $\NS(X)_W\cong\R^2$. Moreover, if $D$ is of type $(a,b)$, $C_D$ is nef if and only if $a,b\geq0$ and $C_D$ is ample if and only if $a,b>0$, \cite[2.19]{wStefano2}. Thus the nef cone is $\Nef(X)_W=\{(x,y)\,|\,x,y\geq0\}$ and $\Amp(X)_W=\{(x,y)\,|\,x,y>0\}$. The only Cartier divisors on $X$ are $\Q$-linearly equivalent to the cone over a divisor of type $(1,1)$. Hence, in this case, $\Nef(X)=\{(x,x)\,|\,x\geq0\}$ and $\Amp(X)=\{(x,x)\,|\,x>0\}$. We have the following picture:
$$
\setlength{\unitlength}{0.5cm}
\begin{picture}(10,11)
\put(0,0){\includegraphics[width=5cm,height=5cm]{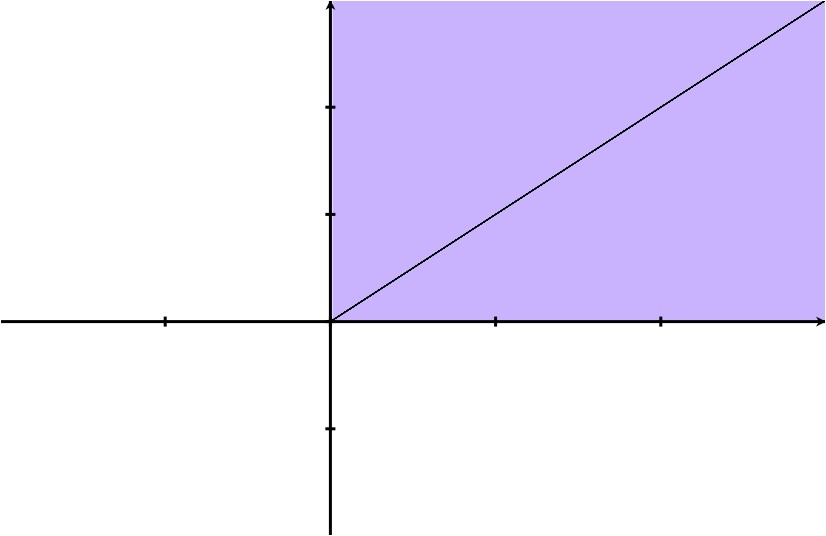}}
\put(8,8){$\displaystyle \Nef(X)$}
\put(8,6){$\Nef(X)_W$}
\put(9,3.3){$C_{(1,0)}$}
\put(2.2,9.3){$C_{(0,1)}$}
\end{picture}
$$

We have $\mathrm{Big}(X)_W=\Amp(X)_W$ and $\mathrm{Pseff}(X)_W=\Nef(X)_W$. In particular, the divisor $C_{(0,-1)}$ is not nef. However, if a divisor $\Delta$ is effective, it must be numerically equivalent to a divisor $C_{(a,b)}$ with $a,b\geq0$. Thus, for any effective Weil divisor $\Delta$, such that $C_{(0,-1)}+\Delta$ is $\Q$-Cartier, $C_{(0,-1)}+\Delta$ is nef (the nearest $\Q$-Cartier divisor possible being $C_{(0,0)}\equiv 0$).

What is happening is the following. It is true that a neighborhood of $D$ in $\NS(X)_W$ avoids the nef cone; however, the $\R$-Cartier divisors are not dense, so we hit the nef cone when we try to modify the Weil divisor with an effective boundary to make it $\Q$-Cartier.
\end{ex}

\begin{rk} The previous example shows that the technique used to obtain contractions in \cite[5.8]{wStefano2} is not satisfactory in general.
\end{rk}

Using duality, we can define the cone of Weil curves.

\begin{df}\label{df:NEcone} Let $X$ be a projective variety  over a field of characteristic $0$ and with log terminal singularities. We define the $N_1(X)_W:=\NS(X)_W^*$, and the \emph{cone of Weil curves} $\NEbar(X)_W:=\Nef(X)_W^*$ (one is dual as vector spaces, the other as cones).

Similarly, if $X\rightarrow U$ is a projective morphism of quasi-projective varieties over an algebraically closed field of characteristic $0$, with $X$ having log terminal singularities, we define $N_1(X/U)_W:=\NS(X/U)_W^*$ and $\NEbar(X/U)_W:=\Nef(X/U)_W^*$.
\end{df}

\begin{rk} The cone $\NEbar(X)_W$ is still closed. Moroever, since these cones are finite dimensional, $\NEbar(X)_W^*\cong\Nef(X)_W$.
\end{rk}

\begin{cor}\label{cor:NEW>NE} Let $X$ be a projective variety with log terminal singularities over an algebraically closed field of characteristic $0$. There is a natural inclusion $\Nef(X)\subseteq\Nef(X)_W$ which induces a natural surjection $\NEbar(X)_W\twoheadrightarrow\NEbar(X)_\R$.
\end{cor}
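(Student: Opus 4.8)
The plan is to obtain the whole statement by dualizing the injection $\iota\colon\NS(X)_\R\hookrightarrow\NS(X)_W$ of proposition \ref{prop:NS<NSW}. First I would record that $\iota$ maps $\Nef(X)$ into $\Nef(X)_W$: if $H$ is an ample $\Q$-Cartier divisor then some multiple is very ample, hence $\regsh_X(mH)$ is globally generated for $m$ sufficiently divisible, so for every ample $\Q$-Cartier $A$ the sheaf $\regsh_X(m(H+A))$ is globally generated for $m$ sufficiently divisible, i.e. $H$ is nef as a Weil divisor; since $\Nef(X)$ is the closure of the set of such classes and $\Nef(X)_W$ is closed (\ref{lm:amplecone=interiornefcone}, cf.\ \eqref{eq:nefclosed}), we get $\iota(\Nef(X))\subseteq\Nef(X)_W$. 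As $\iota$ is an injection of finite-dimensional real vector spaces (finiteness of $\NS(X)_W$ by \ref{thm:NSWfg}), the dual map $\iota^*\colon N_1(X)_W\to N_1(X)_\R$ is surjective, and the inclusion just proved shows that $\iota^*$ carries $\NEbar(X)_W=\Nef(X)_W^*$ into $\NEbar(X)_\R=\Nef(X)^*$. This produces the asserted natural map; the content is that it is onto.

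For surjectivity I would sharpen the above inclusion to the equality $\Nef(X)=\Nef(X)_W\cap\iota(\NS(X)_\R)$. The one nonformal inclusion is that a $\Q$-Cartier divisor $D$ whose class lies in $\Nef(X)_W$ is nef in the usual sense: by the definition of nefness $\regsh_X(m(D+A))$ is globally generated for every ample $\Q$-Cartier $A$ and all $m$ sufficiently divisible, and for a $\Q$-Cartier divisor this makes $D+A$ semiample, hence nef, so $D$ is nef after letting $A\to0$ (alternatively this is immediate from \ref{thm:QCart positivity}, the $\Q$-Cartierization of a $\Q$-Cartier divisor being the identity). Granting this, a bipolar computation identifies the dual cone of $\iota^*(\NEbar(X)_W)$ inside $\NS(X)_\R$: for $v\in\NS(X)_\R$, the class $v$ pairs nonnegatively with every element of $\iota^*(\NEbar(X)_W)$ if and only if $\iota v\in(\NEbar(X)_W)^*=\Nef(X)_W$, which by the equality above holds if and only if $v\in\Nef(X)$. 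Taking duals again gives $\overline{\iota^*(\NEbar(X)_W)}=\Nef(X)^*=\NEbar(X)_\R$.

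The remaining and main point is to discard this closure, i.e.\ to see that $\iota^*(\NEbar(X)_W)$ is already closed. I would first check that $\ker\iota^*\cap\NEbar(X)_W=\{0\}$: an element $\phi\in\NEbar(X)_W$ vanishing on $\iota(\NS(X)_\R)$ vanishes in particular at the class of an ample Cartier divisor $H$; but an ample $\Q$-Cartier divisor is also an ample $\Q$-Weil divisor, so $[H]\in\Amp(X)_W=\Int\Nef(X)_W$, and a linear functional that is nonnegative on the full-dimensional closed cone $\Nef(X)_W$ and vanishes at an interior point of it must be identically zero. Since $\iota^*$ is a surjective linear map of finite-dimensional spaces and $\NEbar(X)_W$ is a closed convex cone meeting $\ker\iota^*$ only at the origin, a routine normalization argument shows that $\iota^*(\NEbar(X)_W)$ is closed; combined with the previous step this yields $\iota^*(\NEbar(X)_W)=\NEbar(X)_\R$. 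I expect this last closedness assertion to be the only genuinely delicate point: the cones $\Nef(X)_W$ and $\NEbar(X)_W$ need not be polyhedral, so surjectivity onto $\NEbar(X)_\R$ does not follow formally from the vector-space duality, and one really has to feed in that an ample Cartier class lies in the interior of the Weil nef cone.
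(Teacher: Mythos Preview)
Your argument is correct, and it is considerably more detailed than the paper's own proof, which consists of the single sentence ``The natural inclusion $\Nef(X)\subseteq\Nef(X)_W$ is the restriction of $\NS(X)_\R\subseteq\NS(X)_W$, \ref{prop:NS<NSW}.'' In other words, the paper only records the inclusion of nef cones and treats the surjection $\NEbar(X)_W\twoheadrightarrow\NEbar(X)_\R$ as an immediate consequence of duality, without further comment.

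You have identified precisely the point the paper elides: the dual of an injection of vector spaces is a surjection, and the inclusion of cones ensures $\iota^*(\NEbar(X)_W)\subseteq\NEbar(X)_\R$, but surjectivity onto the closed cone $\NEbar(X)_\R$ is not purely formal. Your two extra ingredients --- the equality $\Nef(X)=\Nef(X)_W\cap\iota(\NS(X)_\R)$ (which amounts to the compatibility statement in \ref{thm:QCart positivity}) and the observation that an ample Cartier class lies in the interior of $\Nef(X)_W$, forcing $\ker\iota^*\cap\NEbar(X)_W=\{0\}$ and hence closedness of the image --- are exactly what is needed to make this rigorous. So your route and the paper's are the same in spirit (dualize \ref{prop:NS<NSW}); you have simply supplied the convex-geometry verification that the paper leaves to the reader.
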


\begin{proof} The natural inclusion $\Nef(X)\subseteq\Nef(X)_W$ is the restriction of $\NS(X)_\R\subseteq\NS(X)_W$, \ref{prop:NS<NSW}.
\end{proof}

\begin{lm}\label{lm:ample+anything} Let $f:X\rightarrow U$ be a projective morphism of normal, quasi-projective varieties over an algebraically closed field of characteristic $0$, with $X$ having log terminal singularities. For any Weil divisor $D$ and ample $f$-Cartier divisor $A$, there exists $b>0$ such that $bA+D$ is $f$-ample.
\end{lm}

\begin{proof} The statement is local on the base, so we can assume that $U$ is affine. Let $b'$ such that $m(b'A+D)$ is globally generated for some $m\gg0$ (it is enough for $m$ to be divisible by the all the degrees of the generators of $\Rscr(X,D)$). Since all the algebras of local sections are finitely generated, this implies that $b'A+D$ is agg. But then $(b'+1)A+D=(b'A+D)+A$ is ample.
\end{proof}

\begin{lm}\label{lm:pushforwardample} Let $f:Y\rightarrow X$ be a small, birational, projective morphism of normal, projective varieties over an algebraically closed field. For an $\R$-Weil ample divisor $A$ on $Y$, $f_*A$ is ample.
\end{lm}

\begin{proof} As in \ref{lm:kleiman4.1}, it is enough to prove it for $\Q$-Cartier $\Q$-divisors: if $g:Z\rightarrow Y$ is the $\Q$-Cartierization of $A$, $A_Z:=g^{-1}_*A$ is ample, \ref{thm:QCart positivity}, and $(f\circ g)_*(A_Z)=f_*A$.

Let $L$ be a $\Q$-Cartier ample divisor on $X$. Since $A$ is ample, for $0<\e\ll1$, $A-\e f^*L$ is ample. But then $f_*A-\e L=f_*(A-\e f^*L)$ is nef by \ref{lm:kleiman4.1}. Thus $f_*A$ is ample.
\end{proof}

\begin{cor}\label{cor:ample<=>} Let $f:Y\rightarrow X$ be a small, birational, projective morphism of normal, projective varieties over a field of characteristic $0$, with $X$ having log terminal singularities. An $\R$-Weil divisor $A$ on $Y$ is ample if and only if it is $f$-ample and $f_*A$ is ample.
\end{cor}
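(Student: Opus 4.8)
The plan is to reduce the statement to the corresponding \emph{nef} statement, Theorem \ref{thm:nef<=>}, together with the description of ampleness as the interior of the nef cone provided by Lemma \ref{lm:amplecone=interiornefcone}. First I note that $Y$ has log terminal singularities by Lemma \ref{lm:QCart still lt}, so that Theorem \ref{thm:nef<=>}, Lemma \ref{lm:amplecone=interiornefcone}, Lemma \ref{lm:kleiman4.1} and Lemma \ref{lm:pushforwardample} are all available on $Y$, and the corresponding statements hold on $X$; this is the only place the log terminal hypothesis enters.

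For the forward implication, suppose $A$ is ample on $Y$. Then $f_*A$ is ample by Lemma \ref{lm:pushforwardample}. To see that $A$ is $f$-ample, fix an ample $\Q$-Cartier divisor $H$ on $Y$, which is in particular $f$-ample. Since $A$ lies in the interior of the Weil nef cone of $Y$, Lemma \ref{lm:amplecone=interiornefcone} gives that $A-\e H$ is nef, hence $f$-nef, for all sufficiently small $\e>0$. Writing $A=(A-\e H)+\e H$ as the sum of an $f$-nef divisor and an $f$-ample $\Q$-Cartier divisor and applying the relative part of Lemma \ref{lm:amplecone=interiornefcone} again yields that $A$ is $f$-ample.

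For the converse, assume $A$ is $f$-ample and $f_*A$ is ample. Fix an ample $\Q$-Cartier divisor $H$ on $Y$; then $f_*H$ is nef on $X$ by Lemma \ref{lm:kleiman4.1}. Since being $f$-ample is an open condition (the $f$-ample cone is the interior of the relative Weil nef cone), $A-\e H$ is still $f$-ample, in particular $f$-nef, for small $\e>0$. On the other hand $f_*(A-\e H)=f_*A-\e f_*H$ is a perturbation of the ample class $f_*A$ by the fixed class $f_*H$, hence stays in the open ample cone of $X$ for small $\e>0$, and in particular is nef. Choosing a single $\e>0$ small enough for both conditions, Theorem \ref{thm:nef<=>} shows that $A-\e H$ is nef; then $A=(A-\e H)+\e H$ is nef plus an ample $\Q$-Cartier divisor, so by Lemma \ref{lm:amplecone=interiornefcone} it lies in the interior of the Weil nef cone of $Y$, i.e. $A$ is ample.

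I expect the only delicate point to be the synchronization in the converse: one must check that a single $\e$ simultaneously keeps $A-\e H$ both $f$-nef and pushing forward to a nef (indeed ample) class. This uses the openness of the ample cones, both the relative one on $Y$ and the absolute one on $X$, together with the fact that $f_*H$ is at least nef (Lemma \ref{lm:kleiman4.1}) so that the perturbation $-\e f_*H$ does not leave the ample cone of $X$. As usual it suffices to treat $\Q$-divisors, the $\R$-case following by the density and closedness statements already in Lemma \ref{lm:amplecone=interiornefcone}; everything else is a formal manipulation with the lemmas already established.
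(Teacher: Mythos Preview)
Your proof is correct and follows essentially the same approach as the paper: both directions ultimately rest on Theorem~\ref{thm:nef<=>} together with the description of the ample cone as the interior of the nef cone (Lemma~\ref{lm:amplecone=interiornefcone}). The only cosmetic difference is that you perturb by $-\e H$ with $H$ ample $\Q$-Cartier, whereas the paper scales $A$ by a large integer $b$ and subtracts a fixed ample $\Q$-Cartier $L$; these are dual formulations of the same openness argument, and you additionally spell out the implication ``ample $\Rightarrow$ $f$-ample'' that the paper simply asserts.
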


\begin{proof} If $A$ is ample, it is $f$-ample, and by \ref{lm:pushforwardample} $f_*A$ is ample. Let us prove the converse. Let $L$ be any ample $\Q$-Cartier divisor on $Y$. For positive $b_0$ sufficiently divisible, $b_0A-L$ is $f$-nef since $A$ is $f$-ample. Similarly, for positive sufficiently divisible $b_1$, $b_1f_*A-f_*L$ is nef. Thus, for positive $b$ divisible by $b_0$ and $b_1$, $bA-L$ is $f$-nef and $f_*(bA-L)=bf_*A-f_*L$ is nef. By \ref{thm:nef<=>}, $bA-L$ is nef, which implies that $A$ is ample.
\end{proof}

\vspace{1ex}

If $D$ is an $\R$-Weil divisor on $X$, we define the following subsets (as in the $\Q$-Cartier case):
\begin{itemize}
\item[$\circ$] $D_{>0}:=\{x\in N_1(X)_W\,|\,x(D)>0\}$ (and similarly for $\geq0$, $<0$, $\leq 0$, $=0$);
\item[$\circ$] $\NEbar(X)_{W,D>0}:=\NEbar(X)_W\cap D_{>0}$ (and similarly for $\geq0$, $<0$, $\leq0$, $=0$).
\end{itemize}

\begin{thm}[Kleiman's criterion]\label{thm:kleimanWeil} Let $X$ be a projective variety over an algebraically closed field of characteristic $0$ having log terminal singularities, and let $D$ be an $\R$-Weil divisor on $X$. Then $D$ is ample if and only if $\NEbar(X)_W\setminus\{0\}\subseteq D_{>0}$.
\end{thm}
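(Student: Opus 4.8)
The plan is to run the classical Kleiman argument, using the reflexive duality $\NEbar(X)_W^\ast\cong\Nef(X)_W$ together with $\Nef(X)_W=\overline{\Amp(X)_W}$ and $\Amp(X)_W=\Int(\Nef(X)_W)$ (all from \ref{lm:amplecone=interiornefcone} and the discussion after \ref{df:Nefcone}) in place of their $\Q$-Cartier analogues, and the numerical invariance of amplitude to pass freely between a divisor $D$ and its class $\delta:=[D]\in\NS(X)_W$.

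\emph{Necessity.} Suppose $D$ is ample, so $\delta\in\Amp(X)_W=\Int(\Nef(X)_W)$. Fix $\xi\in\NEbar(X)_W\setminus\{0\}$; by definition of the dual cone, $\xi\geq0$ on all of $\Nef(X)_W$. Since $\xi\not\equiv0$ there is $v\in\NS(X)_W$ with $\xi(v)>0$, and since $\delta$ is interior to $\Nef(X)_W$ we have $\delta-tv\in\Nef(X)_W$ for all $0<t\ll1$; hence $0\leq\xi(\delta-tv)=\xi(\delta)-t\,\xi(v)$, which forces $\xi(\delta)>0$. Thus $\NEbar(X)_W\setminus\{0\}\subseteq D_{>0}$.

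\emph{Sufficiency.} Assume $\NEbar(X)_W\setminus\{0\}\subseteq D_{>0}$. Fix a norm $\lVert\cdot\rVert$ on the finite-dimensional space $N_1(X)_W$ (finite-dimensionality being \ref{thm:NSWfg} via duality). Since $\NEbar(X)_W$ is closed, $K:=\{\xi\in\NEbar(X)_W:\lVert\xi\rVert=1\}$ is compact and contained in $\NEbar(X)_W\setminus\{0\}$, so the continuous function $\xi\mapsto\xi(\delta)$ attains a minimum $c>0$ on $K$; by homogeneity $\xi(\delta)\geq c\lVert\xi\rVert$ for every $\xi\in\NEbar(X)_W$. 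Pick any $\eta\in\Amp(X)_W$ (nonempty since $\Nef(X)_W$ is full-dimensional) and $M>0$ with $|\xi(\eta)|\leq M\lVert\xi\rVert$ for all $\xi$. Choosing $\epsilon>0$ with $\epsilon M<c$ we get, for every $\xi\in\NEbar(X)_W$,
\[
\xi(\delta-\epsilon\eta)=\xi(\delta)-\epsilon\,\xi(\eta)\geq(c-\epsilon M)\lVert\xi\rVert\geq0,
\]
so $\delta-\epsilon\eta\in\NEbar(X)_W^\ast=\Nef(X)_W$. Finally, since $\Nef(X)_W$ is a closed convex cone with $\Amp(X)_W=\Int(\Nef(X)_W)$, the midpoint of a point of $\Nef(X)_W$ and a point of $\Amp(X)_W$ lies in $\Amp(X)_W$, and since $\Amp(X)_W$ is a cone this yields $\Nef(X)_W+\Amp(X)_W\subseteq\Amp(X)_W$. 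Hence $\delta=(\delta-\epsilon\eta)+\epsilon\eta\in\Amp(X)_W$, and by numerical invariance of amplitude $D$ is ample.

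The argument is essentially formal once the cone machinery of Sections \Sref{sect:NSW}--\Sref{sect:cones} is in place; the steps where that machinery is genuinely consumed are the inputs I am quoting (finite-dimensionality of $\NS(X)_W$, closedness of $\NEbar(X)_W$, and $\Amp(X)_W=\Int(\Nef(X)_W)=\Int\overline{\Amp(X)_W}$). The only point requiring a little care — and the closest thing to an obstacle here — is the bookkeeping identifying membership in $\Nef(X)_W$ with nefness of an $\R$-Weil divisor, and dually $\NEbar(X)_W^\ast=\Nef(X)_W$; I do not anticipate any genuinely new geometric difficulty beyond that.
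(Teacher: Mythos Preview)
Your proof is correct and follows essentially the same approach as the paper's: both run the classical Kleiman argument (as in Lazarsfeld) inside $\NS(X)_W$, using finite-dimensionality, the duality $\NEbar(X)_W^\ast=\Nef(X)_W$, and $\Amp(X)_W=\Int(\Nef(X)_W)$ as the only substantive inputs. The organization differs cosmetically---the paper routes sufficiency through the inequality $x(D)/x(H)\geq\varepsilon$ for a fixed ample $H$, while you conclude via $\Nef(X)_W+\Amp(X)_W\subseteq\Amp(X)_W$---but these are interchangeable packagings of the same convex-geometry step.
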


\begin{proof} The proof proceeds as in the $\Q$-Cartier case in several steps, and is taken from \cite{lazarsfeld}.

\textit{(1)} Since $\NS(X)_W$ is finitely generated, \ref{thm:NSWfg}, $\Nef(X)_W\cong\NE(X)_W^*$. Thus, a divisor $N$ is nef if and only if $x(N)\geq0$ for every $x\in\NEbar(X)_W$.

\textit{(2)} Let us fix an ample divisor $H$. By \ref{lm:amplecone=interiornefcone}, $D$ is ample if and only if there exists $\e>0$ such that 
\begin{eqnarray}
\frac{x(D)}{x(H)}\geq\e,
\end{eqnarray}
for every $x\in\NEbar(X)_W$. Indeed, this equation is true if and only if $D-\e H$ is nef, by (1). If $D$ is ample, then $D-\e H$ is ample for $0<\e\ll1$, \ref{lm:amplecone=interiornefcone}. Conversely, if $D-\e H$ is nef, then $(D-\e H)+\e H=D$ is ample.

\textit{(3)} Fix any norm $||\cdot||$ on $\NS(X)_W^*$. The inclusion $\NEbar(X)_W\setminus\{0\}\subseteq D_{>0}$ is equivalent to the inclusion
\begin{equation}\label{eq:kleiman}
(\NEbar(X)_W\cap S)\subseteq(D_{>0}\cap S),
\end{equation}
where $S=\{||x||=1\}\subset\NS(X)_W^*$. Let us assume \eqref{eq:kleiman}, and show that $D$ is ample. Let $L_D:\NS(X)_W^*\rightarrow\R$ be defined as $L_D(x)=x(D)$. This is a linear functional, which is positive on $\NEbar(X)_W\cap S$. Since $S$ is compact, there is a positive $\e$ such that $x(D)=L_D(x)\geq\e>0$ for all $x\in\NEbar(X)_W\cap S$. Thus $L_D(x)\geq\e||x||$ for all $x\in\NEbar(X)_W$.

\textit{(4)} Let $H_1,\ldots,H_n$ be ample divisors generating $\NS(X)_W$. Since all norms are equivalent on a finite dimensional Banach space, $||\cdot||$ is equivalent to the taxicab norm, $||x||_{\mathrm{taxi}}=\sum|x(H_i)|$. Since $\sum|x(H_i)|\geq|x(H)|$, where $H=\sum H_i$, there exists $\e'>0$ such that $x(D)\geq\e'x(H)$ for all $x\in\NEbar(X)_W$. By (2), this implies that $D$ is ample. 

\textit{(5)} Let $D$ be ample. By duality, for each $x\in\NEbar(X)_W^*\setminus\{0\}$ there exists $H_x\in\NS(X)_W$ such that $x(H_x)>0$. Since $D$ is ample, there exists $b>0$ such that $bD-H_x$ is nef. Then, $0\leq x(bD-H_x)=bx(D)-x(H_x)$, which implies that $x(D)\geq x(H_x)/b>0$.
\end{proof}

As an immediate consequence of \ref{thm:nef<=>}, we can start to understand the cone structure of $\NE(X)_W$.

\begin{lm}\label{lm:NE=sum} Let $X$ be a projective variety over an algebraically closed field of characteristic $0$ having log terminal singularities, and let $f:Y\rightarrow X$ be a small, projective, birational morphism. Then $\NEbar(Y)_W=\NEbar(X/Y)_W+\NEbar(X)_W$. Moreover, $\Int(\NEbar(X)_W)\subseteq\Int(\NEbar(Y)_W)$ and $\Int(\NEbar(X/Y)_W)\subseteq\Int(\NEbar(Y)_W)$.
\end{lm}

\begin{proof} Using cones and the maps $f_*:\NS(Y)_W\twoheadrightarrow\NS(X)_W$ and $\pi:\NS(Y)_W\twoheadrightarrow\NS(Y/X)_W$, we can rephrase \ref{thm:nef<=>} as
\begin{equation}\label{eq:intersectionofnefcones}
\Nef(Y)_W=\pi^{-1}\Nef(Y/X)_W\cap(f_*)^{-1}\Nef(X)_W.
\end{equation}
Moreover, $\ker\pi\cap\ker f_*=\{0\}$. By duality this implies that $N_1(X)_W\hookrightarrow N_1(Y)_W$, $N_1(Y/X)_W\hookrightarrow N_1(Y)_W$ and $N_1(X)_W+N_1(Y/X)_W=N_1(Y)_W$, identifying these spaces with their images. With this identification, \eqref{eq:intersectionofnefcones} becomes
\begin{equation}\label{eq:NE=sum}
\NEbar(Y)_W=\NEbar(X/Y)_W+\NEbar(X)_W.
\end{equation}
The statement on the containments of the interiors is the dual of \ref{cor:ample<=>}.

\end{proof}

\section{The Cone theorem}\label{sect:conethm}

Before proving the Cone theorem, we will prove a global generation theorem, which is a non-$\Q$-Cartier version of the usual basepoint-free theorem, \cite[3.3]{komo}. As in the usual Minimal Model Program, the contraction theorem is a consequence of the basepoint-free theorem. However, global generation and basepoint-freeness do not agree for Weil divisors. For this reason, \ref{thm:globgen} is only a statement about global generation, and not basepoint-freeness. Hence, the linear system associated with $bD$, for $b$ sufficiently divisible, does not induce a morphism.

The theorem below is a stronger version of \cite[5.2]{wStefano2}, which would not be sufficient to prove the Cone theorem \ref{thm:cone}.

\begin{thm}(Global generation)\label{thm:globgen} Let $X$ be a projective variety with log terminal singularities over an algebraically closed field of characteristic $0$. Let $D$ be a nef Weil divisor such that $aD-K_X$ is nef and big for some $a>0$ (resp. for all $a>0$ sufficiently divisible), then $D$ is agg.
\end{thm}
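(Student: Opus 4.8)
The plan is to reduce to the classical basepoint-free theorem for a $\Q$-Cartier divisor on a $\Q$-factorial klt variety, and then to descend global generation back to $X$. First I would pass to a small, projective, birational morphism $g : Y \to X$ with $Y$ normal and $\Q$-factorial (a $\Q$-factorialization, which exists since $X$ has log terminal singularities, \cite[108]{kollarexercises}), chosen so that both strict transforms $D_Y := g^{-1}_*D$ and $aD_Y - K_Y$ are nef on $Y$, where $K_Y := g^{-1}_*K_X$, so that $g_*K_Y = K_X$. Such a model should exist because $D$ and $aD - K_X$ are both nef on $X$, hence movable, so $g$ can be taken adapted to them --- for instance factoring through the $\Q$-Cartierization of a general nef and big combination of $D$ and $aD-K_X$, whose section ring is finitely generated (\cite[92]{kollarexercises}) --- after which Theorem \ref{thm:nef<=>} certifies nefness on $Y$ of each of $D_Y$ and $aD_Y-K_Y$ from the fact that it is $g$-nef together with the nefness of its pushforward ($D$, resp.\ $aD - K_X$), which is hypothesized. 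On such a $Y$ one then has: $Y$ is log terminal by Lemma \ref{lm:QCart still lt}, and, being $\Q$-factorial hence $\Q$-Gorenstein, it is in fact klt with empty boundary (adding a boundary only increases discrepancies); $D_Y$ is $\Q$-Cartier and nef; and $aD_Y - K_Y$ is $\Q$-Cartier, nef, and \emph{big} --- bigness because, writing $aD - K_X \sim_\Q A + E$ with $A$ ample $\Q$-Cartier and $E \geq 0$, one gets $aD_Y - K_Y \sim_\Q g^*A + g^{-1}_*E$, the sum of the nef and big divisor $g^*A$ and an effective divisor. The variant hypothesis ``for all $a$ sufficiently divisible'' is handled identically after fixing one such $a$.

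Next I would apply the classical basepoint-free theorem \cite[3.3]{komo} to the klt pair $(Y,0)$: choosing $N$ with $ND_Y$ Cartier, $ND_Y$ is nef and $\frac{a}{N}(ND_Y) - K_Y = aD_Y - K_Y$ is nef and big, so $|bND_Y|$ is basepoint-free for $b\gg0$; hence $D_Y$ is semiample on $Y$ and $\regsh_Y(mD_Y)$ is globally generated for every sufficiently divisible $m$. Finally I would descend to $X$: since $g$ is small, $g_*\regsh_Y(mD_Y) = \regsh_X(mD)$ (Lemma \ref{lm:pushforwardsheaves}, with the section-ring compatibility of Lemma \ref{lm:themostusefullemmaintheworld} when $g$ is built from $\Q$-Cartierizations), while the relative ampleness of the $\Q$-Cartier model of $D$ underlying the construction makes $\regsh_Y(mD_Y)$ relatively globally generated over $X$; combining these two facts with the global generation of $\regsh_Y(mD_Y)$ upstairs shows that $\regsh_X(mD)$ is globally generated for $m$ sufficiently divisible, i.e.\ $D$ is agg.

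The step I expect to be the main obstacle is the first one: producing a single small $\Q$-factorial model $Y$ over $X$ on which \emph{both} $D$ and $aD - K_X$ pull back to nef divisors, so that the classical basepoint-free theorem applies directly (with trivial boundary). The difficulty is intrinsic to the Weil setting --- the pullback of a nef Weil divisor need not be nef --- so one cannot simply pull back to an arbitrary $\Q$-factorialization, and a genuine argument (via Theorem \ref{thm:nef<=>} and the structure of $\Q$-Cartierizations) is needed to see that a common good choice exists. A secondary subtlety, already flagged in Remark \ref{rk:globgen vs bpf}, is that the conclusion is only \emph{global generation} and not basepoint-freeness, so the descent along $g$ must be carried out using the relative ampleness coming from the $\Q$-Cartierization rather than by naively pushing forward a basepoint-free linear system.
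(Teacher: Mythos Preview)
Your outline has a genuine gap exactly where you flagged it: the existence of a single small $\Q$-factorial model $g:Y\to X$ on which \emph{both} $D_Y$ and $aD_Y-K_Y$ are nef. Your proposed construction does not deliver this. Taking the $\Q$-Cartierization of a ``general nef and big combination'' $\alpha D+\beta(aD-K_X)$ makes the strict transform of that \emph{combination} $g$-ample, but says nothing about the relative nefness of $D_Y$ or of $aD_Y-K_Y$ individually; Theorem \ref{thm:nef<=>} then cannot be invoked for either summand. Likewise, an arbitrary $\Q$-factorialization gives you $\Q$-factoriality but no control on $g$-nefness of $D_Y$, so your descent step --- which explicitly uses relative ampleness of the model of $D$ --- is not available either. (Global generation upstairs does not automatically push forward to global generation of $\regsh_X(mD)$: $g_*$ is only left exact, so you really do need $g^*g_*\regsh_Y(mD_Y)\twoheadrightarrow\regsh_Y(mD_Y)$, i.e.\ relative global generation.)

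The paper sidesteps precisely this simultaneity problem by a two-stage bootstrap rather than a single $\Q$-factorialization. First it treats the case where $D$ is already $\Q$-Cartier: take the $\Q$-Cartierization $g:Z\to X$ of $-K_X$; then $g^*D$ is $\Q$-Cartier nef, and $ag^*D-K_Z=g^{-1}_*(aD-K_X)$ is $\Q$-Cartier, nef and big by Theorem \ref{thm:QCart positivity}, so the classical basepoint-free theorem applies on the $\Q$-Gorenstein klt variety $Z$, and the conclusion descends by the projection formula. For the general case, take instead the $\Q$-Cartierization $f:Y\to X$ of $D$; now $D_Y$ is $\Q$-Cartier, nef, and $f$-ample, but $K_Y$ need not be $\Q$-Cartier. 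The paper then proves directly --- via a section argument comparing $\regsh_Y\cdot\regsh_X(m(aD-K_X+A))$ with $\regsh_Y(m(aD_Y-K_Y+f^*A))$ and using $f$-ampleness --- that $aD_Y-K_Y$ is nef (and big by \cite[3.9]{wStefano2}), reducing to the first case. The point is that by $\Q$-Cartierizing only one divisor at a time, relative ampleness of that divisor comes for free, and the positivity of the other is established by hand rather than assumed.
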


\begin{rk} We do not assume that $K_X$ is $\Q$-Cartier.
\end{rk}

\begin{proof} First, let us assume that $D$ is $\Q$-Cartier. Let $g:Z\rightarrow X$ be the $\Q$-Cartierization of $-K_X$. Since $D$ is $\Q$-Cartier and nef, this will also be the $\Q$-Cartierization of $aD-K_X$ for $a>0$. Then $ag^*D-K_Y=g^{-1}_*(aD-K_X)$ is big and nef (and $\Q$-Cartier). By \ref{lm:QCart still lt}, $Z$ still has $\Q$-Gorenstein log terminal singularities. By the usual basepoint-free theorem, $g^*D$ is agg. Since $D$ is $\Q$-Cartier, by the projection formula $D$ is also agg.

For the general case, let $f:Y\rightarrow X$ be the $\Q$-Cartierization of $D$. By \ref{lm:QCart still lt}, $Y$ still has log terminal singularities. Moreover, $D_Y:=f^{-1}_*D$ is $\Q$-Cartier, nef and $f$-ample.

We will prove that, for $a$ sufficiently divisible, $aD_Y-K_Y$ is nef. Let $A$ be any ample Cartier divisor on $X$. For each $m$ and $a$ sufficiently divisible, $m(aD-K_X+A)$ is globally generated. Moreover, since $D_Y$ is $f$-ample, $m(aD_Y-K_Y)$ is $f$-ample and relatively globally generated for $m$ and $a$ sufficiently divisible. Since $f$ is small,
$$
\varphi:\regsh_Y\cdot\regsh_X(m(aD-K_X+A))\rightarrow\regsh_Y(m(aD_Y-K_Y+f^*A))
$$
is an isomorphism at the level of global sections. Since $aD_Y-K_Y$ is $f$-ample, $\regsh_Y(m(aD_Y-K_Y+g^*A))\otimes\regsh_Y(ng^*A)$ is also globally generated for $n$ sufficiently divisible. But then $\varphi$ must be surjective, and hence and isomorphism. Since the product $\regsh_Y\cdot\regsh_X(m(aD-K_X+A)$ is generated by global sections, by what just observed so is $\regsh_Y(m(aD_Y-K_Y+f^*A))$. This implies that $aD_Y-K_Y+g^*A$ is nef, and since nefness is a closed property on varieties with log terminal singularities, \eqref{eq:nefclosed}, $aD_Y-K_Y$ is nef.

By \cite[3.9]{wStefano2} (the pullback of a big Weil divisor is still big), $aD_Y-K_Y$ is also big. By what we proved at the beginning, $D_Y$ is agg. Since $D_Y$ is $g$-ample, this implies that $D$ is agg.
\end{proof}

\begin{df} If $F$ is a face of $\NEbar(X)_W$, we say that $F$ is \emph{$K_X$-negative} if $x(K_X)<0$ for all $x\in F$.
\end{df}

\begin{df}\label{df:rtlF} Let $F$ be a face of $\NEbar(X)_W$. If there exists a $\Q$-Cartier divisor $D$ on $X$ such that $F=\NEbar(X)_{W,D=0}=\NEbar(X)_W\cap D_{=0}$ (under the natural inclusion $\NS(X)_\R\subseteq\NS(X)_W$, \ref{prop:NS<NSW}), we say that $F$ is a \emph{rational face} of $\NEbar(X)_W$.
\end{df}

In the usual setting, one way of obtaining the Cone theorem is as a consequence of the basepoint-free theorem, \cite[3.3]{komo} and of the rationality theorem, \cite[3.5]{komo}. However, in the non-$\Q$-Gorenstein setting, it is not even clear how a rationality statement could be formulated.

{\renewcommand{\labelenumi}{(\arabic{enumi})}
\begin{thm}[Cone theorem]\label{thm:cone}  Let $X$ be a normal, projective variety over an algebraically closed field of characteristic $0$, with log terminal singularities.
\begin{enumerate}
\item There are countably many $C_j\in N_1(X)_W$ such that
$$
\NEbar(X)_W=\NEbar(X)_{W,K_X\geq0}+\sum\R_{\geq0}\cdot C_j,
$$
and they do not accumulate on the half-space $K_{X<0}$.

\item\label{item:contraction} (Contraction theorem) Let $F\subset\NEbar(X)_W$ be a $K_X$-negative extremal face. There exists a diagram
\begin{equation}
\xymatrix{& \Xtilde\ar[dr]^{\varphitilde_F} \ar[dl]_f & \\ X \ar@{-->}[rr]_{\varphi_F} & & Y,}
\end{equation}
where 
\begin{enumerate}
\item $f$ is small, projective, birational, and $\rho(\Xtilde)\leq\rho(X)+1$;
\item under the inclusion $\NEbar(X)_W\hookrightarrow\NEbar(\Xtilde)_W$, $F$ is a rational $K_{\Xtilde}$-negative extremal face and
%
$\varphitilde_F$ is a contraction of the face $F$;
\item $\Xtilde$ has log terminal singularities and $\varphitilde_{F *}\regsh_{\Xtilde}=\regsh_Y$.
\end{enumerate}

\end{enumerate}
\end{thm}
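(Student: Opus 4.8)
The plan is to obtain (1) by reducing to a $\Q$-factorialization of $X$, where the usual Cone theorem applies, and to obtain (2) by $\Q$-Cartierizing a nef Weil divisor that exposes the face $F$ and feeding it to the global generation theorem \ref{thm:globgen}.

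\textit{Part (1).} Let $f:Y\to X$ be a $\Q$-factorialization, which exists by \cite[108]{kollarexercises}; by \ref{lm:QCart still lt} $Y$ is $\Q$-factorial and log terminal, so $(Y,0)$ is a klt pair and $\NS(Y)_W=\NS(Y)_\R$. The usual Cone theorem for $(Y,0)$ makes $\NEbar(Y)_W=\NEbar(Y)_\R$ locally rational polyhedral in the half-space $\{K_Y<0\}$. Now $f_*:\NS(Y)_W\twoheadrightarrow\NS(X)_W$ dualizes to an inclusion $N_1(X)_W\hookrightarrow N_1(Y)_W$; by \ref{lm:NE=sum} (combined with \ref{thm:nef<=>}) this identifies $\NEbar(X)_W$ with $\NEbar(Y)_W\cap N_1(X)_W$, and, since $f_*K_Y=K_X$, the divisors $K_X$ and $K_Y$ define the same linear functional on $N_1(X)_W$. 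Hence, near any point $z$ of $\NEbar(X)_W$ with $z(K_X)<0$, the cone $\NEbar(X)_W$ is the intersection of a rational polyhedral cone with a rational linear subspace, so it is itself rational polyhedral there; in particular the extremal rays of $\NEbar(X)_W$ contained in $\{K_X<0\}$ are locally discrete in that half-space. Taking $\{C_j\}$ to be generators of these extremal rays gives the first assertion: they do not accumulate on $\{K_X<0\}$, and any $x$ with $x(K_X)<0$, lying in the rational polyhedral description near itself, is a nonnegative combination of finitely many $C_j$.

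\textit{Part (2).} By (1) the $K_X$-negative extremal face $F$ is, near $F$, cut out from a rational polyhedral cone, so there is a nef Weil $\Q$-divisor $D$ with $F=\NEbar(X)_{W,D=0}$ and $D>0$ on $\NEbar(X)_W\setminus F$. For $a\gg0$ the class $aD-K_X$ is positive on $\NEbar(X)_W\setminus\{0\}$ (it equals $-K_X>0$ on $F\setminus\{0\}$ and on a neighborhood of $F$, and is dominated by $aD$ elsewhere), hence ample by \ref{thm:kleimanWeil}, so in particular nef and big; therefore $D$ is agg by \ref{thm:globgen}. Let $f:\Xtilde\to X$ be the $\Q$-Cartierization of $D$ (which exists since $\Rscr(X,D)$ is finitely generated, \cite[92]{kollarexercises}, \ref{lm:themostusefullemmaintheworld}). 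Then $f$ is small, projective and birational, $\Xtilde$ is normal, projective and log terminal by \ref{lm:QCart still lt}, $\rho(\Xtilde)\le\rho(X)+1$ since $\Xtilde\to X$ is the $\Q$-Cartierization of a single divisor, and $\Dtilde:=f^{-1}_*D$ is $\Q$-Cartier and $f$-ample with $f_*\Dtilde=D$. By \ref{thm:nef<=>}, $\Dtilde$ is nef; by \ref{lm:ample+anything} and \ref{cor:ample<=>}, $a\Dtilde-K_{\Xtilde}$ is ample on $\Xtilde$ for $a\gg0$, so \ref{thm:globgen} applies again to show $\Dtilde$ is agg, hence, being $\Q$-Cartier, semiample. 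Thus $|b\Dtilde|$ for $b$ sufficiently divisible defines $\varphitilde_F:\Xtilde\to Y$ with $\varphitilde_{F*}\regsh_{\Xtilde}=\regsh_Y$, contracting exactly the curves with $\Dtilde\cdot C=0$. Finally, under $\NEbar(X)_W\hookrightarrow\NEbar(\Xtilde)_W$ one has $\NEbar(\Xtilde)_{W,\Dtilde=0}=F$: by \ref{lm:NE=sum} the relative part $\NEbar(\Xtilde/X)_W$ is $\Dtilde$-positive (as $\Dtilde$ is $f$-ample) and so drops out, while on the image of $N_1(X)_W$ the functionals $\Dtilde$ and $D$ coincide; this face is rational because $\Dtilde$ is $\Q$-Cartier, it is $K_{\Xtilde}$-negative because $a\Dtilde-K_{\Xtilde}$ is ample, and $\varphitilde_F$ is its contraction.

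\textit{Main obstacle.} The delicate point is the comparison used in (1): that for a $\Q$-factorialization $f:Y\to X$ the cones in \ref{lm:NE=sum} fit together so that $\NEbar(X)_W=\NEbar(Y)_W\cap N_1(X)_W$ (equivalently, $f_*\Nef(Y)_W$ is dense in $\Nef(X)_W$), from which the local rational polyhedrality of $\NEbar(X)_W$ in $\{K_X<0\}$ is inherited; since the ambient sum $N_1(X)_W+N_1(Y/X)_W=N_1(Y)_W$ need not be direct, this requires real care. Everything else is bookkeeping with \ref{thm:nef<=>}, \ref{cor:ample<=>}, \ref{lm:ample+anything}, \ref{thm:globgen}, \ref{thm:kleimanWeil} and \ref{lm:themostusefullemmaintheworld}, together with the (standard) extraction in (2) of a nef Weil divisor exposing $F$, which again rests on the polyhedral picture of (1).
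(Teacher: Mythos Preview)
Your Part~(2) is essentially the paper's argument: find a nef Weil divisor $D$ supporting $F$, use Kleiman (\ref{thm:kleimanWeil}) to get $aD-K_X$ ample, apply \ref{thm:globgen}, pass to the $\Q$-Cartierization $\Xtilde=\Proj_X\Rscr(X,D)$, and contract $\Dtilde$. Your route to ``$\Dtilde$ agg'' via \ref{lm:ample+anything}, \ref{cor:ample<=>} and a second application of \ref{thm:globgen} is a clean variant of what the paper does by hand; either works.

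The genuine gap is in Part~(1), and it is exactly the one you flag. Lemma~\ref{lm:NE=sum} gives $\NEbar(Y)_W=\NEbar(Y/X)_W+\NEbar(X)_W$ as a \emph{Minkowski sum}; it does \emph{not} give $\NEbar(X)_W=\NEbar(Y)_W\cap N_1(X)_W$, which, as you correctly observe, is equivalent to $\overline{f_*\Nef(Y)_W}=\Nef(X)_W$. Since the sum $N_1(X)_W+N_1(Y/X)_W=N_1(Y)_W$ need not be direct, an element of $\NEbar(Y)_W$ lying in $N_1(X)_W$ can still have a nonzero $\NEbar(Y/X)_W$-summand, so your ``intersection with a rational polyhedral cone'' step is unjustified. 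With an arbitrary $\Q$-factorialization (your citation of \cite[108]{kollarexercises}) there is no control on how $\NEbar(Y/X)$ sits relative to $K_Y$, and the argument does not close.

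The paper's fix is to choose the $\Q$-factorialization more carefully: take $f:Z\to X$ with $K_Z$ \emph{$f$-nef} (\cite[109]{kollarexercises}). Then $\NEbar(Z/X)\subseteq\{K_Z\ge 0\}$, so in the sum $\NEbar(Z)=\NEbar(Z/X)+\NEbar(X)_W$ the relative piece contributes nothing to the $K_Z$-negative region. In particular, every $K_Z$-negative extremal ray $R$ of $\NEbar(Z)$ lies in $\NEbar(X)_W$: writing a generator of $R$ as $a+b$ with $a\in\NEbar(Z/X)$ and $b\in\NEbar(X)_W$, extremality forces $a\in R$, but $a(K_Z)\ge 0$ while $R\subset\{K_Z<0\}$, so $a=0$. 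Since $K_X$ and $K_Z$ agree as functionals on $N_1(X)_W$, the classical cone theorem on $Z$ then hands you the $C_j$'s already inside $\NEbar(X)_W$, with the non-accumulation inherited from $\NEbar(Z)$. This bypasses the intersection claim you were worried about and is the missing idea in your Part~(1).
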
}

{\renewcommand{\labelenumi}{(\arabic{enumi})}
\begin{proof}
\begin{enumerate}
\item Let $f:Z\rightarrow X$ be a $\Q$-factorialization of $X$ such that $K_Z$ is $f$-nef; such map exists by \cite[109]{kollarexercises}. We know that $Z$ still has log terminal singularities, \ref{lm:QCart still lt}. We have that
$$
\NEbar(Z)=\NEbar(Z/X)+\NEbar(X)_W,
$$
by \eqref{eq:NE=sum} and since $\NEbar(Z)_W=\NEbar(Z)$ and $\NEbar(Z/X)_W=\NEbar(Z/W)$. By construction, $\NEbar(Z/X)_{K_Z\geq0}=\NEbar(Z/X)$, or equivalently $\NEbar(Z/X)_{K_Z<0}=\emptyset$. Thus
$$
\NEbar(Z)_{K_Z<0}=\NEbar(X)_{W,K_X<0}.
$$
By the usual Cone theorem, \cite[3.3]{komo},
$$
\NEbar(Z)_{K_Z<0}=\sum_{countable} \R_{\geq0}\cdot C_j
$$
and they do not accumulate on the half-space $K_{Z<0}$, which concludes the proof.
\item The first part of this proof follows \cite[\S3.3, Steps 6 and 7]{komo}. Let $\langle F\rangle\subset\NS(X)_W^*$ be the linear span of $F$. Let $H$ be an ample divisor and $\e>0$ such that $K_X+\e H$ is negative on $F$. Notice that, since $F$ is extremal, $\langle F\rangle\cap\NEbar(X)_W=F$. Let
$$
W_F:=\NEbar(X)_{W,K_X+\e H\geq0}+\sum_{\dim R=1, R\nsubset F} R,
$$
where $R$ are extremal rays of $\NEbar(X)_W$. Then $W_F$ is a closed cone which intersect $\langle F\rangle$ only at the origin and $\NEbar(X)_W=W_F+F$. Thus, there is an $\R$-Weil divisor $G$ such that $\langle F\rangle\subseteq G_{=0}$, but $G_{=0}\cap W_F=\{0\}$ (where $G_{=0}:=G_{\geq0}\cap G_{\leq0}$). Moreover, such $\R$-Weil divisor $D$ can be chosen to be positive on $W_F$. Notice that such $G$ will necessarily be nef. The set of such divisors is open and non-empty, thus there must be one defined over $\Q$. By taking suitable multiples, we can assume that $D$ is a nef Weil divisor which is positive on $W_F$ and zero exactly on $F$. Since $-K_X$ is positive on $F$, while $D$ is positive on $W_F$ and zero on $F$, for $b>0$, $bD-K_X$ is positive on $\NEbar(X)_W$. By \ref{thm:kleimanWeil}, $bD-K_X$ is ample. By \ref{thm:globgen}, $D$ is agg. 

Let $\Xtilde=\Proj_X\Rscr(X,D)$, which has log terminal singularities by \ref{lm:QCart still lt}. Then $\Dtilde:=f^{-1}_*D$ is $\Q$-Cartier. Let $m$ be sufficiently divisible. Since $\regsh_X(mD)$ is globally generated, so is $f^*\regsh_X(mD)$ and thus so is $\regsh_{\Xtilde}\cdot\regsh_X(mD)$, since $\regsh_{\Xtilde}\cdot\regsh_X(mD)$ is the quotient of $f^*\regsh_X(mD)$ by its torsion. Since we have an inclusion $\regsh_{\Xtilde}\cdot\regsh_X(m D)\hookrightarrow\regsh_{\Xtilde}(m\Dtilde)$ which is an isomorphism at the level of global sections, $\regsh_{\Xtilde}(m\Dtilde)$ is globally generated. Hence a high enough power of $\Dtilde$ defines a morphism whose Stein factorization is $\varphitilde_D:\Xtilde\rightarrow Y$, and we set $\varphitilde_F:=\varphitilde_D$.

With slight abuse of notation, we will identify $\NEbar(X)_W$ and $\NEbar(\Xtilde/X)_W$ as subspaces of $\NEbar(\Xtilde)_W$. With this identification, if $x\in\NEbar(X)_W$, $x(D)=x(\Dtilde)$ and $x(K_X)=x(K_{\Xtilde})$. We immediately have that $F$ is still $K_{\Xtilde}$-negative. Moreover, the divisor $\Dtilde$ is $f$-ample, thus by the relative version of Kleiman's criterion, \ref{thm:kleimanWeil,rel}, $\NEbar(\Xtilde/X)_W\setminus\{0\}\subseteq\Dtilde_{>0}$. Moreover, $\NEbar(X)_W\subseteq\Dtilde_{\geq0}$ and $F=\NEbar(X)\cap D_{=0}$. Thus,
\begin{eqnarray*}
\NEbar(\Xtilde)_W\cap\Dtilde_{=0}&=&\big(\NEbar(X)_W+\NEbar(\Xtilde/X)_W\big)\cap\Dtilde_{=0}=\\
&=&\NEbar(X)_W\cap\Dtilde_{=0}=\NEbar(X)_W\cap D_{=0}=F.
\end{eqnarray*}
This implies that $F$ is a rational extremal face.

As in the first part of the proof (but now for $\Xtilde$ and $F$), $F$ is a $K_{\Xtilde}$-negative extremal face, $\Dtilde$ is an asymptotically globally generated nef divisor such that $\NEbar(\Xtilde)_W\cap\Dtilde_{\geq0}=F$, so as in \cite[\S3.3, Steps 6 and 7]{komo}, it uniquely defines a morphism whose Stein factorization $\varphitilde_F:\Xtilde\rightarrow Y$, with $\varphitilde_{F *}\regsh_{\Xtilde}=\regsh_Y$, is the desired contraction.
\end{enumerate}
\end{proof}}

\begin{rk}[The map $f:\Xtilde\rightarrow X$ and flips] It is reasonable to ask whether the map $f:\Xtilde\rightarrow X$ of the theorem is a flip. More precisely, we might have performed a flipping contraction in the previous step, and $f:\Xtilde\rightarrow X$ might be the flip of such contraction:
$$
\xymatrix{X_{i-1}\ar@{-->}[rr] \ar[dr] & & \Xtilde=X_{i-1}^+ \ar[dl]^f\\
& X_i. &}
$$
This is certainly possible. However, this is not the only possibility. If the face we are contracting is already rational $\Xtilde=X$ and $f=id$, in which case this is not a flip. Even when $\Xtilde=\Proj_X\Rscr(X,D)$ for some Weil divisor $D$, this would be a flip only if $D$ and $K_X$ differed by a line bundle, which might not happen. The point of this program is not that we will never do flips, but that we are not forced to perform them (as in the usual MMP).
\end{rk}

As a corollary, we obtain the usual description of the effective cones for Fano varieties. In particular, this result will apply to the generic fiber of a contraction of the Cone theorem (by \ref{lm:Moricontraction} and the relative version of Kleiman's criterion \ref{thm:kleimanWeil,rel}).

\begin{cor} Let $X$ be a projective variety over a field of characteristic $0$ with log terminal singularities and with $-K_X$ ample. Then $\NEbar(X)_W$ is polyhedral (generated by finitely many rays).
\end{cor}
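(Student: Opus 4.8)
The plan is to read off polyhedrality directly from the two main results already in hand: Kleiman's criterion (Theorem~\ref{thm:kleimanWeil}) and part~(1) of the Cone theorem (Theorem~\ref{thm:cone}). There is almost no new content; the point is to combine them correctly.

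First I would apply Kleiman's criterion to the ample $\R$-Weil divisor $-K_X$: it gives $\NEbar(X)_W\setminus\{0\}\subseteq(-K_X)_{>0}=K_{X<0}$. In particular the ``$K_X$-nonnegative part'' of the cone is trivial, $\NEbar(X)_{W,K_X\geq0}=\{0\}$. Feeding this into the decomposition of Theorem~\ref{thm:cone}(1), namely $\NEbar(X)_W=\NEbar(X)_{W,K_X\geq0}+\sum_j\R_{\geq0}\cdot C_j$, collapses it to $\NEbar(X)_W=\sum_j\R_{\geq0}\cdot C_j$, where a priori $j$ ranges over a countable index set and the $C_j$ do not accumulate on the half-space $K_{X<0}$.

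The remaining point is that this countable set must be finite. I would fix a norm $\|\cdot\|$ on $N_1(X)_W$ and rescale so that each $C_j$ lies on the unit sphere $S$. Then $\{C_j\}\subseteq\NEbar(X)_W\cap S$, a set which is compact (a closed cone intersected with the unit sphere — recall $\NEbar(X)_W$ is closed) and which, by the first step, is contained in the \emph{open} half-space $K_{X<0}$. If there were infinitely many $C_j$, they would have an accumulation point; that point would lie in the compact set $\NEbar(X)_W\cap S$, hence in $K_{X<0}$, contradicting the non-accumulation statement of Theorem~\ref{thm:cone}(1). Therefore only finitely many $C_j$ occur, and $\NEbar(X)_W=\sum_{j=1}^{N}\R_{\geq0}\cdot C_j$ is generated by finitely many rays, i.e.\ polyhedral.

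Since essentially all the work is already carried by the Cone theorem and Kleiman's criterion, I do not expect a serious obstacle here; the only step needing a little care is the last one, where one must correctly unwind the phrase ``the $C_j$ do not accumulate on $K_{X<0}$'' and pair it with the compactness of the spherical slice $\NEbar(X)_W\cap S$ — exactly the reduction used classically to deduce polyhedrality of $\NEbar(X)_\R$ for Fano varieties. If one wants the statement over an arbitrary (not necessarily algebraically closed) field of characteristic $0$ as written, one should additionally note that ampleness of $-K_X$, log terminality, and the formation of $\NEbar(\,\cdot\,)_W$ are unchanged under base change to the algebraic closure, so that the cited theorems apply verbatim.
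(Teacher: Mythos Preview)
Your argument is correct and is precisely the intended deduction: the paper states this result without proof as an immediate corollary of the Cone theorem, and your combination of Theorem~\ref{thm:kleimanWeil} (to get $\NEbar(X)_{W,K_X\geq0}=\{0\}$) with the non-accumulation clause of Theorem~\ref{thm:cone}(1) is exactly the standard reasoning one fills in. Your closing remark on base change to the algebraic closure is a nice touch, since the corollary as stated drops the word ``algebraically closed'' present in the hypotheses of the cited theorems.
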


\begin{ex}\label{ex:cone2} Let $X$ be the quadric cone $X=\{xy-zw=0\}$ in $\Pspace^4_\C$. Notice that $X$ is Fano with $\rho(X)=1$. Following the computations of \ref{ex:cone1}, we have that $N_1(X)_W=\R\gamma_{(1,0)}\oplus\R\gamma_{(0,1)}$, where $\gamma_{(1,0)}(C_{(a,b)})=a$ and $\gamma_{(0,1)}(C_{(a,b)})=b$. With this notation, $\NEbar(X)_W=\{a\gamma_{(1,0)}+b\gamma_{(0,1)}\,|\,a,b\geq0\}$.
$$
\setlength{\unitlength}{0.5cm}
\begin{picture}(10,11)
\put(0,0){\includegraphics[width=5cm,height=5cm]{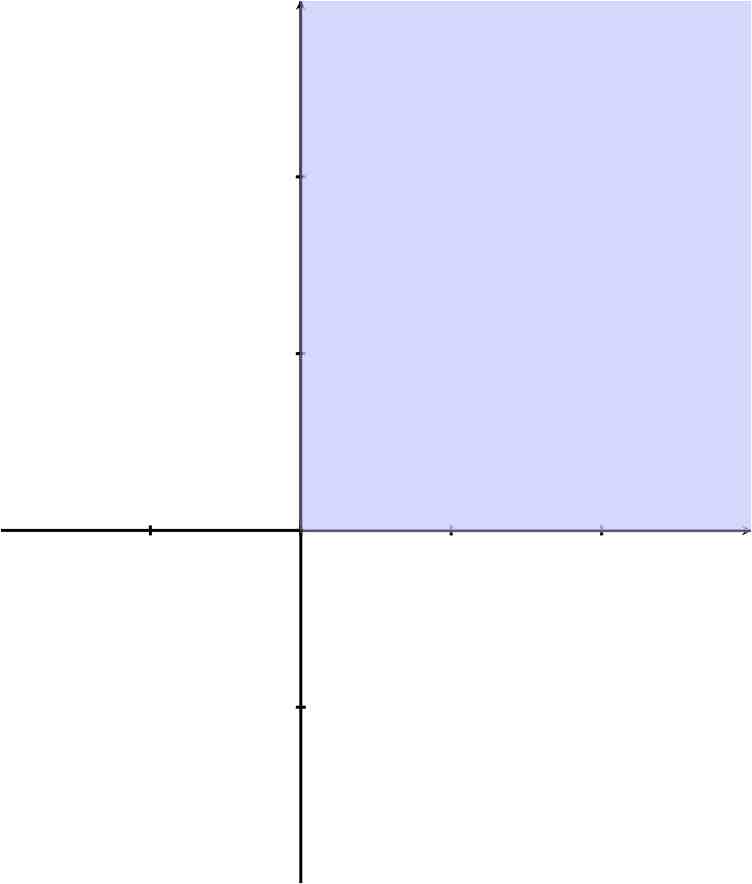}}
\put(8,8){$\NEbar(X)_W$}
\put(9,3.3){$\gamma_{(0,1)}$}
\put(2.2,9.3){$\gamma_{(1,0)}$}
\end{picture}
$$

The cone $\NEbar(X)_W$ has four $K_X$-negative extremal faces:
\begin{enumerate}
\item $F=0$;
\item $F=\{a\gamma_{(1,0)}\,|\,a\geq0\}$;
\item $F=\{b\gamma_{(0,1)}\,|\,b\geq0\}$;
\item $F=\NEbar(X)_W$.
\end{enumerate}
\end{ex}

The first and the last case are rational faces, and in both cases $\Xtilde=X$ with the notation of the theorem. The contraction of the former face is not a contraction at all, but an embedding of $X$ in a projective space, while the contraction of the latter gives a map $X\rightarrow\Spec\C$. These maps are the ones from the regular MMP.

We have two new maps, corresponding to the two faces in (b) and (c). Let us consider the case in (b) (the one in (c) being similar). The divisor $D$ of the theorem describing $F$ is $D:=C_{(0,1)}$, which is agg but not $\Q$-Cartier. We set $\Xtilde=\Proj_X\Rscr(X,D)$, which in this case is actually a resolution, and a $\Pspace^1_\C$-bundle over $\Pspace^2_\C$. The contraction of the theorem is the map
$$
\xymatrix{& \Xtilde\ar[dr]^{\varphitilde_F} \ar[dl]_f & \\ X \ar@{-->}[rr]_{\varphi_F} & & \Pspace^2_\C,}
$$
where the generic fiber of $\varphitilde_F$ is $\Pspace^1_\C$. The geometric picture is the following: $X$ is a cone over a linear embedding of $\Pspace^1_\C\times\Pspace^1_\C$, and $\gamma_{(1,0)}$ corresponds to one of the $\Pspace^1_\C$; when we contract it, we are left with a cone over a linear embedding of $\Pspace^1_\C$, namely $\Pspace^2_\C$.

\section{The program}\label{sect:program}

We start by presenting a Minimal Model Program without flips. We point out that we can obtain a $\Q$-factorial model. 
{\renewcommand{\labelenumi}{\textit{Step \arabic{enumi}}}
\begin{enumerate}
\setcounter{enumi}{-1}
\item Let $X$ be a projective variety with log terminal singularities (over an algebraically closed field of characteristic $0$).
\item \begin{enumerate}
\item If $K_X$ is nef, let $\Xhat\rightarrow X$ be a $\Q$-factorialization of $X$ -- that is, a small projective birational morphism with $\Xhat$ $\Q$-factorial -- with $K_{\Xhat}$ relatively nef. Then, $\Xhat$ is $\Q$-factorial, with log terminal singularities and $K_{\Xhat}$ is nef. The variety $\Xhat$ is a minimal model for $X$.
\item Otherwise, let 
$$
\xymatrix{& \Xtilde\ar[dr]^{\varphitilde} \ar[dl]_f & \\ X \ar@{-->}[rr]_{\varphi} & & Y,}
$$
be a contraction of a $K_X$-negative extremal face, given by the Cone theorem \ref{thm:cone}.
\end{enumerate}
\item There are two possibilities for the contraction above.
\begin{enumerate}
\item If $\dim Y<\dim X$, the generic fiber of $\varphitilde$ is a log Fano variety. We set $X_0:=Y$ and we can restart the program in smaller dimension.
\item Otherwise, we set $X_{i+1}:=Y$, and we restart the algorithm.
\end{enumerate}
\end{enumerate}}

There are several statements that are more or less implicit in the stated version of the program and that need to be proven.

\begin{lm} Let $X$ be a projective variety over an algebraically closed field of characteristic $0$ with log terminal singularities and with $K_X$ nef, and let $f:\Xhat\rightarrow X$ be a $\Q$-factorialization of $X$ with $K_{\Xhat}$ $f$-nef. Then $\Xhat$ has log terminal singularities and $K_{\Xhat}$ is nef. 
\end{lm}

\begin{proof} Notice that such $\Q$-factorialization exists because $X$ has log terminal singularities, \cite[109]{kollarexercises}. Since $f$ is small, $\Xhat$ has log terminal singularities by \ref{lm:QCart still lt}. Since $K_{\Xhat}$ is $f$-nef and $f_*K_{\Xhat}=K_X$ is nef, $K_{\Xhat}$ is nef by \ref{thm:nef<=>}.
\end{proof}

\begin{df} A projective variety $X$ is called \emph{log Fano} if there exists a divisor $\Delta$ on $X$ such that $(X,\Delta)$ is a klt pair and $-(K_X+\Delta)$ is ample.
\end{df}

\begin{lm}\label{lm:Moricontraction} Let $X$ be a projective variety over an algebraically closed field of characteristic $0$ with log terminal singularities, and let $\varphi:X\dashrightarrow Y$ be a contraction of a $K_X$-negative extremal face, with $f:\Xtilde\rightarrow X$ and $\varphitilde:\Xtilde\rightarrow X$ as in the Cone theorem. The varieties $\Xtilde$ and $Y$ have log terminal singularities. Moreover, if $\dim Y<\dim X$, the generic fiber of $\varphitilde$ is a log Fano.
\end{lm}

\begin{proof} The variety $\Xtilde$ has log terminal singularities by \ref{lm:QCart still lt}. By the relative version of Kleiman's criterion for ampleness, \ref{thm:kleimanWeil,rel} , $-K_{\Xtilde}$ is $\varphitilde$-ample; thus $Y$ has log terminal singularities by \ref{prop:contr still lt,2}.

If $\dim Y<\dim X=\dim\Xtilde$, since both $\Xtilde$ and $Y$ have log terminal singularities, and we are over an algebraically closed field of characteristic $0$, the generic fiber of $\varphitilde$ has log terminal singularities. By \ref{lm:logFano}, the generic fiber of $\varphitilde$ is a log Fano.
\end{proof}

\begin{rk}[Outputs]\label{rk:models} This program will produce different outputs than the usual MMP. More precisely, if $(X,\Delta)$ is a klt pair with $K_X+\Delta$ pseudoeffective and $\Delta$ big, the MMP with scaling produces a minimal model $(\Xtilde,\tilde{\Delta})$ with $(\Xtilde,\tilde{\Delta})$ a klt pair with $K_{\Xtilde}+\tilde{\Delta}$ nef. With the same input, and assuming termination of my program, it is not clear that I would obtain a minimal model $\Xhat$. For example, it might happen that $K_X+\Delta$ is pseudo effective but $K_X$ is not, so my program might terminate with a Fano contraction. Even when my program would produce a minimal model $\Xhat$ such model would be with log terminal singularities (in the usual sense) and such that $K_{\Xhat}$ is nef. So this would not necessarily be a model for the pair $(X,\Delta)$. It is important to compare the outputs when $\Delta=0$, which is the only case where we have any hope of obtaining models that we can relate. However, it is not immediately clear what the relation between models would be (alternatively, my program can be extended to pairs, and it is worth exploring what happens for pairs).
\end{rk}

\section{The relative statements}\label{sect:rel}

Almost every result of this paper holds in the relative setting, with the obvious modifications. We will simply list the statements where the modification of the proof is clear.

\begin{prop}[cfr. \ref{prop:NS<NSW} and \ref{cor:NEW>NE}] Let $X\rightarrow U$ be a projective morphism of normal, quasi-projective varieties over an algebraically closed field of characteristic $0$, with $X$ having log terminal singularities. There is a natural inclusion $\NS(X/U)_\R\subseteq\NS(X/U)_W$ that restricts to an inclusion $\Nef(X/U)\subseteq\Nef(X/U)_W$, and that induces surjections $N_1(X/U)_W\twoheadrightarrow N_1(X/U)_\R$ and $\NEbar(X/U)_W\twoheadrightarrow\NEbar(X/U)_\R$.
\end{prop}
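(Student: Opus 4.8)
The plan is to transcribe the proofs of Proposition \ref{prop:NS<NSW} and Corollary \ref{cor:NEW>NE} into the relative setting, replacing throughout the absolute notions (nef, ample, numerically trivial) by their relative analogues ($f$-nef, $f$-ample, $f$-numerically trivial) and invoking the relative forms of the ingredients used there, all of which are already available over a quasi-projective base: Lemma \ref{lm:amplecone=interiornefcone}, Lemma \ref{lm:ample+anything} and Theorem \ref{thm:NSWfg}. Concretely the argument splits into three steps: a comparison lemma on $\Q$-Cartier divisors, the diagram chase of \ref{prop:NS<NSW}, and the passage to duals.

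First I would establish the comparison lemma: for a $\Q$-Cartier $\Q$-divisor $D$ on $X$, being $f$-nef in the classical sense (nonnegative intersection with every curve contracted by $f$) is equivalent to $[D]\in\Nef(X/U)_W$. One direction is immediate: a classically $f$-nef $\Q$-Cartier $D$ makes $D+A$ an $f$-ample $\Q$-Cartier divisor for every $f$-ample $\Q$-Cartier $A$, so $\regsh_X(m(D+A))$ is relatively globally generated for $m$ sufficiently divisible, i.e.\ $D+A$ is $f$-agg. Conversely, if $[D]\in\Nef(X/U)_W$ then for each $f$-ample $\Q$-Cartier $A$ the Cartier divisor $m(D+A)$ is relatively globally generated for $m$ sufficiently divisible, hence classically $f$-nef; letting $A\to0$ and using that the classical relative nef cone is closed shows $D$ is classically $f$-nef. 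Applying this to $\pm D$ shows that a $\Q$-Cartier divisor is $f$-numerically trivial in the classical sense if and only if it is in the sense of this paper; equivalently, the kernel of $\CDiv(X)_\R\to\NS(X/U)_\R$ is the intersection of $\CDiv(X)_\R$ with the kernel of $\Div(X)_\R\to\NS(X/U)_W$. Feeding this into the commutative diagram of short exact sequences from the proof of \ref{prop:NS<NSW} (now with $/U$ everywhere) and chasing exactly as there yields the natural injection $\NS(X/U)_\R\hookrightarrow\NS(X/U)_W$. Combined with the standard density of $\Q$-Cartier classes in $\NS(X/U)_\R$ and closedness of nef cones, the comparison lemma also gives $\Nef(X/U)=\Nef(X/U)_W\cap\NS(X/U)_\R$; in particular the injection restricts to the asserted inclusion $\Nef(X/U)\subseteq\Nef(X/U)_W$.

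It then remains to dualize, as in \ref{cor:NEW>NE}. By Theorem \ref{thm:NSWfg} the space $\NS(X/U)_W$ is finite dimensional, and $\NS(X/U)_\R$ is finite dimensional by classical relative N\'eron--Severi theory, so dualizing the injection of the previous step gives the surjection $N_1(X/U)_W=\NS(X/U)_W^{*}\twoheadrightarrow\NS(X/U)_\R^{*}=N_1(X/U)_\R$. Its restriction to the cones sends $\NEbar(X/U)_W=\Nef(X/U)_W^{*}$ into $\Nef(X/U)^{*}=\NEbar(X/U)_\R$ because $\Nef(X/U)\subseteq\Nef(X/U)_W$; and it is onto $\NEbar(X/U)_\R$ because the class of an $f$-ample Cartier divisor lies in $\Amp(X/U)_W$, so the relative interior of $\Nef(X/U)_W$ meets the subspace $\NS(X/U)_\R$, and the conic-duality identity then holds without closure: $(\Nef(X/U)_W\cap\NS(X/U)_\R)^{\vee}$ equals the image of $\Nef(X/U)_W^{\vee}$ under the restriction map. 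The main obstacle, such as it is, is the comparison lemma of the second step; everything else is the diagram chase of \ref{prop:NS<NSW} and the cone duality of \ref{cor:NEW>NE} carried out over $U$. But that lemma is essentially forced by the definition of $f$-nefness for Weil divisors together with the relative version of Lemma \ref{lm:amplecone=interiornefcone}, so I do not expect it to cause real trouble.
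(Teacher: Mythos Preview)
Your proposal is correct and follows the paper's approach: the paper gives no explicit proof here, merely listing this proposition among the relative statements whose proof is an ``obvious modification'' of the absolute versions \ref{prop:NS<NSW} and \ref{cor:NEW>NE}, which is exactly what you carry out. Your comparison lemma and the cone-duality argument for surjectivity make explicit details that the paper leaves to the reader even in the absolute case (the claim $\ker\pi_C=\ker\pi\cap\CDiv(X)_\R$ in \ref{prop:NS<NSW}, and the passage from $\Nef\subseteq\Nef_W$ to $\NEbar_W\twoheadrightarrow\NEbar$ in \ref{cor:NEW>NE}).
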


\begin{prop}[cfr. \ref{thm:nef<=>} and \ref{cor:ample<=>}]\label{prop:nef<=>,rel} Let $g:X\rightarrow U$ projective morphism of normal, quasi-projective varieties over an algebraically closed field of characteristic $0$, with $X$ having log terminal singularities, and let $f:Y\rightarrow X$ be a small, projective, birational map, and let $h=g\circ f$. A Weil $\R$-divisor $D$ on $Y$ is $h$-nef (resp. $h$-ample) if and only if $D$ is $f$-nef (resp. $f$-ample) and $f_*D$ is $g$-nef (resp. $g$-ample).
\end{prop}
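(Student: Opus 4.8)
The plan is to reduce to the absolute statements \ref{thm:nef<=>} and \ref{cor:ample<=>} by passing to an affine base, and then to re-run their proofs with the obvious modifications, after supplying relative versions of the auxiliary results they invoke. The first step is the observation that all four properties in play---$h$-nef, $h$-ample, $f$-nef, $f$-ample---are local on the base: for relative asymptotic global generation this is the remark following the definition of relative global generation, and for the ample versions it follows from the definition (or from \ref{lm:amplecone=interiornefcone}). So we may assume $U=\Spec R$ is affine; then $Y$ still has log terminal singularities by \ref{lm:QCart still lt}, so the additivity \ref{lm:nefnessadditive} and the closedness statement \ref{lm:amplecone=interiornefcone} are available on $X$ and $Y$ over $U$, and a coherent sheaf on $X$ (resp. on $Y$) is relatively globally generated over $U$ precisely when it is globally generated.

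Next I would repeat the proof of \ref{thm:nef<=>} essentially verbatim, reading ``nef'' as ``$g$-nef'' on $X$ and ``$h$-nef'' on $Y$, and ``globally generated'' as ``relatively globally generated over $U$''. The one new point in the ``only if'' direction is that $h$-nefness of $D$ forces $f$-nefness: given an $f$-ample $\Q$-Cartier divisor $A$ on $Y$, choose a $g$-ample $\Q$-Cartier divisor $B$ on $X$; then $A+f^{*}(bB)$ is $h$-ample for $b\gg0$ by standard properties of relative ampleness (cf. \ref{lm:ample+anything}), so $\regsh_Y(m(D+A+f^{*}bB))$ is relatively globally generated over $U$ for $m$ sufficiently divisible, and, using that the natural surjection $h^{*}h_{*}\Fscr\to\Fscr$ factors through $f^{*}f_{*}\Fscr\to\Fscr$ together with the projection formula, this descends to relative global generation of $\regsh_Y(m(D+A))$ over $X$; that $f_{*}D$ is $g$-nef comes from the relative version of \ref{cor:pushforwardnef}. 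For the converse one takes an $f$-ample $\Q$-Cartier divisor $A$ on $Y$ and a $g$-ample $\Q$-Cartier divisor $H$ on $X$, uses the relative version of \ref{lm:kleiman4.1} to see that $f_{*}A$, hence $f_{*}D+f_{*}A$, is $g$-nef, transports the relative global generation of $\regsh_X(m(f_{*}D+f_{*}A+H))$ up to $Y$ by the projection formula and \ref{lm:pushforwardsheaves}, and lets the ample divisor $H$ shrink to $0$, invoking that relative nefness is closed (\ref{lm:amplecone=interiornefcone}). The $h$-ample equivalence then follows exactly as \ref{cor:ample<=>} follows from \ref{thm:nef<=>}, by combining the $h$-nef equivalence just proved with the relative version of \ref{lm:pushforwardample}.

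The remaining task is to record the relative analogs of \ref{lm:kleiman4.1}, \ref{cor:pushforwardnef} and \ref{lm:pushforwardample}. Each is proved by the original argument with ``ample on $X$'' replaced by ``$g$-ample on $X$'' and Serre vanishing replaced by relative (over $U$) Serre vanishing: after reducing to a $\Q$-Cartier divisor by passing to the $\Q$-Cartierization and using \ref{thm:QCart positivity}, one twists by a $g$-very ample Cartier divisor $L$ on $X$, applies relative Serre vanishing to kill the higher $R^{j}(g\circ f)_{*}$ and the higher cohomology along the fibers simultaneously, collapses the Leray spectral sequence, and concludes by relative Castelnuovo--Mumford regularity; \ref{cor:pushforwardnef} then follows by writing the $h$-nef divisor as a limit of $h$-ample ones, pushing forward, and using again closedness of relative nefness.

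The main difficulty here is bookkeeping rather than substance: throughout the argument one must keep track of the distinction between ``relatively globally generated over $U$'' and ``relatively globally generated over $X$'', checking that each pullback, pushforward, projection-formula and quotient-by-torsion step in the proof of \ref{thm:nef<=>} produces the global generation actually needed once the base is $U$ and not a point. The factorization of the adjunction unit recorded above handles the one genuinely new passage; the only further thing worth stating explicitly at the outset is that the finite-generation inputs used in those proofs---finite generation of the algebras $\Rscr(\cdot,D)$, and hence the existence of $\Q$-Cartierizations and the applicability of \ref{thm:QCart positivity} and \ref{lm:QCart still lt}---are local on the source variety and are therefore unaffected by the choice of base.
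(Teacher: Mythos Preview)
Your proposal is correct and follows essentially the same route as the paper: the paper likewise says the converse direction is ``exactly as in \ref{thm:nef<=>}'', obtains $g$-nefness of $f_*D$ by reducing to the $\Q$-Cartier ample case and re-running \ref{lm:kleiman4.1} with relative Castelnuovo--Mumford regularity and the Grothendieck spectral sequence in place of Leray, and treats the ample statement as in \ref{cor:ample<=>}. The only differences are cosmetic: the paper dismisses ``$h$-nef $\Rightarrow$ $f$-nef'' as clear rather than spelling out the twist-by-$f^*B$ argument you give, and it names the spectral sequence Grothendieck rather than Leray.
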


\begin{proof} Clearly, if $D$ is $h$-nef it is $f$-nef. The proof that if $D$ is $f$-nef and $f_*D$ is $g$-nef, then $D$ is $h$-nef is exactly as in \ref{thm:nef<=>}. To see that if $D$ is $h$-nef, then $f_*D$ is $g$-nef, we reduce to the case where $D$ is $\Q$-Cartier and ample, as in \ref{cor:pushforwardnef}. Then, the proof proceeds like the one of \ref{lm:kleiman4.1}, where the usual Castelnuovo-Mumford regularity is substituted by the relative regularity, and  Leray's spectral sequence by Grothendieck's spectral sequence.

The statement for ampleness has the same proof as in the non-relative setting.
\end{proof}

\begin{cor}[cfr. \ref{cor:surjectionNSW}]\label{cor:surjectionNSW,rel} If $X\rightarrow U$ is a projective morphism of normal, quasi-projective varieties over an algebraically closed field of characteristic $0$, with $X$ having log terminal singularities, and $f:Y\rightarrow X$ is a small projective morphism over $U$ with $Y$ normal quasi-projective, as above the pushforward induces a surjection
$$
f_*:\NS(Y/U)_W\twoheadrightarrow\NS(X/U)_W.
$$
By duality, there is a natural inclusion
$$
(f_*)^*:N_1(X/U)_W\hookrightarrow N_1(Y/U)_W.
$$
\end{cor}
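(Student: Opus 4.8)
The plan is to mirror the proof of Corollary~\ref{cor:surjectionNSW} in the relative setting, the one new ingredient being the relative nefness criterion, Proposition~\ref{prop:nef<=>,rel}. Write $g:X\to U$ for the structure morphism and $h=g\circ f:Y\to U$. First I would note that $Y$ has log terminal singularities by Lemma~\ref{lm:QCart still lt} (as $f$ is small and birational), so that both $\NS(X/U)_W$ and $\NS(Y/U)_W$ are finite-dimensional by Theorem~\ref{thm:NSWfg}, and, nefness being additive in this setting (Lemma~\ref{lm:nefnessadditive}), the $g$-numerically trivial and $h$-numerically trivial divisors form subgroups, so the quotients defining these spaces make sense.

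Next I would observe that, since $f$ is small and birational, no prime divisor of $Y$ is $f$-exceptional and every prime divisor of $X$ has a unique strict transform on $Y$; hence $f_*:\Div(Y)_\Q\to\Div(X)_\Q$ is an isomorphism, with inverse the strict transform $f^{-1}_*$ --- in particular it is surjective. The key point is that $f_*$ descends to the Weil N\'eron-Severi quotients, and this is exactly where Proposition~\ref{prop:nef<=>,rel} enters: if a $\Q$-Weil divisor $D$ on $Y$ is $h$-nef then $f_*D$ is $g$-nef, and applying this to both $D$ and $-D$ shows that $f_*$ sends $h$-numerically trivial divisors to $g$-numerically trivial ones. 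Therefore $f_*$ induces a map $\NS(Y/U)_{W,\Q}\to\NS(X/U)_{W,\Q}$, which is surjective because $f_*$ is already surjective on $\Div(-)_\Q$ and $\Div(X)_\Q$ surjects onto $\NS(X/U)_{W,\Q}$; tensoring with $\R$ gives the surjection $f_*:\NS(Y/U)_W\twoheadrightarrow\NS(X/U)_W$.

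For the dual statement I would simply dualize: a surjection of finite-dimensional $\R$-vector spaces induces an injection of the dual spaces, and since $N_1(-/U)_W$ is by definition $\NS(-/U)_W^*$, this produces the inclusion $(f_*)^*:N_1(X/U)_W\hookrightarrow N_1(Y/U)_W$. There is no serious obstacle here beyond Proposition~\ref{prop:nef<=>,rel}, which is already available; the only step needing a moment of care is the well-definedness in the key point --- that pushforward preserves numerical triviality --- which is precisely what the relative criterion supplies. One could equally run the diagram~\eqref{eq:f*surj} used in the proof of Theorem~\ref{thm:NSWfg} with $\NS(Y/U)_W$ in place of $\NS(Y/U)_\R$; the argument above is just its concrete incarnation.
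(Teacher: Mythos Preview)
Your proposal is correct and follows essentially the same approach as the paper. The paper gives no separate proof of this relative corollary (it is listed among statements whose modification from the absolute case is clear), and its absolute proof of Corollary~\ref{cor:surjectionNSW} is precisely the diagram~\eqref{eq:f*surj} argument you invoke at the end; your explicit version --- $f_*$ is an isomorphism on $\Div(-)_\Q$ because $f$ is small, and it descends to the N\'eron--Severi quotients because Proposition~\ref{prop:nef<=>,rel} guarantees that $h$-numerically trivial divisors push forward to $g$-numerically trivial ones --- is exactly the content of that diagram unpacked.
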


\begin{cor}[cfr. \ref{lm:NE=sum}]\label{cor:NE=sum} If $X\rightarrow U$ is a projective morphism of normal quasi-projective varieties over an algebraically closed field of characteristic $0$, with $X$ having log terminal singularities, and $f:Y\rightarrow X$ is a small projective morphism over $U$ with $Y$ normal quasi-projective, then
$$
\Nef(Y/U)_W=\Nef(X/U)_W\cap\Nef(Y/X)_W
$$
and
$$
\NEbar(Y/U)_W=\NEbar(X/U)_W+\NEbar(Y/X)_W.
$$

Moreover, we have $\Int(\NEbar(X/U)_W)\subseteq\Int(\NEbar(Y/U)_W)$ and $\Int(\NEbar(Y/X)_W)\subseteq\Int(\NEbar(Y/U)_W)$.
\end{cor}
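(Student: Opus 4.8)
The plan is to repeat the proof of Lemma \ref{lm:NE=sum} essentially word for word, replacing each absolute ingredient by its relative counterpart. Write $g:X\to U$, $h=g\circ f$, and let $\pi:\NS(Y/U)_W\twoheadrightarrow\NS(Y/X)_W$ be the natural quotient and $f_*:\NS(Y/U)_W\twoheadrightarrow\NS(X/U)_W$ the pushforward, which is surjective by Corollary \ref{cor:surjectionNSW,rel}. First I would invoke Proposition \ref{prop:nef<=>,rel}: a Weil $\R$-divisor $D$ on $Y$ is $h$-nef if and only if $D$ is $f$-nef and $f_*D$ is $g$-nef. Phrased with cones, this says
$$
\Nef(Y/U)_W=\pi^{-1}\Nef(Y/X)_W\cap(f_*)^{-1}\Nef(X/U)_W ,
$$
and after identifying $\NS(X/U)_W$ and $\NS(Y/X)_W$ with the relevant quotients of $\NS(Y/U)_W$ this is exactly the first asserted identity $\Nef(Y/U)_W=\Nef(X/U)_W\cap\Nef(Y/X)_W$.

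Next I would check $\ker\pi\cap\ker f_*=\{0\}$, exactly as in Corollary \ref{cor:sumofNS}: applying Proposition \ref{prop:nef<=>,rel} to $D$ and to $-D$ shows $D$ is $h$-numerically trivial if and only if $f_*D$ is $g$-numerically trivial and $D$ is $f$-numerically trivial, so a class lying in both kernels vanishes. Since $\NS(Y/U)_W$ is finite dimensional (Theorem \ref{thm:NSWfg}) all the cones here are closed and $\NEbar(-)_W^*\cong\Nef(-)_W$; dualizing, $\pi$ and $f_*$ become the injections $\pi^*:N_1(Y/X)_W\hookrightarrow N_1(Y/U)_W$ and $(f_*)^*:N_1(X/U)_W\hookrightarrow N_1(Y/U)_W$, and $\ker\pi\cap\ker f_*=\{0\}$ dualizes to $N_1(X/U)_W+N_1(Y/X)_W=N_1(Y/U)_W$. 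Taking the polar dual of the displayed intersection of nef cones, and using the convex-cone fact that $(C_1\cap C_2)^*=C_1^*+C_2^*$ when the interiors of $C_1$ and $C_2$ meet — which they do here, since both $\pi^{-1}\Nef(Y/X)_W$ and $(f_*)^{-1}\Nef(X/U)_W$ have interior containing the nonempty open cone $\Amp(Y/U)_W$ — gives
$$
\NEbar(Y/U)_W=\NEbar(X/U)_W+\NEbar(Y/X)_W .
$$

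Finally, for the interiors I would dualize the ampleness half of Proposition \ref{prop:nef<=>,rel} in the same fashion: an $f$-ample Weil divisor $A$ on $Y$ with $f_*A$ $g$-ample is $h$-ample, which in cones reads $\Int(\Nef(Y/U)_W)=\Int(\Nef(X/U)_W)\cap\Int(\Nef(Y/X)_W)$, and dualizing yields $\Int(\NEbar(X/U)_W)\subseteq\Int(\NEbar(Y/U)_W)$ and, symmetrically, $\Int(\NEbar(Y/X)_W)\subseteq\Int(\NEbar(Y/U)_W)$ — precisely as the interiors were obtained in Lemma \ref{lm:NE=sum}. I do not expect a genuine obstacle: every substantive input (Proposition \ref{prop:nef<=>,rel}, Corollary \ref{cor:surjectionNSW,rel}, Theorem \ref{thm:NSWfg}) already holds in the relative setting, and the only care needed is bookkeeping the identifications $N_1(X/U)_W,\,N_1(Y/X)_W\subseteq N_1(Y/U)_W$ when moving between the nef-cone and the curve-cone pictures, together with the routine facts about dualizing intersections of closed convex cones.
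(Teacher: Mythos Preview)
Your proposal is correct and follows exactly the approach the paper intends: the paper's own proof consists of the single sentence ``The proof of this statement is pure convex geometry, so it translates without modifications in the relative setting,'' and what you have written is precisely that translation of Lemma \ref{lm:NE=sum} using the relative inputs (Proposition \ref{prop:nef<=>,rel}, Corollary \ref{cor:surjectionNSW,rel}, Theorem \ref{thm:NSWfg}). If anything, you supply more detail on the convex-duality step than the paper does in either the absolute or relative case.
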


\begin{proof} The proof of this statement is pure convex geometry, so it translates without modifications in the relative setting.
\end{proof}

\begin{thm}[Relative version of Kleiman's criterion, cfr. \ref{thm:kleimanWeil}]\label{thm:kleimanWeil,rel} Let $f:Y\rightarrow X$ be a projective  morphism between normal quasi-projective varieties over an algebraically closed field of characteristic $0$, with $Y$ with log terminal singularities. Let $\pi:\NS(Y)_W\rightarrow\NS(Y/X)_W$. Then $D$ on $Y$ is $f$-ample if and only if $\NEbar(Y/X)_W\setminus\{0\}\subseteq\pi(D_{>0})$.
\end{thm}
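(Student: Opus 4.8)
The plan is to reproduce the proof of the absolute Kleiman criterion, Theorem~\ref{thm:kleimanWeil}, replacing each global ingredient by its relative counterpart. Since relative $f$-nefness and $f$-ampleness are local on the base, and neither side of the asserted equivalence changes if we replace $X$ by an affine open, I would first assume $X$ affine. The structural input is that $\NS(Y/X)_W$ is finite dimensional: this is the relative form of Theorem~\ref{thm:NSWfg}, valid because $Y$ has log terminal singularities. Finite dimensionality, together with $\Nef(Y/X)_W=\overline{\Amp(Y/X)_W}$ being closed (relative form of \ref{lm:amplecone=interiornefcone}), yields the duality $\Nef(Y/X)_W=\NEbar(Y/X)_W^{*}$; hence a class is $f$-nef if and only if it pairs non-negatively with every element of $\NEbar(Y/X)_W$. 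This is exactly what makes the condition $\NEbar(Y/X)_W\setminus\{0\}\subseteq\pi(D_{>0})$ meaningful and the right reformulation of ``strictly $f$-positive''.

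With this in place I would run the five steps of the proof of Theorem~\ref{thm:kleimanWeil} almost verbatim. Fix an $f$-ample $\Q$-Cartier divisor $H$ on $Y$ (it exists since $f$ is projective). Using the relative \ref{lm:amplecone=interiornefcone}, together with additivity of relative nefness (\ref{lm:nefnessadditive}) and \ref{lm:ample+anything} to recombine divisors, $D$ is $f$-ample if and only if $D-\e H$ is $f$-nef for some $\e>0$, which by the duality above means $x(D)\geq\e\, x(H)$ for all $x\in\NEbar(Y/X)_W$. Fixing any norm on $N_1(Y/X)_W$, the inclusion $\NEbar(Y/X)_W\setminus\{0\}\subseteq\pi(D_{>0})$ says precisely that the linear functional $x\mapsto x(D)$ is positive on the compact slice $\NEbar(Y/X)_W\cap\{\|x\|=1\}$; compactness gives a uniform $\e$, and comparing $\|\cdot\|$ with a taxicab norm built from finitely many $f$-ample generators of $\NS(Y/X)_W$ turns this into $x(D)\geq\e'\,x(H)$ for all $x\in\NEbar(Y/X)_W$, i.e.\ $f$-ampleness. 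Conversely, if $D$ is $f$-ample, then for each nonzero $x\in\NEbar(Y/X)_W$ one picks $H_x\in\NS(Y/X)_W$ with $x(H_x)>0$ and applies \ref{lm:ample+anything} to get $b>0$ with $bD-H_x$ $f$-nef, so $0\leq x(bD-H_x)=b\,x(D)-x(H_x)$ and $x(D)>0$.

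The content of steps (3)--(5) is pure finite-dimensional convex geometry and carries over unchanged once finite dimensionality is in hand, so I do not expect a genuine obstacle: the relative statement is in fact softer than the absolute one, because we never have to reduce further to the projective case (in the absolute proof that reduction is absorbed into Theorem~\ref{thm:NSWfg} itself). The points that do deserve care are the bookkeeping ones, and this is where the ``hard'' (really, routine but essential) work lies: that $f$-nefness and $f$-ampleness descend to $\NS(Y/X)_W$ (so that the statement is well-posed), which uses additivity of relative nefness, \ref{lm:nefnessadditive}; that $\NEbar(Y/X)_W$ is a full-dimensional salient cone, true because the relative ample cone is open and nonempty while $D,-D$ both $f$-nef forces $D=0$ in $\NS(Y/X)_W$; and that the finite-generation input \cite[92]{kollarexercises} and Theorem~\ref{thm:NSWfg} are invoked in their correct relative forms over a (possibly non-projective) quasi-projective base.
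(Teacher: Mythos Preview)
Your proposal is correct and follows the same approach as the paper: the paper's own proof of Theorem~\ref{thm:kleimanWeil,rel} is the single sentence ``Also in this case the proof is solely convex geometry,'' i.e.\ one simply reruns the five steps of Theorem~\ref{thm:kleimanWeil} with the relative cones. Your write-up is more careful about the bookkeeping (affine reduction, finite dimensionality via the relative Theorem~\ref{thm:NSWfg}, well-posedness on $\NS(Y/X)_W$), but the strategy is identical.
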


\begin{proof} Also in this case the proof is solely convex geometry.
\end{proof}

\begin{thm}[cfr. \ref{thm:globgen}]\label{thm:globgen,del} Let $f:X\rightarrow U$ be a projective morphism of normal quasi-projective varieties, with $X$ log terminal singularities, over an algebraically closed field of characteristic $0$. Let $D$ be an $f$-nef Weil divisor such that $aD-K_X$ is $f$-nef and $f$-big for some $a>0$ (resp. for all $a>0$ sufficiently divisible). Then $D$ is $f$-agg.
\end{thm}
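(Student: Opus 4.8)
The plan is to mirror the proof of \ref{thm:globgen} essentially line by line, replacing each absolute positivity statement by its relative analogue and keeping careful track of the composite morphisms. First I would note that the conclusion is local on the base: relative global generation can be tested on an affine open cover of $U$, and the hypotheses ($f$-nef, $f$-big) restrict to preimages of affine opens of $U$. So I would reduce to the case $U=\Spec R$ affine, in which case $f$-agg simply means that $\regsh_X(mD)$ is globally generated for every sufficiently divisible $m$, and from there the argument would follow \ref{thm:globgen} step by step.

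For the $\Q$-Cartier case I would take the $\Q$-Cartierization $g:Z\to X$ of $-K_X$; since $D$ is $\Q$-Cartier and $f$-nef, $g$ is also the $\Q$-Cartierization of $aD-K_X$, and $Z$ has $\Q$-Gorenstein log terminal singularities by \ref{lm:QCart still lt}, so $(Z,0)$ is a klt pair. Writing $h:=f\circ g:Z\to U$, the relative form of \ref{thm:QCart positivity} together with \ref{prop:nef<=>,rel} (using that $g$ is small) gives that the $\Q$-Cartier divisor $ag^*D-K_Z=g^{-1}_*(aD-K_X)$ is $h$-nef and $h$-big and that $g^*D$ is $h$-nef. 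The relative basepoint-free theorem (the relative version of \cite[3.3]{komo}) applied to $h:Z\to U$ then shows $g^*D$ is $h$-semiample, hence $h$-agg, and the projection formula pushes this down to $D$ being $f$-agg.

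For the general case I would take the $\Q$-Cartierization $p:Y\to X$ of $D$, so that $Y$ has log terminal singularities (\ref{lm:QCart still lt}) and $D_Y:=p^{-1}_*D$ is $\Q$-Cartier and $p$-ample; set $h':=f\circ p:Y\to U$. Reproducing the corresponding step of \ref{thm:globgen}, I would show $aD_Y-K_Y$ is $h'$-nef for $a$ sufficiently divisible: for an $f$-ample Cartier divisor $A$ on $X$, global generation of $m(aD-K_X+A)$ over $U$, relative global generation of $m(aD_Y-K_Y)$ over $X$, and the smallness of $p$ would force the natural comparison map on $Y$ to be an isomorphism, so $aD_Y-K_Y+p^*A$ is $h'$-nef by \ref{prop:nef<=>,rel}; letting $A$ shrink and invoking closedness of relative nefness (\ref{lm:amplecone=interiornefcone}) gives that $aD_Y-K_Y$ is $h'$-nef, and by the relative version of \cite[3.9]{wStefano2} it is also $h'$-big. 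The $\Q$-Cartier case already proved, applied to $h':Y\to U$, then gives that $D_Y$ is $h'$-agg, and since $D_Y$ is $p$-ample this descends to $D$ being $f$-agg, exactly as at the end of \ref{thm:globgen}.

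The hard part will be purely organizational rather than conceptual: one must keep the tower $Y\to X\to U$ straight and make sure the relative $\Q$-Cartierization, the relative basepoint-free theorem, \ref{prop:nef<=>,rel}, relative Serre vanishing and the projection formula, and the closedness of relative nefness are each invoked for the correct morphism. I do not expect any genuinely new phenomenon beyond what already appears in the proof of \ref{thm:globgen} and in the relative statements collected in \Sref{sect:rel}.
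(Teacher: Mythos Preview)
Your proposal is correct and is precisely the relativization of the proof of \ref{thm:globgen} that the paper has in mind; indeed the paper's own proof consists of the single sentence ``The proof is essentially the same.'' Your careful tracking of the composites $Z\to X\to U$ and $Y\to X\to U$ and your use of \ref{prop:nef<=>,rel} and \ref{lm:amplecone=interiornefcone} at the right places is exactly what is needed.
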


\begin{proof} The proof is essentially the same.
\end{proof}

{\renewcommand{\labelenumi}{(\arabic{enumi})}
\begin{thm}[Relative Cone theorem, cfr. \ref{thm:cone}]  Let $X\rightarrow U$ be a projective morphism of normal quasi-projective varieties over an algebraically closed field of characteristic $0$, and let $X$ have log terminal singularities. Then
\begin{enumerate}
\item There are (countably many) $C_j\in N_1(X/U)_W$ such that
$$
\NEbar(X/U)_W=\NEbar(X/U)_{W,K_X\geq0}+\sum\R_{\geq0}\cdot C_j,
$$
and they do not accumulate on the half-space $K_{X<0}$.

\item (Relative contraction theorem) Let $F\subset\NEbar(X/U)_W$ be a $K_X$-negative extremal face. There exists a diagram
\begin{equation}
\xymatrix{& \Xtilde\ar[dr]^{\varphitilde_F} \ar[dl]_f & \\ X \ar@{-->}[rr]_{\varphi_F} \ar[dr] & & Y \ar[dl] \\ & U, &}
\end{equation}
where 
\begin{enumerate}
\item $f$ is small, projective, birational and $\rho(\Xtilde)\leq\rho(X)+1$;
\item under the inclusion $\NEbar(X/U)_W\hookrightarrow\NEbar(\Xtilde/U)_W$, $F$ is a rational $K_{\Xtilde}$-negative extremal face and $\varphitilde_F$ is a contraction of the face $F$;
\item $\Xtilde$ has log terminal singularities and $\varphitilde_{F *}\regsh_{\Xtilde}=\regsh_Y$.
\end{enumerate}\
\end{enumerate}
\end{thm}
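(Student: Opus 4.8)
The plan is to reduce both parts of the relative statement to the non-relative Cone theorem \ref{thm:cone}, mimicking its proof but replacing the usual Cone theorem, Kleiman's criterion and the global generation theorem by their relative analogues collected in this section (\ref{prop:nef<=>,rel}, \ref{cor:surjectionNSW,rel}, \ref{cor:NE=sum}, \ref{thm:kleimanWeil,rel}, \ref{thm:globgen,del}).

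For (1): let $f:Z\rightarrow X$ be a $\Q$-factorialization of $X$ over $U$ with $K_Z$ $f$-nef (it exists by \cite[109]{kollarexercises}); then $Z$ has log terminal singularities by \ref{lm:QCart still lt} and is $\Q$-factorial, so $\NEbar(Z/U)_W=\NEbar(Z/U)_\R$. By \ref{cor:NE=sum} we have $\NEbar(Z/U)_W=\NEbar(Z/X)_W+\NEbar(X/U)_W$, and since $K_Z$ is $f$-nef, $\NEbar(Z/X)_{W,K_Z<0}=\emptyset$; hence, under the identifications of \ref{cor:surjectionNSW,rel}, $\NEbar(Z/U)_{W,K_Z<0}=\NEbar(X/U)_{W,K_X<0}$. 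Applying the usual relative Cone theorem to the $\Q$-factorial klt variety $Z$ over $U$ produces countably many rays $\R_{\geq0}\cdot C_j$ spanning $\NEbar(Z/U)_{W,K_Z<0}$ and not accumulating on $K_{Z<0}$; pushing them forward to $X$ gives the stated decomposition.

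For (2): fix a $K_X$-negative extremal face $F\subset\NEbar(X/U)_W$. Choose a $g$-ample $H$ and $\e>0$ with $K_X+\e H$ negative on $F$, and — following \cite[\S3.3, Steps 6 and 7]{komo} with $\NS(X/U)_W$ in place of $\NS(X)_\R$ — produce a rational nef Weil divisor $D$ that is positive on the complementary closed cone $W_F$ and vanishes exactly on $F$. Then for $b\gg0$ the divisor $bD-K_X$ is positive on $\NEbar(X/U)_W\setminus\{0\}$, hence $g$-ample by \ref{thm:kleimanWeil,rel}, and therefore $D$ is $f$-agg (relatively over $U$) by \ref{thm:globgen,del}. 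Set $\Xtilde:=\Proj_X\Rscr(X,D)$: it is small over $X$, so $\rho(\Xtilde)\leq\rho(X)+1$ and $\Xtilde$ has log terminal singularities by \ref{lm:QCart still lt}, and $\Dtilde:=f^{-1}_*D$ is $\Q$-Cartier and $f$-ample. Since $D$ is relatively agg over $U$, the torsion-quotient argument of \ref{thm:cone} (using that $f$ is small, so $\regsh_{\Xtilde}\cdot\regsh_X(mD)\hookrightarrow\regsh_{\Xtilde}(m\Dtilde)$ is an isomorphism on global sections over every affine of $U$) shows $\regsh_{\Xtilde}(m\Dtilde)$ is relatively globally generated over $U$ for $m$ sufficiently divisible, so a high multiple of $\Dtilde$ defines a morphism over $U$; let $\varphitilde_F:\Xtilde\rightarrow Y$ be its Stein factorization and $\varphi_F:=\varphitilde_F\circ f^{-1}$.

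It remains to check the properties of the face. Under $\NEbar(X/U)_W\hookrightarrow\NEbar(\Xtilde/U)_W$ one has $x(D)=x(\Dtilde)$ and $x(K_X)=x(K_{\Xtilde})$, so $F$ is $K_{\Xtilde}$-negative; since $\Dtilde$ is $f$-ample, $\NEbar(\Xtilde/X)_W\setminus\{0\}\subseteq\Dtilde_{>0}$ by \ref{thm:kleimanWeil,rel}, and combining with $\NEbar(\Xtilde/U)_W=\NEbar(X/U)_W+\NEbar(\Xtilde/X)_W$ from \ref{cor:NE=sum} gives $\NEbar(\Xtilde/U)_W\cap\Dtilde_{=0}=\NEbar(X/U)_W\cap D_{=0}=F$, so $F$ is a rational extremal face and $\varphitilde_F$ contracts exactly $F$; finally $-K_{\Xtilde}$ is $\varphitilde_F$-ample (as in the absolute proof), whence $Y$ has log terminal singularities by \ref{prop:contr still lt,2} and $\varphitilde_{F*}\regsh_{\Xtilde}=\regsh_Y$ by Stein factorization. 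The main obstacle, exactly as in the absolute case, is guaranteeing that the $f$-agg divisor $\Dtilde$ genuinely defines a morphism over $U$ — that is, upgrading relative asymptotic global generation of $\Dtilde$ to relative global generation of an honest multiple — together with the bookkeeping that makes all the convex-geometric identifications of the relative Weil Néron–Severi spaces and cones under $f$ compatible; both, however, are supplied verbatim by the relative statements above.
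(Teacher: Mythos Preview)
Your proposal is correct and follows essentially the same approach as the paper, which simply records that the proof is the same as in the absolute case \ref{thm:cone}, with the relative analogues \ref{prop:nef<=>,rel}, \ref{cor:NE=sum}, \ref{thm:kleimanWeil,rel}, \ref{thm:globgen,del} substituted in. One small notational slip: in part (2) you write ``$D$ is $f$-agg'' before $f$ has been introduced (and $f$ is then used for $\Xtilde\to X$); you mean $D$ is agg relative to $U$, i.e.\ with respect to the structure morphism $g:X\to U$.
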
}

\begin{proof} In this case as well, the proof is essentially the same.
\end{proof}

\providecommand{\href}[2]{#2}

\end{document}